\theoremstyle{plain}
\newtheorem{theorem}{Theorem}
\newtheorem{lemma}[theorem]{Lemma}
\newtheorem{proposition}[theorem]{Proposition}
\newtheorem{conjecture}[theorem]{Conjecture}
\newtheorem{corollary}[theorem]{Corollary}
\theoremstyle{definition}
\theoremstyle{definition} 
\newtheorem*{remarks}{Remarks}
\renewcommand{\arraystretch}{1.2}     
\renewcommand{\geq}{\geqslant}
\renewcommand{\leq}{\leqslant}
\begin{document}

\newcommand\BigO{\textup{O}}
\newcommand{\C}{\textup{C}}
\newcommand{\diag}{\textup{diag}}
\newcommand{\eps}{\varepsilon}
\newcommand{\F}{\mathbb{F}}
\newcommand\GL{\textup{GL}}
\newcommand{\h}[1]{{\widehat{#1}}}
\newcommand\im{\textup{im}}
\newcommand\Irr{\textup{Irr}}
\newcommand\IsfCyclic{\textsc{Is\kern-1.1pt$f$\kern-0.5ptCyclic}}
\newcommand\IsfWitness{\textsc{Is\kern-1.1pt$f$\kern-0.5ptWitness}}
\renewcommand{\l}{\lambda}
\newcommand{\M}{\textup{M}}
\newcommand{\Magma}{\text{\sc Magma}}
\newcommand{\Mat}{\textup{Mat}}
\newcommand{\MeatAxe}{\text{\sc Meat-axe}}
\newcommand\n{\newline}
\newcommand{\N}{\mathbb{N}}
\newcommand{\norm}[1]{\parallel\kern-1pt #1\kern-1pt\parallel}
\newcommand{\properdivisors}{\genfrac{}{}{0pt}{}{d|n}{1<d<n}}
\newcommand\OO{\textup{O}}
\newcommand\oo{\textup{o}}
\newcommand\ord{\textup{ord}}
\newcommand\unc{\textup{unc}}
\newcommand\Unc{\textup{Unc}}
\newcommand\U{\textup{U}}
\newcommand{\type}{\textup{type}}
\newcommand{\Z}{\mathbb{Z}}

\hyphenation{Frob-enius}

\begin{center}\large\sffamily\mdseries
Towards an efficient \MeatAxe\ algorithm using $f$-cyclic matrices:\\
the density of uncyclic matrices in $\M(n,q)$
\end{center}

\title[\tiny\upshape\rmfamily The density of uncyclic matrices]{}


\vskip11mm
\begin{center}
  To John Cannon and Derek Holt in recognition\\
  of their distinguished contributions to mathematics,\\
  and in particular, to computation and the Magma system.
\end{center}

\author{{\sffamily S.\,P. Glasby and Cheryl E. Praeger}}

\address[Glasby]{
Department of Mathematics\\   
Central Washington University\\
WA 98926-7424, USA. {\tt http://www.cwu.edu/$\sim$glasbys/}}
\address[Praeger]
{School of Mathematics and Statistics\\
35 Stirling Highway\\
University of Western Australia\\
Crawley 6009, Australia. {\tt http://www.maths.uwa.edu.au/$\sim$praeger/}}

\begin{abstract}
An element $X$ in the algebra $\M(n,\F)$ of all $n\times n$ matrices over a
field $\F$ is said to be $f$-cyclic if the underlying vector space 
considered as an $\F[X]$-module has at least one cyclic primary component.
These are the matrices considered to be ``good'' in the Holt-Rees version 
of Norton's irreducibility test in the
\MeatAxe\ algorithm. We prove that, for any finite field $\F_q$, the 
proportion of matrices in $\M(n,\F_q)$ that are ``not good'' decays
exponentially 
to zero as the dimension $n$ approaches infinity. Turning this around, we
prove that the density of ``good'' matrices in $\M(n,\F_q)$ for the
\MeatAxe\ depends on the degree, showing that it is at least
$1-\frac2q(\frac{1}{q}+\frac{1}{q^2}+\frac{2}{q^3})^n$ for $q\geq4$.
We conjecture that the density is at least
$1-\frac1q(\frac{1}{q}+\frac{1}{2q^2})^n$ for all $q$ and $n$, and 
confirm this conjecture for dimensions $n\leq 37$.
Finally we give a one-sided Monte Carlo algorithm called \IsfCyclic\  to test
whether a matrix is ``good'', at a cost of $\OO(\Mat(n)\log n)$ field
operations, where $\Mat(n)$ is an upper bound for the number of field
operations required to multiply two matrices in $\M(n,\F_q)$.
\end{abstract}

\maketitle
\centerline{\noindent 2000 Mathematics subject classification:
 15A52, 20C40}

\section{Introduction}

The \MeatAxe\ is a fundamental tool in computational representation
theory, most often used to test irreducibility of a finite matrix
group or algebra, and in the case of reducibility to construct an
invariant subspace.  A number of versions have been described in the
literature, first by R.~Parker~\cite{Parker} in 1984 and 
later by others~\cite{IL, HR, colum}. The implementations of the
\MeatAxe\ in the computer algebra systems {\sf GAP} \cite{GAP} and Magma
\cite{Magma} are based on the version of D.\,F.~Holt and
S.~Rees in~\cite{HR}. The aim of this paper is to analyse the class of
matrices used by Holt and Rees in their version of S.\,P.~Norton's
irreducibility test~\cite[Section 2]{HR}. In the language of Holt and
Rees these are matrices whose characteristic polynomials have at least
one ``good'' irreducible factor. Following~\cite{Glasby} we call them
\emph{$f$-cyclic matrices}. They are those matrices $X$ over $\F$ for which the
underlying vector space, considered as an $\F[X]$-module, has at least
one cyclic primary component (see Section~\ref{S:conj} for a detailed
definition).

Proving that the ``$f$-cyclic irreducibility test'' is a Monte Carlo
algorithm requires a lower bound on the proportion of $f$-cyclic
matrices in an irreducible subalgebra of the algebra $\M(n,q)$ of
$n\times n$ matrices over a field of order $q$. Holt and Rees derive a
lower bound sufficient for their purposes by showing that at least a
non-zero constant fraction of the matrices in such irreducible
subalgebras have a ``good'' linear factor, (see \cite[pp. 7-8]{HR}
where a lower bound of $0.234$ is proved for all $n$ and $q$).

A variant of this irreducibility test using cyclic matrices was
introduced by P.\,M.~Neumann and the second author in \cite{colum},
and analysing it required a lower bound for the proportion of cyclic
matrices in irreducible subalgebras of $\M(n,q)$.  Explicit lower
bounds were obtained of the form $1-cq^{-3}$ for the full matrix
algebra $\M(n,q)$, and similar expressions for proper irreducible
subalgebras, see \cite[Theorems 4.1 and 5.5]{NP}. Precise limiting
proportions for large $n$ are also known, see \cite{Fulman, FNP, Wall}.

In 2006 the first author began a study of $f$-cyclic
matrices, which included both a simplified proof of the $f$-cyclic
irreducibility test and also a determination of the exact proportion
of $f$-cyclic matrices in $\M(n,q)$ for small $n$.  The results
for small $n$ suggested that the proportion of $f$-cyclic matrices in
$\M(n,q)$ may admit a lower bound $1-cq^{-d(n)}$ for some constant
$c$, where $d(n)$ increases with $n$. That is, the proportion
of ``non-$f$-cyclic'' matrices may be significantly smaller than the
proportion of non-cyclic matrices. Our wish to understand how
this proportion varies as $n$ increases motivated the present
investigation. While the proportion of non-cyclic matrices in
$\M(n,q)$ is known to lie between $\frac{1}{q^2(q+1)}$ and
$\frac{1}{(q^2-1)(q-1)}$ for all $n\geq2$ by \cite[Theorem 4.1]{NP},
it turns out that the proportion of non-$f$-cyclic matrices in
$\M(n,q)$ decays to zero exponentially as $n$ increases.

\begin{theorem}\label{T:simple}
There is a positive constant $c<1$ such that, for all finite field sizes~$q$,
and all dimensions $n\geq1$, the proportion of $f$-cyclic matrices
in $\M(n,q)$ is at least $1-c^n$.
\end{theorem}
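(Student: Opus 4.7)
The standard parameterisation of $\GL(n,q)$-conjugacy classes in $\M(n,q)$ assigns to each class a function $\phi\colon \Irr(\F_q[x])\to\{\text{partitions}\}$ with $\sum_{f}|\phi(f)|\deg f = n$; the class then has size $|\GL(n,q)|/\prod_f a_{\phi(f)}(q^{\deg f})$, where $a_\lambda(Q)$ denotes the centralizer order of a unipotent element of type $\lambda$ in $\GL(|\lambda|,Q)$. Under this parameterisation, $X$ is $f$-cyclic iff at least one $\phi(f)$ has a single part, so $X$ is non-$f$-cyclic iff every nonempty $\phi(f)$ has at least two parts.

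Because the count factorises across distinct irreducibles, a cycle-index style expansion yields
\[
\sum_{n\geq 0} \frac{|\{\text{non-}f\text{-cyclic } X \in \M(n,q)\}|}{|\GL(n,q)|}\, u^n \;=\; \prod_{d \geq 1} A_d(u)^{N_d(q)},
\]
where $N_d(q)$ is the number of monic irreducibles of degree $d$ over $\F_q$ and
\[
A_d(u) \;=\; 1 + \sum_{\lambda\text{ with }\geq 2\text{ parts}} \frac{u^{d|\lambda|}}{a_\lambda(q^d)}.
\]
Since $|\GL(n,q)|/|\M(n,q)| = \prod_{i=1}^n(1-q^{-i}) \leq 1$, it is enough to show that $[u^n]\prod_{d}A_d(u)^{N_d(q)}\leq c^n$ for an absolute $c<1$.

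To bound the coefficient I would use $(1+x)^N\leq e^{Nx}$ to dominate the product by $\exp\bigl(\sum_d N_d(q)\,(A_d(u)-1)\bigr)$ and then invoke two estimates: (i) $N_d(q)\leq q^d/d$; and (ii) for any partition $\lambda$ with at least two parts, the centralizer lower bound $a_\lambda(q^d)\geq C\cdot q^{d(|\lambda|+2)}$ with an absolute $C>0$ coming from $\prod_{j\geq1}(1-2^{-j})>0$. Estimate (ii) follows from the explicit formula $a_\lambda(Q)=Q^{|\lambda|+2n(\lambda)}\prod_i\prod_{j=1}^{m_i(\lambda)}(1-Q^{-j})$ together with the observation that $n(\lambda)\geq1$ whenever $\lambda$ has more than one part. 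Combining (i) and (ii) majorises $\sum_d N_d(q)\,(A_d(u)-1)$ by an analytic function whose power-series coefficients decay like $q^{-n}$ up to the subexponential factor $p(n)$, and a Cauchy estimate $[u^n]\exp(G)\leq \exp(G(r))\,r^{-n}$ at a suitable radius $r>1$ then delivers the bound $c^n$.

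The main obstacle is making $c$ uniform in $q$, and in particular dealing with $q=2$ (and to a lesser extent $q=3$). For large $q$ the crude estimates already force $c_q\to 0$, consistent with the sharper bound $\tfrac{2}{q}(\tfrac{1}{q}+\tfrac{1}{q^2}+\tfrac{2}{q^3})^n$ advertised in the abstract for $q\geq 4$. At $q=2$, however, the factor $(1-q^{-1})^{-1}=2$ inside the centralizer estimate together with the growth of the partition function $p(k)$ mean that the crude bound barely converges; the remedy is to treat the $d=1$ factor (and perhaps $d=2$) using exact centralizer arithmetic, leaving only the tail $d\geq 3$ to the rough bound above. Since only finitely many small $q$ need this individualised treatment, setting $c:=\max_q c_q$ produces a single absolute constant strictly less than $1$, as required.
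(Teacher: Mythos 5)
Your route is genuinely different from the paper's. The product formula you write down is exactly the paper's Proposition~\ref{P:prod} and (\ref{E:Nprod}), but the authors state explicitly that they were unable to extract upper bounds from it; instead they prove Theorem~\ref{T:simple} by induction on $n$ (Theorems~\ref{T:upperb} and~\ref{T:q=2}): they first bound the number $r(n,q)$ of uncyclic matrices whose characteristic polynomial is a power of a single irreducible (using Steinberg's count of unipotent elements), and for the remaining uncyclic $X$ they split $V=U\oplus W$ with $U$ a primary component of dimension $k\leq n/2$, over-count quadruples $(U,W,X_U,X_W)$, and close the recursion with $\rho(q)$ the root of a quadratic. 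Your alternative --- majorising $\prod_d A_d(r)^{N_d}$ by $\exp\bigl(\sum_d N_d(A_d(r)-1)\bigr)$ and applying a Cauchy estimate at $r=\theta q$ with $\tfrac12<\theta<1$ --- is a legitimate way to get the purely qualitative Theorem~\ref{T:simple}; it will not recover the sharp constants of Theorem~\ref{T:upperb}, but Theorem~\ref{T:simple} does not need them, and it avoids the delicate induction entirely.

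Two concrete gaps remain. First, your justification of the bound $a_\lambda(Q)\geq C\,Q^{|\lambda|+2}$ is wrong as stated: the product $\prod_i\prod_{j=1}^{m_i(\lambda)}(1-Q^{-j})$ is \emph{not} bounded below by $\prod_{j\geq1}(1-2^{-j})$, since it contains a factor $(1-Q^{-1})$ for each distinct part size of $\lambda$, and the number of distinct part sizes is unbounded. The inequality itself is true, but proving it requires trading the deficit $(1-Q^{-1})^{\ell}$ (where $\ell$ is the number of parts) against the surplus in the exponent, using $\norm{\lambda'}^2-|\lambda|\geq \ell(\ell-1)$; this yields, for instance, $C=\tfrac14$. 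Second, the Cauchy estimate delivers a bound of the form $M(q)\,(\theta q)^{-n}$ in which, at $q=2$, the constant $M(2)=\exp(G(r))$ produced by your crude estimates is large (of order $e^{10}$ for $\theta$ near $0.6$) while the decay rate $(\theta\cdot 2)^{-1}$ is close to $1$; converting $M\,(\theta q)^{-n}$ into $c^n$ for \emph{all} $n\geq1$, uniformly in $q$, therefore requires a separate argument for the first several dozen values of $n$. This is available --- every uncyclic matrix is non-cyclic, and by the Neumann--Praeger bound cited in the paper the proportion of non-cyclic matrices in $\M(n,q)$ is at most $\frac{1}{(q^2-1)(q-1)}\leq\frac13$ for $n\geq2$, so taking $c\geq 3^{-1/n_0}$ disposes of all $n\leq n_0$ --- but your proposal does not supply this step, and without it the constant in front of $r^{-n}$ cannot simply be absorbed. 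Your suggested refinement of the $d=1$ factor does shrink $M(2)$ substantially (the crude bound is very lossy for partitions far from $(m-1,1)$), but even then some explicit small-$n$ absorption is needed.
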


It follows from our proofs that the constant $c= 0.983$ suffices
for all $q$. Theorem~\ref{T:simple}
is proved with
$c=c(q)=\OO(q^{-1})$.  We study the class of matrices that are not
$f$-cyclic, that is to say, matrices $X\in\M(n,q)$ for which every
primary component of the underlying vector space $\F_q^n$, considered
as an $\F_q[X]$-module, is non-cyclic. We say that such matrices are
\emph{uncyclic}, and we denote by $\unc(n,q)$ the number of uncyclic
matrices in $\M(n,q)$.  A more precise version of our bounds
follows.

\begin{theorem}\label{T:uncyclic}
If $n\geq3$ and $q\geq4$, then
\[
   q^{-n-1}\left(1+\left(\frac{n-1}{2}\right)q^{-1}-q^{-3}\right)
   <\frac{\unc(n,q)}{q^{n^2}}
   < 2 q^{-1}\left(q^{-1}+q^{-2}+2q^{-3}\right)^n.
\]
The lower bound holds when $q=2,3$, and the following upper bounds hold
\[
  \frac{\unc(n,2)}{2^{n^2}}<(0.915)(0.983)^n\quad\text{and}\quad
  \frac{\unc(n,3)}{3^{n^2}}<(0.52)(0.53)^n.
\]
\end{theorem}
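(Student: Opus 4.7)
My plan is to classify each matrix $X\in\M(n,q)$ by its \emph{type}, namely the function $\mu$ from monic irreducible polynomials $\phi\in\F_q[t]$ to partitions, with $\sum_\phi\deg(\phi)\,|\mu(\phi)|=n$, recording the Jordan block structure of the $\phi$-primary component of $\F_q^n$ viewed as an $\F_q[X]$-module. Then $X$ is uncyclic precisely when every non-empty $\mu(\phi)$ has length at least $2$, and the number of matrices of type $\mu$ is $|\GL(n,q)|/\prod_\phi c_\phi(\mu(\phi))$, where the local centralizer factor is $c_\phi(\lambda)=q^{\deg(\phi)(|\lambda|+2n(\lambda))}\prod_i\varphi_{m_i(\lambda)}(q^{-\deg\phi})$ with $\varphi_k(x)=\prod_{j=1}^k(1-x^j)$ and $m_i(\lambda)$ the multiplicity of $i$ as a part of $\lambda$.

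For the lower bound I would first count matrices with a single eigenvalue $a\in\F_q$: these correspond via $X\mapsto X-aI$ to nilpotent matrices, and are uncyclic iff the Jordan partition has length $\geq 2$. By the Fine--Herstein theorem there are $q^{n^2-n}$ nilpotent matrices in $\M(n,q)$, of which the cyclic ones (partition $(n)$) number $q^{n^2-n}\prod_{i=2}^n(1-q^{-i})$. Summing over the $q$ eigenvalues contributes $q^{1-n}\bigl(1-\prod_{i=2}^n(1-q^{-i})\bigr)$ to $\unc(n,q)/q^{n^2}$. Expanding the product via inclusion--exclusion and keeping the leading terms should recover the main content of the claimed lower bound; should this estimate prove insufficient (which numerical checks suggest can happen for small $q$ and moderate $n$), I would supplement with the positive contribution of matrices having two distinct linear eigenvalues each with a non-cyclic primary component, which adds a further term of order $q^{-n-2}$.

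For the upper bound I would use the multiplicative generating function
\[
\sum_{n\geq 0}\frac{\unc(n,q)}{|\GL(n,q)|}\,u^n \;=\; \prod_{\phi\in\Irr(q)} T_\phi^{\unc}(u),\qquad T_\phi^{\unc}(u)\;=\!\!\sum_{\lambda\,:\,\ell(\lambda)\neq 1}\!\!\frac{u^{\deg(\phi)|\lambda|}}{c_\phi(\lambda)},
\]
which factors because the uncyclic condition is imposed separately on each primary component. For $\phi$ of degree $d$ the dominant correction $T_\phi^{\unc}(u)-1$ comes from the partition $(1,1)$ and equals $u^{2d}/\bigl(q^{2d}(1-q^{-d})(1-q^{-2d})\bigr)$, with further partitions contributing $\OO(u^{3d}/q^{3d})$. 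Taking logarithms, using $|\Irr_d(q)|\leq q^d/d$ to control the product over $\phi$, multiplying by $\prod_{i=1}^n(1-q^{-i})$ to convert $|\GL(n,q)|$ back to $q^{n^2}$, and extracting the coefficient of $u^n$, I would aim for the base $q^{-1}+q^{-2}+2q^{-3}$ raised to the $n$-th power, with the prefactor $2q^{-1}$ absorbing subdominant contributions from irreducibles of degree $\geq 2$.

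The principal obstacle is constant control: the $\OO(q^{-2d})$ deviations from irreducibles of degree $d$ must be summed against the $\OO(q^d/d)$ count of such irreducibles without inflating past the factor $2q^{-1}(q^{-1}+q^{-2}+2q^{-3})^n$, and coefficient extraction from the infinite product has to preserve a clean geometric form. For $q=2,3$ the generic estimate degrades, so I anticipate a hybrid argument: direct computation for small $n$ combined with a sharpened asymptotic tail bound for large $n$, exploiting the dominance of the $(1,1)$ partition contribution over linear factors to recover the explicit constants $(0.915)(0.983)^n$ and $(0.52)(0.53)^n$.
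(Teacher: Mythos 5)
Your setup (types, centralizer orders, the product form of the generating function) matches the paper's Sections 2--3, and your lower-bound strategy is essentially the paper's: count uncyclic matrices with a single eigenvalue via the nilpotent/unipotent count $q^{n^2-n}$, then add types with two distinct eigenvalues. But you have misjudged where the bound comes from. The single-eigenvalue count contributes $c_0(n,q)q^{-n-1}$ with $c_0(n,q)=q^2\bigl(1-\prod_{i=2}^n(1-q^{-i})\bigr)<1+q^{-1}+q^{-2}$ for every $n$, so it can never produce the coefficient $1+\frac{n-1}{2}q^{-1}$ once $n\geq4$; the two-eigenvalue contribution is not an optional patch for ``small $q$ and moderate $n$'' but the sole source of the linear-in-$n$ term, and it is of order $nq^{-n-2}$, not $q^{-n-2}$ as you state. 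The paper obtains it by enumerating the $\Theta(n)$ types $(t-\alpha)^{(\lambda_1,1)}(t-\beta)^{(\mu_1,1)}$ with $\lambda_1+\mu_1=n-2$, each conjugacy class having size roughly $q^{n^2-n-4}$, with $q(q-1)$ choices of $(\alpha,\beta)$. Once you carry out that count the lower bound does follow, so this part is a fixable imprecision rather than a wrong approach.

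The upper bound is where the real gap lies. You propose to extract the coefficient of $u^n$ from the infinite product $\prod_\phi T_\phi^{\unc}(u)$ by taking logarithms and controlling the tail over irreducibles; the paper derives exactly this product (Proposition~\ref{P:prod}) and states explicitly that it could not be used to prove upper or lower bounds. The coefficient of $u^n$ is a sum over all decompositions $n=\sum d(f)|\lambda(f)|$ with every $\lambda(f)$ having at least two parts, and nothing in your sketch converts that sum into a bound of the geometric form $2q^{-1}(q^{-1}+q^{-2}+2q^{-3})^n$ --- the step you yourself flag as ``the principal obstacle'' is precisely the missing proof. (You also have a slip: the $(1,1)$ term for a degree-$d$ irreducible is $u^{2d}/\bigl(q^{4d}(1-q^{-d})(1-q^{-2d})\bigr)$, not $u^{2d}/q^{2d}(\cdots)$, as your own exponent $d(|\lambda|+2n(\lambda))$ equals $4d$ here.) The paper's route is entirely different: for an uncyclic $X$ whose characteristic polynomial is not a power of one irreducible, it splits $V=U\oplus W$ with $U$ a primary component of dimension $k\leq n/2$, over-counts the 4-tuples $(U,W,X_U,X_W)$ to get the recursion $\unc(n,q)\leq r(n,q)+\sum_{k=2}^{\lfloor n/2\rfloor}\frac{|\GL(n,q)|}{|\GL(k,q)|\,|\GL(n-k,q)|}\,r(k,q)\,\unc(n-k,q)$, bounds $r(k,q)\leq c^*(q)q^{k^2-k-1}$, and closes the induction with $\rho(q)$ chosen as the root of a quadratic. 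The $q=2,3$ constants come from the same induction with sharper values of $c^*(2)$, $c^*(3)$ and computationally verified base cases, not from a separate asymptotic tail bound. Without a concrete mechanism for the coefficient extraction, your upper-bound argument does not go through.
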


The upper bounds for this theorem are proved using induction on~$n$,
see Theorems~\ref{T:upperb} and~\ref{T:q=2}. Theorem~\ref{T:upperb} involves
a slightly smaller, but more elaborate, function $c^*(q)$ in place of
the constant~2, see Lemma~\ref{L:rnq}.
Our proof of the lower bound in Theorem~\ref{T:uncyclic} 
is constructive and works for all $q$, see Theorem~\ref{T:lowerb}.
We believe that the true value of
$\unc(n,q)/q^{n^2}$ is closer to the lower bound than the upper bound given in
Theorem~\ref{T:uncyclic}, and we make the following conjecture.

\begin{conjecture}\label{C} If $q\geq2$ and $n\geq1$, then\quad
$\displaystyle\frac{\unc(n,q)}{q^{n^2}}
\leq \frac{1}{q}\left(\frac{1}{q}+\frac{1}{2q^2} \right)^n.$
\end{conjecture}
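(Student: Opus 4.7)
The plan is to combine the stated computational verification of Conjecture~\ref{C} for $n\leq 37$ with an inductive argument for $n\geq 38$. The induction must however be considerably tighter than the one underlying Theorem~\ref{T:uncyclic}, since the ratio of the two upper bounds is of order $(1+\Theta(q^{-1}))^n$, which grows exponentially in~$n$; a constant-factor improvement of the existing argument cannot suffice, and a genuinely new ingredient is required.

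The starting point is the exact count
$$\unc(n,q)\ =\ \sum_{(\lambda_f)}\frac{|\GL(n,q)|}{\prod_f c_{\lambda_f}(q^{d_f})},$$
in which the sum ranges over tuples of partitions indexed by monic irreducible polynomials $f\in\F_q[x]$ of degree $d_f$, subject to $\sum_f d_f|\lambda_f|=n$ and each \emph{nonempty} partition $\lambda_f$ having at least two parts, while $c_\lambda(q)$ is the standard centralizer polynomial. This may be encoded as the generating identity
$$\sum_{n\geq 0}\frac{\unc(n,q)}{|\GL(n,q)|}\,t^n\ =\ \prod_{f\text{ monic irred.}}\Bigl(1\,+\sum_{\ell(\lambda)\geq 2}\frac{t^{d_f|\lambda|}}{c_\lambda(q^{d_f})}\Bigr),$$
reducing the problem to an analytic inequality on a product of power series. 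The dominant contribution per unit of degree comes from partitions of shape $(k,1)$ at linear $f$, accounting for a factor of order $t/q$; a subdominant correction of order $t/(2q^2)$ arises from two-part partitions $(k,2)$ together with three-part partitions $(k,1,1)$. This is precisely the origin of the base $\tfrac{1}{q}+\tfrac{1}{2q^2}$ appearing in the conjecture.

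For the inductive step I would condition on the partition type $\lambda_{f_0}$ at the irreducible factor $f_0$ of smallest degree dividing the characteristic polynomial of $X$, writing
$$\unc(n,q)\ =\ \sum_{f_0,\,\lambda_{f_0}}N(f_0,\lambda_{f_0})\cdot\unc\bigl(n-d_{f_0}|\lambda_{f_0}|,\,q\bigr),$$
where $N(f_0,\lambda_{f_0})$ counts the number of ways to extend the chosen primary datum to a matrix on $\F_q^n$ whose remaining primary components are themselves uncyclic. Applying the inductive hypothesis to each smaller $\unc$ term, extracting the $(k,1)$-at-linear-$f_0$ leading contribution exactly, and bounding the remaining partition types term-by-term, one aims to recover the conjectured bound $\tfrac{1}{q}\bigl(\tfrac{1}{q}+\tfrac{1}{2q^2}\bigr)^n$ for all $n\geq 38$.

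The main obstacle is proving the sharp constant $\tfrac{1}{2q^2}$ in the second-order term. A crude tail bound on the inner partition sum yields something like $\tfrac{1}{q^2}+\OO(q^{-3})$---precisely the wasteful estimate driving Theorem~\ref{T:uncyclic}---and recovering the factor $\tfrac{1}{2}$ will require either exact cancellation within the product-over-$f$ structure or a delicate term-by-term comparison exploiting both the $(k,2)$ and $(k,1,1)$ centralizer formulas. The tightest case is $q=2$, where the conjectured base $\tfrac{5}{8}$ is far below the $0.983$ of Theorem~\ref{T:uncyclic}; there even the linear-$(k,1)$ contribution is not negligible compared with the target bound, so one likely needs both the inductive hypothesis and the exact evaluation of the linear-$f$ generating series to close the argument.
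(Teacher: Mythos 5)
You are addressing Conjecture~\ref{C}: the paper does not prove it, and neither does your proposal. The paper's only result in its direction is Proposition~\ref{P:conjbound}, a computer verification for $1\leq n\leq 37$ obtained by expanding the infinite product of Proposition~\ref{P:prod} to get the exact polynomials $\unc(n,q)$ and then checking nonnegativity of the difference polynomials via Lemma~\ref{L:positive}; for general $n$ the paper establishes only the much weaker bounds of Theorems~\ref{T:upperb} and~\ref{T:q=2}. Your proposal correctly diagnoses why the paper's induction cannot simply be tightened (the ratio of the two upper bounds grows exponentially in $n$) and correctly identifies the generating-function identity, which is exactly (\ref{E:prod}). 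But the inductive step for $n\geq 38$ is never carried out: you name the obstacle (obtaining the constant $\frac{1}{2q^2}$ rather than $q^{-2}+\OO(q^{-3})$ in the second-order term), offer two possible routes around it, and execute neither. For $q=2$ you concede that even the leading linear-factor contribution is not controlled relative to the target $\frac12\left(\frac58\right)^n$. A strategy with its central difficulty explicitly left open is not a proof.

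Two further concrete problems. First, the displayed recursion $\unc(n,q)=\sum N(f_0,\lambda_{f_0})\,\unc(n-d_{f_0}|\lambda_{f_0}|,q)$ is not an identity: the restriction of $X$ to the complementary summand must have characteristic polynomial coprime to $f_0$ (and, if $f_0$ is chosen of minimal degree, avoid all irreducibles of smaller degree), so the right-hand side overcounts. The paper's proof of Theorem~\ref{T:upperb} uses exactly this decomposition, but only as an inequality, and explicitly flags this over-estimation; it is precisely this slack that produces the loss you are trying to eliminate, so you cannot both present the decomposition as exact and identify its wastefulness as the thing to be repaired. Second, even granting the base cases $n\leq 37$ from Proposition~\ref{P:conjbound}, an induction that loses any fixed multiplicative factor per step fails for all sufficiently large $n$; hence ``bounding the remaining partition types term-by-term'' must be done with zero net loss, and no mechanism for achieving that is supplied. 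As it stands the proposal is a reasonable research plan toward the conjecture, not a proof of it.
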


A different approach to estimating $\unc(n,q)$ is to study a
probabilistic generating function for these quantities, for fixed $q$.
We introduce such a generating function in Section~\ref{S:cyc}, obtain an
infinite product expansion for it in Proposition~\ref{P:prod}, and use
it to compute the exact values of $\unc(n,q)$ as polynomials in $q$, for
small $n$. These expressions are given in Table~\ref{TableUnc} for $n\leq7$,
and are listed in an electronic database for $n\leq37$,
see~\cite[Appendix~1]{http}.
This approach enables us to verify Conjecture~\ref{C} for
$1\leq n\leq 37$, see Proposition~\ref{P:conjbound} and~\cite[Appendix~2]{http}.

These, to us, surprising results raise the question of whether the
improved bounds for the proportion of $f$-cyclic matrices might lead to
improvements in the \MeatAxe\ algorithm. This is a matter of ongoing
work of the authors, see \cite{GNP}. We have resolved the first issue of
whether the property of $f$-cyclicity can be identified efficiently. In
Section~\ref{S:witness} we give a Monte Carlo algorithm
that tests whether a given matrix
$X$ in $\M(n,q)$ is $f$-cyclic, and if so constructs a generator of
(possibly a direct sum of) cyclic primary summands of the underlying
space considered as an $\F_q[X]$-module. The algorithm requires
$\OO(\Mat(n)\log n)$ field operations, where $\Mat(n)$ is an upper bound
for the number of field operations required to multiply two matrices in
$\M(n,q)$, and the construction of a constant number (depending on the
desired failure probability) of random vectors in $\F_q^n$. For a
precise statement see Theorem~\ref{T:cost}.

Section~\ref{S:conj} gives a (known) formula for the size $|X^{\GL(n,q)}|$
of the $\GL(n,q)$-orbit containing $X\in\M(n,q)$ (with $\GL(n,q)$ acting by 
conjugation). The formula depends on the
Frobenius canonical form of $X$
which, in turn, depends on certain partitions. We define notation,
and introduce an invariant of the $\GL(n,q)$-orbit called the {\it type} of $X$.
In Section~\ref{S:cyc} the generating function
$\sum_{n\geq0}\frac{\unc(n,q)}{|\GL(n,q)|}u^n$ is expressed as an infinite product.
The infinite product gives rise to a formula for $\unc(n,q)$ involving
sums over certain partitions of rational functions in $q$. It not obvious from
the formula that $\unc(n,q)$ is a polynomial in $q$ with integer coefficients.
Although the formula is explicit, we were unable to use it to prove upper bounds
or lower bounds for $\unc(n,q)$. In Section~\ref{S:lowerb} we show that
$\unc(n,q)$ is at least
$q^{n^2-n-1}+\frac{n}{2}q^{n^2-n-2}+\OO(q^{n^2-n-3})$
by counting the number of matrices in certain large classes
of uncyclic matrices. Finding upper bounds in Section~\ref{S:upperb} (for
$q>2$) and in Section~\ref{S:2upperb} (for $q=2$) involved a rather
sensitive mathematical induction. The final Section~\ref{S:witness} gives a
practical Monte Carlo $\OO(\Mat(n)\log n)$ algorithm to test whether a
given matrix $X$ is $f$-cyclic relative to some irreducible divisor of $c_X(t)$.
This algorithm avoids the expensive step of evaluating a divisor of
$c_X(t)$ at $X$. Moreover, it outputs a (witness) vector~$u$ which
can be used when applying Norton's irreducibility test~\cite[Section 2.1]{HR}.

\section{Conjugacy Classes in $\GL(n,q)$}\label{S:conj}

A partition of $n\in\N:=\{0,1,2,\dots\}$, written
$\lambda\vdash n$, is an unordered sum $n=\sum_{i\geq1}\lambda_i$
where the parts $\lambda_i$ lie in $\N$. A partition can be represented
by (a)~its parts, (b)~its Young (or Ferrers) diagram \cite{StanleyII}, or
(c)~by the multiplicities of its parts. We write
$\lambda=(\lambda_1,\lambda_2,\dots)$ where $\lambda_1\geq\lambda_2\geq\cdots$
and $n=\sum_{i\geq1}\lambda_i$. Set $|\lambda|:=\sum_{i\geq1}\lambda_i$.
It is convenient to abbreviate a partition by omitting all (or some) of the
trailing zeroes. We shall commonly write
$\lambda=(\lambda_1,\lambda_2,\dots,\lambda_k)$ where
$\lambda_1\geq\lambda_2\geq\cdots\lambda_k>0$ and
$\lambda_{k+1}=\lambda_{k+2}=\cdots=0$. The empty partition,
or partition of zero, is written $(0,0,\dots)$ or simply $()$.

The Young diagram of $\lambda=(\lambda_1,\dots,\lambda_k)$ is a rectangular 
array of $|\lambda|$ boxes arranged in $k$ left-justified rows, with 
$\lambda_i$ boxes in row $i$, for each $i$. For example, Figure~\ref{F:Young}
shows the Young diagrams for the partitions $\lambda=(5,3,3,1)$ and
$\mu=(4,3,3,1,1)$ of $n=12$.
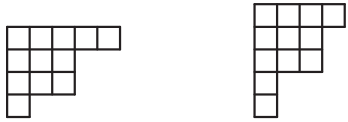
\begin{figure}[!ht]
  \setlength{\unitlength}{3mm}
  \begin{picture}(5,4)
    \thicklines
    \drawline(0,0)(0,4)\drawline(1,0)(1,4)\drawline(2,1)(2,4)\drawline(3,1)(3,4)
    \drawline(4,3)(4,4)\drawline(5,3)(5,4)
    \drawline(0,0)(1,0)\drawline(0,1)(3,1)\drawline(0,2)(3,2)\drawline(0,3)(5,3)
    \drawline(0,4)(5,4)
  \end{picture}
  \qquad\qquad
  \begin{picture}(4,5)
    \thicklines
    \drawline(0,0)(0,5)\drawline(1,0)(1,5)\drawline(2,2)(2,5)\drawline(3,2)(3,5)
    \drawline(4,4)(4,5)
    \drawline(0,0)(1,0)\drawline(0,1)(1,1)\drawline(0,2)(3,2)\drawline(0,3)(3,3)
    \drawline(0,4)(4,4)\drawline(0,5)(4,5)
  \end{picture}
  \caption{Young diagrams for $\lambda=(5,3,3,1)$ (left) and
  $\mu=(4,3,3,1,1)$ (right).}\label{F:Young}
\end{figure}
\vskip-2mm 

By interchanging the rows and columns of the Young diagram of $\lambda$,
we obtain the Young diagram of another partition,
called the {\it conjugate} partition, and denoted $\lambda'$. For example, 
in Figure~\ref{F:Young}, $\lambda'=\mu$ and $\mu'=\lambda$. The number
of parts of $\lambda$ equal to $i$, that is to say, the multiplicity of $i$, is denoted
$m_i(\lambda)$ or simply $m_i$. We occasionally write
$\lambda=1^{m_1}2^{m_2}3^{m_3}\cdots$. The number of non-zero parts of $\lambda$,
written $\ell_1(\lambda)$, is the number of squares in the first
column of the Young diagram of $\lambda$. More generally,
$\ell_i(\lambda)$ denotes the number of squares in the first $i$
columns of the Young diagram of $\lambda$.

The vector $m(\lambda)=(m_1(\lambda),m_2(\lambda),\dots)$ in $\N^\infty$
need not be a partition because the coordinates need not satisfy
$m_i(\lambda)\geq m_{i+1}(\lambda)$ for $i\geq1$.
Denote by $m(\lambda)^\oo$ the partition obtained from $m(\lambda)$ by permuting
the coordinates so that they are weakly decreasing. The formula for the order
$|\C_{\GL(n,q)}(X)|$ of the
centralizer of an element $X\in\M(n,q)$ involves three vectors:
$m(\lambda)$, $\ell(\lambda):=(\ell_1(\lambda),\ell_2(\lambda),\dots)$,
and $e(\lambda):=(m(\lambda)^\oo)'$, for various partitions $\lambda$, see
(\ref{E:cent}) and (\ref{E:cq}) below. As an example, if $\lambda=(5,3,3,1)$, then
$\lambda'=(4,3,3,1,1)$, and
\[
  m(\lambda)=(1,0,2,0,1,0,\dots),\quad
  \ell(\lambda)=(4,7,10,11,12,12,\dots),\quad
  e(\lambda)=(3,1,0,\dots).
\]
The reader should not confuse the vector $e(\lambda)$ with the
symmetric polynomial $e_\lambda$ defined in ~\cite[p.~290]{StanleyII}.
It is convenient to define the dot product $x\cdot y:=\sum_{i\geq1}x_iy_i$ of
vectors $x,y\in\N^\infty$ in the case that the sum is finite, for example, when
$x$ or $y$ has finite support. Also define $\norm{x}^2:=x\cdot x$.

\begin{lemma}\label{L1} Let $\lambda$ be a partition of $|\lambda|$. Then
\begin{enumerate}
\item[(a)] $|\lambda|=\sum_{i\geq1}i\,m_i(\lambda)=|\lambda'|$,
\item[(b)] $m_i(\lambda)=\lambda'_i-\lambda'_{i+1}$,
\item[(c)] $\ell_i(\lambda)=\lambda'_1+\cdots+\lambda'_i=\left(\sum_{k<i}
  km_k(\lambda)\right)+i\left(\sum_{k\geq i}m_k(\lambda)\right)$,
\item[(d)] $m(\lambda)\cdot\ell(\lambda)
  =\norm{\lambda'}^2\equiv |\lambda|\pmod2$,
\item[(e)] $e_k(\lambda)=|\{i\mid m_i(\lambda)\geq k\}|$,
\item[(f)] $\norm{\lambda'}^2\geq|\lambda|$ with equality if and only if
  $\lambda=(|\lambda|,0,0,\dots)$,
\item[(g)] $|e(\lambda)|=\sum_{i\geq1}m_i(\lambda)=\lambda'_1$. 
\end{enumerate}
\end{lemma}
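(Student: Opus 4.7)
The plan is to handle each of (a)--(g) by appealing to the Young diagram interpretation of $\lambda$, $\lambda'$, $m(\lambda)$, $\ell(\lambda)$, and $e(\lambda)$; most parts are immediate bookkeeping from the definitions, so I would present them briskly.

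For (a), I would double-count the boxes of the Young diagram of $\lambda$: grouping by the value of the part contributing a row gives $\sum_{i\geq 1} i\,m_i(\lambda)$, and transposing shows that $\lambda'$ has the same total. For (b), note that $\lambda'_i$ equals the number of parts of $\lambda$ of size at least $i$ (the number of boxes in column $i$), so $\lambda'_i-\lambda'_{i+1}$ counts parts of exact size $i$, namely $m_i(\lambda)$. For (c), the first $i$ columns of the diagram of $\lambda$ contain $\lambda'_1+\cdots+\lambda'_i$ boxes; alternatively, each row of length $k<i$ contributes $k\,m_k(\lambda)$ boxes and each row of length $k\geq i$ contributes exactly $i$ boxes, yielding the second formula. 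Part (g) is similar: $e(\lambda)$ is the conjugate of $m(\lambda)^{\oo}$, so $|e(\lambda)|=|m(\lambda)^{\oo}|=\sum_{i\geq 1}m_i(\lambda)$, which is the total number of (nonzero) parts of $\lambda$, i.e.\ $\lambda'_1$. For (e), since $m(\lambda)^{\oo}$ lists the multiplicities $m_i(\lambda)$ in weakly decreasing order, the $k$-th part of $(m(\lambda)^{\oo})'$ counts indices $i$ with $m_i(\lambda)\geq k$.

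The two parts needing a short computation are (d) and (f). For (d), I would substitute (b) and (c):
\[
  m(\lambda)\cdot\ell(\lambda)
  =\sum_{i\geq 1}(\lambda'_i-\lambda'_{i+1})(\lambda'_1+\cdots+\lambda'_i),
\]
and collapse this telescoping-style sum (Abel summation) to $\sum_{i\geq 1}(\lambda'_i)^2=\norm{\lambda'}^2$. The congruence mod $2$ then follows from $(\lambda'_i)^2\equiv\lambda'_i\pmod 2$ combined with $|\lambda'|=|\lambda|$ from (a). For (f), I would use $x^2\geq x$ for $x\in\N$, with equality precisely when $x\in\{0,1\}$; summing over $i$ gives $\norm{\lambda'}^2\geq |\lambda'|=|\lambda|$, with equality iff every $\lambda'_i\in\{0,1\}$, equivalently $\lambda'=(1^{|\lambda|})$, i.e.\ $\lambda=(|\lambda|,0,0,\dots)$.

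I do not expect any real obstacle here: the whole lemma is a collection of standard identities whose proofs reduce to reading off rows, columns, and multiplicities from a Young diagram, together with the one-line Abel-summation identity needed for (d). The only mild care is in making sure to reference (a)--(c) when proving (d) and (f), and in correctly interpreting the definition $e(\lambda)=(m(\lambda)^{\oo})'$ when deriving (e) and (g).
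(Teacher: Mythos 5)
Your proposal is correct and follows essentially the same route as the paper: parts (a)--(c), (e), (g) by reading off rows, columns and multiplicities of the Young diagram, part (d) by substituting $m_i(\lambda)=\lambda'_i-\lambda'_{i+1}$ and telescoping to $\norm{\lambda'}^2$, and part (f) from $x^2\geq x$ with equality iff $x\in\{0,1\}$. No gaps.
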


\begin{proof} The proofs of parts (a),(b) are elementary, see
\cite[p.~287]{StanleyII}. Counting the squares in the first $i$ columns 
of the Young diagram for $\lambda$ by columns gives the first formula
for $\ell_i(\lambda)$ in part (c), while counting by rows gives
the second. Consider part~(d):
\begin{align*}
  m(\lambda)\cdot\ell(\lambda)
  &=\sum_{i\geq1}m_i(\lambda)\ell_i(\lambda)\\
  &=(\lambda'_1-\lambda'_2)\ell_1(\lambda)
    +\sum_{i\geq2}(\lambda'_i-\lambda'_{i+1})
    \ell_i(\lambda)&&\quad\text{by part~(b)}\\
  &=(\lambda'_1-\lambda'_2)\lambda'_1
    +\sum_{i\geq2}\left(\lambda'_i\ell_{i-1}(\lambda)+(\lambda'_i)^2
    -\lambda'_{i+1}\ell_i(\lambda)\right) 
    &&\quad\text{as $\ell_i(\lambda)=\ell_{i-1}(\lambda)+\lambda'_i$}\\
  &=-\lambda'_2\lambda'_1+\lambda'_2\lambda'_1+\sum_{i\geq1}(\lambda'_i)^2
    =\norm{\lambda'}^2.
\end{align*}
However, $(\lambda'_i)^2\equiv\lambda'_i\pmod 2$ and so
$\norm{\lambda'}^2\equiv\sum_{i\geq1}\lambda'_i\pmod 2$. Part~(d)
now follows as $|\lambda'|=|\lambda|$.
Part~(e) follows from the elementary fact
$\lambda'_k=|\{i\mid \lambda_i\geq k\}|$, while part~(f) follows from
(d) and the observation that
${\lambda'_i}^2\geq\lambda'_i$ with equality if and only if $\lambda'_i=0,1$.
Finally, part~(g) follows as $\sum_{i\geq1}m_i(\lambda)$ and $\lambda'_1$
both count the number of rows in the Young diagram of $\lambda$, and
$e(\lambda)=(m(\lambda)^\oo)'$ so
$\sum_{i\geq1}e_i(\lambda)=\sum_{i\geq1}m_i(\lambda)$.
\end{proof}

Recall that $\M(n,q)$ is the algebra of $n\times n$ matrices over $\F_q$,
and let $G=\GL(n,q)$ denote the general linear group, its group of units.
A formula for the size $|X^G|$ of the $G$-orbit of a matrix
$X\in\M(n,q)$ dates back at least to~\cite{Kung,Stong}. Our formula
is better suited for calculation.
Clearly, $|X^G|=|G:C_G(X)|$ and the
structure of the centralizer $C_G(X)$ of $X$ depends on the Frobenius
(or rational) canonical form of $X$. 
Suppose that the characteristic polynomial $c_X(t)$ factors as
$\prod_f f^{\nu(f)}$ where the product is over monic irreducible
polynomials $f(t)\in\F_q[t]$, and $\nu(f)\in\N$ (possibly $\nu(f)=0$). The structure of
$C_G(X)$ depends on partitions $\lambda(f,X)$ of $\nu(f)$
which we abbreviate $\lambda(f)$ when the dependence on $X$ is clear,
see~\cite{Kung,Stong}.
The vector space $V=\F_q^{1\times n}$ is an $\F_q[X]$-module, and
$V(f)=\ker f^{\nu(f)}(X)=\ker f(X)^{\lambda(f)_1}$ is its $f$-primary component.
Let $X(f)$ denote the restriction of $X$ to $V(f)$. 
Thus the minimal polynomial of $X(f)$ is $f^{\lambda(f)_1}$, and that of
$X$ is $m_X(t)=\prod_f f^{\lambda(f)_1}$. Now $X$ is conjugate
to a block diagonal matrix $\bigoplus X(f)$ and $V(f)$ is isomorphic
as an $\F_q[X]$-module to
\[
  V(f)\cong\bigoplus_{i\geq1}\F_q[t]/(f(t)^{\lambda(f)_i}).
\]

Two matrices $X$ and $Y$ lie in the same $G$-orbit if and only if they have the
same Frobenius canonical form, that is, if and only if $\lambda(f,X)=\lambda(f,Y)$ for all
monic irreducibles $f$. It is convenient to define a formal expression
called the {\it type} of $X$ written $\type(X):=\prod_f f^{\lambda(f,X)}$.
Two formal expressions of this kind are regarded as equal if and only if 
their respective exponent partitions are equal.
Thus $X$ and $Y$ lie in the same $G$-orbit if and only if $\type(X)=\type(Y)$.
As it is sometimes convenient to omit trivial factors $f^0$ from
the product $c_X(t)=\prod_f f^{|\lambda(f,X)|}$,  it is therefore
sometimes convenient to omit factors $f^{(0,0,\dots)}$ from $\type(X)$.

It follows from~\cite{Kung,Stong} that
\begin{equation}\label{E:cent}
  |\C_{\GL(n,q)}(X)|=\prod_f |\C_{\GL(V(f))}(X(f))|=\prod_f c(\lambda(f),q^{d(f)})
\end{equation}
where $d(f):=\deg(f)$ and $c(\lambda,q)$ is the function
\[
  c(\lambda,q):=\prod_{i=1}^{\lambda_1}\prod_{k=1}^{m_i(\lambda)}
    (q^{\ell_i(\lambda)}-q^{\ell_i(\lambda)-k})=q^{m(\lambda)\cdot\ell(\lambda)}
    \prod_{i=1}^{\lambda_1}\prod_{k=1}^{m_i(\lambda)}(1-q^{-k}),
\]
see~\cite{Kung,Stong}. By Lemma~\ref{L1}(d) and (e), $c(\lambda,q)$ may be rewritten as
\begin{equation}\label{E:cq}
  c(\lambda,q)=q^{\norm{\lambda'}^2}\prod_{k\geq1}(1-q^{-k})^{e_k(\lambda)}.
\end{equation}
In summary, 
\begin{equation}\label{E:Orbit}
  |X^{\GL(n,q)}|=|\GL(n,q)|\prod_f \frac{1}{c(\lambda(f),q^{d(f)})}.
\end{equation}

The following table of values of $c(\lambda,q)$ both illustrates
formula~(\ref{E:cq}), and provides data for the proof of Lemma~\ref{L:rnq1}.
In this table we shall assume $\lambda_1>\lambda_2>\lambda_3$, and
we use the notation $1^{m_1}2^{m_2}\cdots$ to
indicate multiplicities $m(\lambda)=(m_1,m_2,\dots)$. For example,
$(\lambda_1,\lambda_2)$ is written as $\lambda_1^1\lambda_2^1$ because
$\lambda_1$ and $\lambda_2$ each occur once,
given our assumption $\lambda_1>\lambda_2$.

\begin{table}[ht]
  \begin{center}
  \begin{tabular}{|c|c|c|c|c|}\hline
  $\lambda$&$|\lambda|$&$\lambda'$&$e(\lambda)$&$c(\lambda,q)$\\ \hline \hline
  $\lambda_1^1\lambda_2^1$&$\lambda_1+\lambda_2$
    &$1^{\lambda_1-\lambda_2}2^{\lambda_2}$&$(2)$
    &$q^{|\lambda|+2\lambda_2}(1-q^{-1})^2$\\ \hline
  $\lambda_1^2$&$2\lambda_1$&$2^{\lambda_1}$&$(1,1)$
    &$q^{2|\lambda|}(1-q^{-1})(1-q^{-2})$\\ \hline \hline
  $\lambda_1^1\lambda_2^1\lambda_3^1$&$\lambda_1+\lambda_2+\lambda_3$
    &$1^{\lambda_1-\lambda_2}2^{\lambda_2-\lambda_3}3^{\lambda_3}$&$(3)$
    &$q^{|\lambda|+2\lambda_2+4\lambda_3}(1-q^{-1})^3$\\ \hline
  $\lambda_1^1\lambda_2^2$&$\lambda_1+2\lambda_2$
    &$1^{\lambda_1-\lambda_2}3^{\lambda_2}$&$(2,1)$
    &$q^{|\lambda|+6\lambda_2}(1-q^{-1})^2(1-q^{-2})$\\ \hline
  $\lambda_1^2\lambda_2^1$&$2\lambda_1+\lambda_2$
    &$2^{\lambda_1-\lambda_2}3^{\lambda_2}$&$(2,1)$
    &$q^{2|\lambda|+3\lambda_2}(1-q^{-1})^2(1-q^{-2})$\\ \hline
  $\lambda_1^k$&$k\lambda_1$
    &$k^{\lambda_1}$&$1^k$
    &$q^{\lambda_1 k^2}\prod_{i=1}^k(1-q^{-i})$\\ \hline
  \end{tabular}
  \vskip1mm
  \caption{Values of $c(\lambda,q)$.}\label{Tableclq}
  \end{center}
\end{table}

For a monic irreducible polynomial $g$ over $\F$, a matrix $X\in\M(n,\F)$ 
is said to be \emph{$f$-cyclic relative to $g$} if
the restriction $X(g)$
of $X$ to the $g$-primary component $V(g)$ of $V=\F^{1\times n}$ is cyclic.
Although we are interested to count matrices $X$ that are $f$-cyclic relative
to some monic irreducible divisor $g$ of $c_X(t)$, the complementary count is easier.
We call $X$ {\it uncyclic} if $X(g)$ is not cyclic for {\it all} monic irreducible
divisors $g$ of $c_X(t)$. Equivalently, $X$ is uncyclic if and only if
$\lambda(g)'_1\ne1$ for all $g$ (that is, $\lambda(g)$ has zero or at least two
parts for each $g$). One can readily see from the factorizations
$c_X(t)=\prod g^{\nu(g)}$ and $m_X(t)=\prod g^{\mu(g)}$ of the characteristic
and minimal polynomials of $X$ whether or not $X$ is $f$-cyclic (or uncyclic):
$f$-cyclic relative to $g$ means $\nu(g)=\mu(g)$, and uncyclic means that,  
for all $g$, $\nu(g)>\mu(g)>0$ or
$\nu(g)=\mu(g)=0$.

\section{Generating function as an infinite product}\label{S:cyc}

In this section we express the generating function
\begin{equation}\label{E:gen}
  \Unc_q(u):=1+\sum_{n=1}^\infty \frac{\unc(n,q)}{|\GL(n,q)|}\,u^n
\end{equation}
as an infinite product. It is more convenient to consider the weighted
proportion $\frac{\unc(n,q)}{|\GL(n,q)|}$ of uncyclic matrices in $\M(n,q)$
because orbit sizes have a factor $|\GL(n,q)|$ in the numerator.

Our main tool is the cycle index for $\M(n,q)$ which is defined as
\begin{equation}\label{E:cycidx}
  Z_{\M(n,q)}:=\frac{1}{|\GL(n,q)|}\sum_{X\in\M(n,q)}
  \left(\prod_f x_{f,\lambda(X,f)}\right)
\end{equation}
where the product is over all monic irreducible polynomials and the 
$x_{f,\lambda}$ are indeterminates, see~\cite{Kung,Stong} and
\cite[pp.~35-36]{J}.
If we set $x_{f,()}:=1$ for each $f$, then for each $X$ the
product in~(\ref{E:cycidx}) has finitely many factors different to~1.

Stong~\cite{Stong}, building on the work of Kung~\cite{Kung}, proves that
\begin{equation}\label{E:kung}
  1+\sum_{n=1}^\infty Z_{\M(n,q)}u^n=\prod_f
  \left(\sum_{\l}x_{f,\l} \frac{u^{|\l|d(f)}}{c(\l,q^{d(f)})}\right)
\end{equation}
where the sum on the right-hand side is over all partitions
$(),(1),(2),(1,1),\dots$. By convention
\[
  c((),q)=|\GL(0,q)|=\unc(0,q)=1.
\]

\begin{proposition}\label{P:prod} Let $\Lambda_1$ be the set of 
partitions $\l$ such that $\lambda'_1\neq1$ (equivalently $\lambda$
has $0$ or at least $2$ parts). Then 
\begin{equation}\label{E:prod}
  \Unc_q(u)=\sum_{n\geq0}\frac{\unc(n,q)}{|\GL(n,q)|} u^n=\prod_f
    \left(\sum_{\l\in\Lambda_1}\frac{u^{|\l(f)|d(f)}}{c(\l(f),q^{d(f)})}\right).
\end{equation}
\end{proposition}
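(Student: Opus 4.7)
The plan is to obtain Proposition~\ref{P:prod} as a specialization of Stong's cycle-index identity (\ref{E:kung}). The key observation is the characterization, given just before Section~\ref{S:cyc}: a matrix $X\in\M(n,q)$ is uncyclic exactly when, for every monic irreducible $g\in\F_q[t]$, the partition $\l(g,X)$ satisfies $\l(g,X)'_1\ne 1$, i.e.\ $\l(g,X)\in\Lambda_1$. (Recall $()\in\Lambda_1$ since $()'_1=0$, while the partitions $(k)$ with $k\ge 1$ are precisely those excluded, as $(k)'=(1^k)$ has $(1^k)_1=1$.) So uncyclicity is detected coordinatewise by membership of each $\l(g,X)$ in $\Lambda_1$.

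Concretely, I would substitute into (\ref{E:kung}) the values
\[
   x_{f,\l}=
   \begin{cases}
     1 & \text{if } \l\in\Lambda_1,\\
     0 & \text{otherwise,}
   \end{cases}
\]
for every monic irreducible $f$. On the left-hand side of (\ref{E:kung}), using the definition (\ref{E:cycidx}) of $Z_{\M(n,q)}$, the product $\prod_f x_{f,\l(X,f)}$ equals $1$ when $X$ is uncyclic and $0$ otherwise, so the coefficient of $u^n$ becomes $\unc(n,q)/|\GL(n,q)|$, yielding $\Unc_q(u)$ by the definition (\ref{E:gen}). (The $n=0$ term gives the $1$, consistent with the convention $\unc(0,q)=1$.) On the right-hand side of (\ref{E:kung}), the substitution simply restricts the inner sum from all partitions $\l$ to $\l\in\Lambda_1$, producing exactly the product in (\ref{E:prod}).

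The only subtlety is that (\ref{E:kung}) is an identity in indeterminates $x_{f,\l}$ with infinitely many irreducibles $f$ in play, and one must verify that the substitution is legal, i.e.\ that both sides make sense as formal power series in $u$ after the specialization. This is immediate: each inner sum $\sum_{\l\in\Lambda_1}u^{|\l|d(f)}/c(\l,q^{d(f)})$ has constant term $1$ (the $\l=()$ contribution), and for any fixed $n$ only finitely many factors $f$ contribute non-constant terms to the coefficient of $u^n$, because $|\l|d(f)\le n$ forces $d(f)\le n$ and the inner sum truncated at degree $n$ has finitely many nonzero terms. Hence the infinite product is a well-defined formal power series, term-by-term equality of coefficients of $u^n$ on the two sides gives the identity in (\ref{E:prod}), and the main (and essentially only) obstacle—justifying the specialization of (\ref{E:kung}) as a formal identity in $\Z[[u]]$—is handled by this coefficient-wise finiteness.
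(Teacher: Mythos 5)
Your proposal is correct and coincides with the paper's own (second) argument: the proof of Proposition~\ref{P:prod} explicitly offers, as an alternative to its direct orbit-counting computation via (\ref{E:Orbit}), exactly your specialization $x_{f,\l}=1$ for $\l\in\Lambda_1$ and $0$ otherwise in Stong's identity (\ref{E:kung}). Your extra remarks on the formal-power-series legitimacy of the substitution are a welcome but routine supplement.
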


\begin{proof} From the remarks above, $X$ is uncyclic
if and only if $\lambda(f)\in\Lambda_1$ for all $f$. As the set of
uncyclic matrices in $\M(n,q)$ is a union of $\GL(n,q)$-orbits, it
follows from (\ref{E:Orbit}) that
\[
  \unc(n,q)=\sum |\GL(n,q)|\prod_f \frac{1}{c(\lambda(f),q^{d(f)})}
\]
where the sum ranges over all decompositions $n=\sum |\lambda(f)|d(f)$
with $\lambda(f)\in\Lambda_1$. This proves (\ref{E:prod}).

An alternative proof uses (\ref{E:kung}). In (\ref{E:cycidx}) set
$x_{f,\lambda}=1$ if $\lambda\in\Lambda_1$, and 0 otherwise. Then
$Z_{\M(n,q)}$ equals $\unc(n,q)/|\GL(n,q)|$. On the other hand,
the bracketed sums of~(\ref{E:kung}) and (\ref{E:prod}) are equal.
\end{proof}

As the bracketed sum in (\ref{E:prod}) is the same for all $f$ with
degree $r$, we define
\begin{equation}\label{E:Aa}
  A(q,u):=\sum_{\lambda\in\Lambda_1}\frac{u^{|\lambda|}}{c(\lambda,q)}
  \quad\text{and}\quad
  a_n(q):=\sum_{\lambda\vdash n,\, \lambda\ne(n)}\frac{1}{c(\lambda,q)}.
\end{equation}
Thus $A(q,u)=\sum_{n\geq0}a_n(q)u^n$ where $a_0(q)=1$, $a_1(q)=0$,
$a_2(q)=|\GL(2,q)|^{-1}$, etc. Denote by $N(r,q)$ the number of monic
irreducible polynomials over $\F_q$ of degree~$r$.
Then (\ref{E:prod}) may be rewritten
\begin{equation}\label{E:Nprod}
  \Unc_q(u)=\sum_{n\geq0}\frac{\unc(n,q)}{|\GL(n,q)|} u^n
  =\prod_{r\geq1} A(q^r,u^r)^{N(r,q)}=
  \prod_{r\geq1} \left(1+\sum_{n\geq2}a_n(q^r)u^{rn}\right)^{N(r,q)}.
\end{equation}

A closed formula for $\unc(n,q)$ can be obtained by expanding the
products in (\ref{E:Nprod}). This formula, though unwieldy, may be used to
to determine $\unc(n,q)$ for small $n$.

\begin{lemma}\label{L:multi}
Given $n\in\N$ and a partition $\l=1^{m_1}2^{m_2}\cdots$ with $\l'_1\leq n$,
denote the multinomial coefficient
$\binom{n}{n-\sum_{i\geq1} m_i,m_1,m_2,\dots}=
\frac{n!}{(n-\lambda'_1)!m_1!m_2!\cdots}$
by $\binom{n}{m(\l)}$. Then
\begin{equation}\label{E:multi}
  (1+a_1u+a_2u^2+\cdots)^n=
    \sum_{k\geq0}\left(\sum_{\l\vdash\, k}
    \binom{n}{m(\l)}a^{m(\l)}\right)u^k
\end{equation}
where $a^{m(\l)}:=a_1^{m_1}a_2^{m_2}\cdots$.
\end{lemma}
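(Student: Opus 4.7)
The plan is to derive (\ref{E:multi}) as a direct consequence of the multinomial theorem together with a reindexing of the resulting sum via partitions. First I would expand $(1+a_1u+a_2u^2+\cdots)^n$ by the multinomial theorem for formal power series:
\begin{equation*}
(1+a_1u+a_2u^2+\cdots)^n \;=\; \sum_{(m_0,m_1,m_2,\ldots)} \binom{n}{m_0,m_1,m_2,\ldots}\,1^{m_0}\prod_{i\geq1}(a_i u^i)^{m_i},
\end{equation*}
the sum ranging over tuples of non-negative integers with finite support satisfying $m_0+m_1+m_2+\cdots=n$. Since $m_0=n-\sum_{i\geq1}m_i$ is forced, the effective indexing set is the collection of finitely-supported tuples $(m_1,m_2,\ldots)$ with $\sum_{i\geq1}m_i\leq n$.

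Next I would invoke the elementary bijection between such tuples and partitions $\lambda$, given by $m_i=m_i(\lambda)$ for $i\geq1$. Under this bijection, Lemma~\ref{L1}(a) and~(g) give $\sum_{i\geq1}i\,m_i=|\lambda|$ and $\sum_{i\geq1}m_i=\lambda'_1$, so that $\prod_{i\geq1}(a_i u^i)^{m_i}=a^{m(\lambda)}u^{|\lambda|}$, and the constraint $\sum m_i\leq n$ becomes exactly $\lambda'_1\leq n$---precisely the condition under which the multinomial coefficient $\binom{n}{m(\lambda)}$ is defined. Adopting the natural convention that $\binom{n}{m(\lambda)}=0$ whenever $\lambda'_1>n$ allows me to drop the constraint from the summation without loss.

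Finally, I would collect terms according to the power $k=|\lambda|$ of $u$, which immediately yields the right-hand side of (\ref{E:multi}). There is no genuine obstacle here; the proof is bookkeeping. The only mild pitfall is making sure that the ``missing'' coordinate $m_0$ in the multinomial expansion is correctly absorbed when re-expressing the summation in partition form, and that the convention on $\binom{n}{m(\lambda)}$ for $\lambda'_1>n$ is stated explicitly so that the unrestricted sum $\sum_{\lambda\vdash k}$ in (\ref{E:multi}) agrees with the restricted sum produced by the multinomial theorem.
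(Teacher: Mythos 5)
Your proposal is correct and is essentially the paper's argument: the paper expands the $n$-th power as a sum over tuples in $\N^n$ and counts that each monomial $a_{\l_1}\cdots a_{\l_n}$ occurs $\binom{n}{m(\l)}$ times, which is precisely an inline proof of the multinomial theorem you invoke, followed by the same reindexing by partitions and collection of the coefficient of $u^k$. Your explicit convention that $\binom{n}{m(\l)}=0$ when $\l'_1>n$ is a small point the paper leaves implicit, but there is no substantive difference.
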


\begin{proof}
Set $a_0:=1$. Expanding the left-hand side of (\ref{E:multi}) gives
\begin{equation}\label{E:exp}
  \sum_{\l\in\N^n}a_{\l_1}u^{\l_1}a_{\l_2}u^{\l_2}\cdots a_{\l_n}u^{\l_n}=
  \sum_{k\geq0}
  \left(\sum_{\l\in\N^n,\,  {|\l|=k}}a_{\l_1}a_{\l_2}\cdots a_{\l_n}\right)u^k.
\end{equation}
The term $a_{\l_1}\cdots a_{\l_n}$ will be repeated $\binom{n}{m(\l)}$
times, where $\binom{n}{m(\l)}$ is the number of distinct elements of $\N^n$
obtained by permuting the coordinates of $\l=(\l_1,\dots,\l_n)$. If
$1^{m_1}2^{m_2}\cdots$ is the unique partition corresponding to $\l$, then
$a_{\l_1}\cdots a_{\l_n}=a^{m(\l)}$ because $a_i$ has multiplicity
$m_i$ for $i\geq1$, and multiplicity $n-\sum_{i\geq1} m_i=n-\l'_1$ for $i=0$
by Lemma~\ref{L1}(g).
\end{proof}

Lemma~\ref{L:multi} may be used to expand the powers in (\ref{E:Nprod}).
Since in (\ref{E:Nprod}) we have $a_1=0$, it follows from (\ref{E:exp}) that
the inner sum in (\ref{E:multi})
is over partitions $\lambda$ of $k$ with no part of size~1.
For example, if $k=5$, then
$\lambda=(5)$ or $(3,2)$ and $\binom{n}{m(\l)}$ equals $n$ or $n(n-1)$,
respectively.
Expanding the power $(1+a_2z^2+a_3z^3+\cdots)^n$ using Lemma~\ref{L:multi} gives
\begin{align*}\label{E:genpow}
  &1+na_2z^2+na_3z^3
    +\left(na_4+\binom{n}{2}a_2^2\right)z^4
    +\left(na_5+2\binom{n}{2}a_2a_3\right)z^5+\cdots\\
  &=1+n\left(\sum_{i} a_iz^i\right)
  +\binom{n}{2}\left(\sum_{i} a_i^2z^{2i}+2\sum_{i<j} a_ia_jz^{i+j}\right)\\
  &\quad+\binom{n}{3}\left(\sum_{i} a_i^3z^{3i}+3\sum_{i<j} a_i^2a_jz^{2i+j}
    +3\sum_{i<j} a_ia_j^2z^{i+2j}+6\sum_{i<j<k} a_ia_ja_kz^{i+j+k}\right)
  +\cdots.
\end{align*}
In order to evaluate (\ref{E:Nprod}) it is useful to substitute $z=u^r$ and
$n=N(r,q)$ in the above expression. By using (\ref{E:multi}) and
(\ref{E:Nprod}) one can, in principle, write down a closed form for
$\unc(n,q)$. The resulting closed form is rather complicated, and it is
not obviously useful for bounding $\unc(n,q)$.
In \cite[Appendix~2]{http} we give a {\sc Magma} \cite{Magma} computer
program for computing $\unc(n,q)$
for small $n$. Given that the number of partitions of $n$ (even those
with no part of size~1) is asymptotically
exponential 
(see \cite[p.~70]{Andrews}), our computer program can compute $\unc(n,q)$ only
for small $n$.

For very small values of $n$ one does not need a computer program.
Equating the coefficient of $u^n$ for $n\leq5$ on both sides of
(\ref{E:Nprod}) gives values of $\frac{\unc(n,q)}{|\GL(n,q)|}$ in
terms of the polynomials $a_n(q)$ defined in (\ref{E:Aa}). This information
is summarized in Table~\ref{TableSplit}.
\def\phan{\phantom{|_{|}}} 
\begin{table}[ht]
  \begin{center}\renewcommand{\arraystretch}{1.5}
  \begin{tabular}{|c|l|l|}\hline
    $n$&${\frac{\unc(n,q)}{|\GL(n,q)|}}_{\phan}$&$a_n(q)$\\ \hline
    2&$\binom{q}{1}a_2(q)$&$\frac{1}{c((1,1),q)}=\frac{1}{q^4-q^3-q^2+q}$\\
    3&$\binom{q}{1}a_3(q)$
     &$\frac{1}{c((1,1,1),q)}+\frac{1}{c((2,1),q)}
     =\frac{q^3 + q^2 - 1}{q^8 - q^7 - q^6 + q^4 + q^3 - q^2}$\\
    4&$\binom{q}{1}a_4(q)+\binom{q}{2}a_2(q)+N(2,q)a_2(q^2)$
     &$\frac{q^7+q^6+q^5-q^4-q^3-q^2+1}{q^{13}-q^{12}-q^{11}+2q^8-q^5-q^4+q^3}$\\
    5&$\binom{q}{1}a_5(q)+q(q-1)a_2(q)a_3(q)$
     &$\frac{q^{12}+q^{11}+q^{10}-q^8-2q^7-q^6+q^4+q^3+q^2-1}{q^{19}-q^{18}
     -q^{17}+q^{14}+q^{13}+q^{12}-q^{11}-q^{10}-q^9+q^6+q^5-q^4}$\\
    \hline
  \end{tabular}
\vskip1mm
  \caption{Values of $\frac{\unc(n,q)}{|\GL(n,q)|}$ and $a_n(q)$
  for $2\leq n\leq5$.}\label{TableSplit}
  \end{center}
\end{table}

It is easy to show that $\unc(1,q)=0$. The values of $\unc(n,q)$ for
$n=2,3,4,5$ can be computed from Table~\ref{TableSplit}.
We list the values and $\unc(n,q)$ for $n\leq7$ in Table~\ref{TableUnc} below. 

\begin{table}[ht]
  \renewcommand{\arraystretch}{1.05}    
  \begin{center}
  \begin{tabular}{|l|l|}\hline
  $n$&$\unc(n,q)$\\ \hline
  2&$q$\\
  3&$q^5 + q^4 - q^2$\\
  4&$q^{11}+2q^{10} - 2q^7 - q^5 + q^4$\\
  5&$q^{19}+2q^{18}+2q^{17}+q^{16}-q^{15}-2q^{14}-3q^{13}-q^{12}+q^{10}
      +q^9+q^8-q^7$\\
  6&$q^{29}+3q^{28}+3q^{27}+3q^{26}-q^{25}-5q^{23}-5q^{22}-3q^{21}-2q^{20}
      +2q^{18}+4q^{17}+3q^{15}$\\
   &$\hfill-q^{14}-2q^{12}+q^{11}$\\
7&$q^{41}+3q^{40}+5q^{39}+5q^{38}+3q^{37}-4q^{35}-9q^{34}-11q^{33}-
    12q^{32}-7q^{31}-3q^{30}+4q^{29}$\\
 &$\hfill+6q^{28}+11q^{27}+8q^{26}+7q^{25}
    +q^{23}-3q^{22}-2q^{21}-3q^{20}+2q^{17}-q^{16}$\\
  \hline
  \end{tabular}
  \vskip2mm
  \caption{Values of $\unc(n,q)$ for $2\leq n\leq7$.}\label{TableUnc}
  \end{center}
\end{table}

The polynomials $\unc(n,q)$ for $n\leq 37$ were computed with the
\Magma\ \cite{Magma} programs in~\cite[Appendix~2]{http} and stored in the
database~\cite[Appendix~1]{http}.
Lemma~\ref{L:positive} below is useful for bounding polynomials in
$q$ (or $q^{-1}$).

\begin{lemma}\label{L:positive}
Suppose that $m,n$ are positive integers and
$\alpha_0,\alpha_1,\dots,\alpha_{m-1}$,
$\beta_0,\beta_1,\dots,\beta_{n-1}$ are non-negative real numbers. Set
\[
  c(q):=(\alpha_{m-1}q^{m-1}+\cdots+\alpha_1 q+\alpha_0)q^n
  -(\beta_{n-1}q^{n-1}+\cdots+\beta_1 q+\beta_0).
\]
If $q_0\geq0$ and $c(q_0)\geq0$, then $c(q)\geq0$ for all $q\geq q_0$.
\end{lemma}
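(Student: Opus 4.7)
The plan is to reduce the conclusion to a one‑line monotonicity statement. Divide $c(q)$ by $q^n$ (valid for $q>0$) to obtain the rational function
\begin{equation*}
  \frac{c(q)}{q^n}
  \;=\;\sum_{i=0}^{m-1}\alpha_i\,q^{i}\;-\;\sum_{j=0}^{n-1}\beta_j\,q^{j-n}.
\end{equation*}
Each summand is a non-decreasing function of $q$ on $(0,\infty)$: in the first sum, $\alpha_i\geq 0$ and the exponent $i\geq 0$, so $\alpha_i q^i$ is non-decreasing; in the second sum, the exponent $j-n$ is strictly negative, so $q^{j-n}$ is strictly decreasing and hence $-\beta_j q^{j-n}$ is non-decreasing for any $\beta_j\geq 0$. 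Adding them all up, $q\mapsto c(q)/q^n$ is non-decreasing on $(0,\infty)$.

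The conclusion is then immediate. If $q_0>0$ and $c(q_0)\geq 0$, then $c(q_0)/q_0^n\geq 0$, and by monotonicity
\begin{equation*}
  \frac{c(q)}{q^n}\;\geq\;\frac{c(q_0)}{q_0^n}\;\geq\;0
  \qquad\text{for all } q\geq q_0,
\end{equation*}
so multiplying through by $q^n>0$ yields $c(q)\geq 0$. The boundary case $q_0=0$ is mildly delicate since dividing by $q^n$ is no longer legal there, but the hypothesis $c(0)=-\beta_0\geq 0$ combined with $\beta_0\geq 0$ forces $\beta_0=0$; in any event, the intended applications have $q$ ranging over prime powers $q\geq 2$, so $q_0\geq 2>0$ and this case does not arise.

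There is no real obstacle to this argument; the content of the lemma is simply the observation that pulling out the maximal positive power $q^n$ converts \emph{every} remaining monomial into a function that moves in the same direction. The only thing to be careful about is the sign of the exponents after division, which is exactly what makes the non-negative coefficient hypotheses $\alpha_i\geq 0$ and $\beta_j\geq 0$ pull in the same direction.
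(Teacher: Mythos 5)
Your proof is correct and is essentially the paper's own argument: the authors likewise write $c(q)=(a(q)-b(q))q^n$ with $a(q)=\sum\alpha_i q^i$ and $b(q)=\sum\beta_j q^{j-n}$ and conclude by monotonicity of $a$ and $-b$. Your explicit treatment of the boundary case $q_0=0$ (where division by $q^n$ fails) is a small point the paper glosses over, but otherwise the two proofs coincide.
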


\begin{proof}
Set $a(q):=\alpha_{m-1}q^{m-1}+\cdots+\alpha_1 q+\alpha_0$, and
$b(q):=\beta_{n-1}q^{-1}+\cdots+\beta_1 q^{-(n-1)}+\beta_0q^{-n}$.
Then $c(q)=(a(q)-b(q))q^n$. Since $a(q)\geq a(q_0)$ and $b(q_0)\geq b(q)$,
it follows that $a(q)-b(q)\geq a(q_0)-b(q_0)$ and so $c(q)\geq c(q_0)\geq 0$.
Thus $c(q)\geq0$ for all $q\geq q_0$.
\end{proof}

Lemma~\ref{L:positive} may be applied to 
verify Conjecture~\ref{C} for small $n$.


\begin{proposition}\label{P:conjbound}
If $q\geq2$ and $1\leq n\leq37$, then
$\unc(n,q)\leq q^{n^2-n-1}(1+\frac{1}{2q})^n$.
\end{proposition}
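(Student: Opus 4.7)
The plan is, for each of the $37$ values $n\in\{1,\dots,37\}$, to reduce the claimed inequality to a polynomial positivity statement of the form that Lemma~\ref{L:positive} handles, and then verify it by a single numerical evaluation at $q=2$. Multiplying through by $2^nq^n$ rewrites the inequality of Conjecture~\ref{C} (restricted to these $n$) as
\[
R_n(q):=(2q+1)^n q^{n^2-2n-1}-2^n\unc(n,q)\geq 0\qquad(q\geq 2).
\]
The case $n=1$ is immediate since $\unc(1,q)=0$, and for $n=2$ one checks directly that $\unc(2,q)=q<q+1+\tfrac{1}{4q}=q(1+\tfrac{1}{2q})^2$. For each remaining $n\in\{3,\dots,37\}$, the exponent $n^2-2n-1$ is non-negative, so $R_n(q)$ is an honest polynomial in $q$; and Table~\ref{TableUnc} (extended by the database~\cite[Appendix~1]{http}) shows that $\unc(n,q)$ is monic of degree $n^2-n-1$, so that $\deg R_n\leq n^2-n-2$.

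For each $n\in\{3,\dots,37\}$ I would form $R_n(q)$ explicitly using the \Magma\ routines of~\cite[Appendix~2]{http}, then write it in the form $a_n(q)\,q^{N_n}-b_n(q)$ with $a_n,b_n$ coefficient-wise non-negative polynomials, and finally evaluate $R_n(2)$. In every case $R_n(2)\geq 0$ is a finite machine check, and Lemma~\ref{L:positive} then yields $R_n(q)\geq 0$ for all $q\geq 2$, completing the proof for that $n$.

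The main obstacle is the existence and explicit construction of the decomposition $R_n(q)=a_n(q)q^{N_n}-b_n(q)$: the polynomial $\unc(n,q)$ does not have a clean sign pattern (as is already visible for $n=5,6,7$ in Table~\ref{TableUnc}), so $R_n(q)$ may have positive and negative coefficients interlaced, and a single sign cutoff at the degree level need not separate them. When this happens one can either absorb an isolated low-degree positive term of $R_n$ into the block $a_n(q)q^{N_n}$ by lowering $N_n$ and rewriting the compensating correction inside $b_n(q)$, or else decompose $R_n=\sum_j R_n^{(j)}$ into a small number of summands each individually of the Lemma~\ref{L:positive} shape and apply the lemma to each summand. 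In either variant the bookkeeping is routine for each fixed $n\leq 37$ and is executed by the \Magma\ scripts; no additional theoretical input beyond Lemma~\ref{L:positive} is required.
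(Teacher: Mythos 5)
Your proposal is correct and follows essentially the same route as the paper: form the difference between the bound and $\unc(n,q)$, split it into summands of the shape handled by Lemma~\ref{L:positive} (evaluating at a suitable $q_0$ and checking the remaining small $q$ directly when needed), and delegate the $37$ cases to the \Magma\ scripts of~\cite[Appendix~2]{http}. The only cosmetic difference is your clearing of denominators by multiplying by $2^nq^n$, which is unnecessary since Lemma~\ref{L:positive} already allows non-negative real (not just integer) coefficients.
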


\begin{proof}
The idea is to list the difference polynomials
$d_n(q)=q^{n^2-n-1}(1+\frac{1}{2q})^n-\unc(n,q)$ for $1\leq n\leq37$ and repeatedly
apply Lemma~\ref{L:positive}. For example, $d_5(q)$ equals
\[
  d_5(q)=\frac{1}{2}q^{18}+\frac{1}{2}q^{17}+\frac{1}{4}q^{16}+\frac{21}{16}q^{15}
  +\frac{65}{32}q^{14}+3q^{13}+q^{12}-q^{10}-q^9-q^8+q^7,
\]
and Lemma~\ref{L:positive} with $q_0=2$ shows that polynomials
$d_5(q)-q^7$ and $q^7$ are both non-negative for $q\geq2$. Adding shows
$d_5(q)\geq0$ for $q\geq2$. For more a complicated polynomial such as $q^8-3q^6+q^5-5q^4$,
Lemma~\ref{L:positive} shows $q^8-3q^6\geq0$ for $q\geq2$ and
$q^5-5q^4\geq0$ for $q\geq5$.
Thus $q^8-3q^6+q^5-5q^4\geq0$ holds for $q\geq5$. Evaluating at $q=2,3,4$
shows that $q^8-3q^6+q^5-5q^4\geq0$ holds for $q\geq2$.
The \Magma~\cite{Magma} computer program listed in~\cite[Appendix~2]{http}
uses these ideas to verify Conjecture~\ref{C} for $1\leq n\leq37$.
\end{proof}

\section{A lower bound for $\unc(n,q)$}\label{S:lowerb}

In this section we count the uncyclic matrices $X\in\M(n,q)$ with
$\type(X)=(t-\alpha)^\l$ or $\type(X)=(t-\alpha)^\l(t-\beta)^\mu$, where
$\alpha,\beta$ are distinct elements of $\F_q$,
and $\lambda, \mu$ are partitions with $|\lambda|=n$ or 
$|\lambda|+|\mu|=n$ respectively (recall the definition of $\type(X)$ 
preceding (\ref{E:cent})). 
If Conjecture~\ref{C} were correct, then it would follow from the binomial
theorem that
\[
  \unc(n,q)\leq q^{n^2-n-1}+\frac n2 q^{n^2-n-2}+\OO(q^{n^2-n-3})
\]
where the constant involved in $\OO(q^{n^2-n-3})$ is independent of $q$. 
The main result of this section is that there is a lower bound comparable
to this conjectured upper bound.

\begin{theorem}\label{T:lowerb}
If $q\geq2$ and $n\geq 3$, then
\[
  q^{n^2-n-1}\left(1+\left(\frac{n-1}{2}\right)q^{-1}-q^{-3}\right)
  <\unc(n,q).
\]
\end{theorem}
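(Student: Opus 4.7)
The plan is to lower-bound $\unc(n,q)$ by counting uncyclic matrices in the two families flagged at the opening of this section: (A) those with type $(t-\alpha)^\lambda$ for some $\alpha\in\F_q$ and uncyclic partition $\lambda\vdash n$, and (B) those with type $(t-\alpha)^\lambda(t-\beta)^\mu$ for distinct $\alpha,\beta\in\F_q$ and uncyclic partitions $\lambda,\mu$ with $|\lambda|+|\mu|=n$. Distinct types yield disjoint $\GL(n,q)$-orbits, so the combined count is a genuine lower bound. Using (\ref{E:Orbit}) and the series $a_n(q)$ from (\ref{E:Aa}), this gives
\[
 \unc(n,q)\ \geq\ q\,|\GL(n,q)|\,a_n(q)\ +\ \binom{q}{2}|\GL(n,q)|\sum_{a=2}^{n-2}a_a(q)\,a_{n-a}(q),
\]
where the factor $\binom{q}{2}=q(q-1)/2$ in family (B) accounts for the $\mathbb{Z}/2$ symmetry that swaps $(\alpha,\lambda)\leftrightarrow(\beta,\mu)$ inside an unordered type.

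For the leading-order analysis, I would retain in each $a_m(q)$ only its \emph{dominant partition} $\lambda^{(m)}$: namely $(m-1,1)$ for $m\geq 3$ and $(1,1)$ for $m=2$. Table~\ref{Tableclq} supplies $c((m-1,1),q)=q^{m+2}(1-q^{-1})^2$ and $c((1,1),q)=q^4(1-q^{-1})(1-q^{-2})$, and combined with the explicit form of $|\GL(n,q)|$, family (A) contributes at least $q^{n^2-n-1}\prod_{i=2}^n(1-q^{-i})/(1-q^{-1})$, which expands in $q^{-1}$ as $q^{n^2-n-1}(1+q^{-1}-q^{-3}+O(q^{-4}))$. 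Family (B) is a sum of $n-3$ terms each of magnitude $\sim q^{n^2-n-4}$ multiplied by $\binom{q}{2}\sim q^2/2$, and so contributes roughly $\frac{n-3}{2}\,q^{n^2-n-2}$ plus lower-order terms. Adding the two, the coefficient of $q^{n^2-n-2}$ becomes $1+\frac{n-3}{2}=\frac{n-1}{2}$, matching the theorem, while the leading term remains $q^{n^2-n-1}$.

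The main technical obstacle is to convert this leading-order analysis into a rigorous polynomial inequality in $q^{-1}$ valid uniformly for $q\geq 2$ and $n\geq 3$, not just an asymptotic statement. The negative $-q^{-3}$ in the stated bound must absorb both the tail of the Euler-type product $\prod_{i=2}^n(1-q^{-i})$ appearing in family (A) and the negative $-\frac{n-3}{2}q^{n^2-n-3}$ that arises from $\binom{q}{2}=\tfrac{q^2-q}{2}$ in family (B). Particular care is needed at the boundary values $a=2$ and $a=n-2$, where the partition $(1,1)$ introduces a factor $(1-q^{-2})$ rather than $(1-q^{-1})^2$ in the centralizer. Finally, the edge case $n=3$ deserves separate attention: family (B) is empty there, while the single-eigenvalue contribution including \emph{all} uncyclic partitions (in particular $(1,1,1)$ in addition to $(2,1)$) already equals the right-hand side; Lemma~\ref{L:positive} is well suited to handling any residual polynomial positivity checks of this kind.
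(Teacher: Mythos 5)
Your overall architecture coincides with the paper's: family (A) consists of the types $(t-\alpha)^\lambda$ and family (B) of the types $(t-\alpha)^\lambda(t-\beta)^\mu$ with $\alpha\neq\beta$, and your treatment of family (B) — dominant partitions $(\lambda_1,1)$ and $(\mu_1,1)$, the factor $\binom{q}{2}$ for the swap symmetry, the separate boundary behaviour of $(1,1)$ — is essentially the paper's case analysis. The genuine gap is in family (A). Retaining only the dominant partition $(n-1,1)$ gives the lower bound
\[
  q\cdot\frac{|\GL(n,q)|}{c((n-1,1),q)}
  \;=\;q^{n^2-n-1}\,\frac{\prod_{i=2}^{n}(1-q^{-i})}{1-q^{-1}},
\]
and the fraction on the right has the expansion $1+q^{-1}+0\cdot q^{-2}-q^{-3}-2q^{-4}-\cdots$ in powers of $q^{-1}$: the omitted terms are negative, and in fact $\prod_{i=2}^{n}(1-q^{-i})/(1-q^{-1})$ is \emph{strictly less} than $1+q^{-1}-q^{-3}$ for every $q\geq2$ and $n\geq4$ (for $q=2$ it is about $1.19$ against the required $1.375$; for $q=3$ about $1.26$ against $1.296$). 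So the $-q^{-3}$ cannot "absorb the tail of the Euler-type product": family (A), bounded as you propose, never reaches the coefficient $1+q^{-1}-q^{-3}$ that your bookkeeping assigns to it, and the shortfall (of order $q^{-4}$, but numerically close to $3q^{-4}$ at $q=2$) is not covered by the slack in family (B) when $n$ is small — for $n=5,6$ the family (B) count exceeds $\frac{n-3}{2}q^{n^2-n-2}$ by essentially nothing.

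The repair is either to include at least the partition $(n-2,2)$ in family (A), whose orbits contribute an additional $\approx q^{n^2-n-3}$ and restore a $+q^{-2}$ term, or — what the paper does in Lemma~\ref{L:rnq1} — to count family (A) by complementation rather than partition by partition: by Steinberg's theorem there are exactly $q^{n(n-1)}$ $\alpha$-potent matrices, the cyclic ones (type $(t-\alpha)^{(n)}$) number $q^{n^2}\omega(n,q)/\bigl(q^{n}(1-q^{-1})\bigr)$, and subtracting gives $r(n,q,1)=c_0(n,q)\,q^{n^2-n-1}$ with $c_0(n,q)=q^{2}\bigl(1-\prod_{i=2}^{n}(1-q^{-i})\bigr)$, which is increasing in $n$ and equals $1+q^{-1}-q^{-3}$ exactly at $n=3$, so no delicate absorption is needed. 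Your remaining observations are sound, including the remark that at $n=3$ the single-eigenvalue count gives equality with the right-hand side (a boundary issue present in the paper's own claim of strict inequality as well).
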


The proof uses the quantity 
$\omega(n,q):=\prod_{i=1}^n (1-q^{-i})=q^{-n^2}|\GL(n,q)|$.

\begin{lemma}\label{L:omega}
If $n\geq1$, then $(1-q^{-1})^2<1-q^{-1}-q^{-2}<\omega(\infty,q)
<\omega(n,q)\leq 1-q^{-1}$.
\end{lemma}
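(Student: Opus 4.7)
The plan is to dispatch the four inequalities separately, saving the genuinely substantive one for last. The rightmost inequality $\omega(n,q) \leq 1-q^{-1}$ is immediate: factor out the $i=1$ term, and observe that the remaining factors $1-q^{-i}$ for $2 \leq i \leq n$ lie in $(0,1]$ (with equality when $n=1$ and no further factors). The inequality $\omega(\infty,q) < \omega(n,q)$ follows from $\omega(\infty,q) = \omega(n,q)\prod_{i>n}(1-q^{-i})$, noting that the convergent tail is a positive real number strictly less than $1$. The leftmost inequality $(1-q^{-1})^2 < 1-q^{-1}-q^{-2}$ reduces after expansion to $q^{-1}(1-2q^{-1}) > 0$, which is strict for $q > 2$; at $q=2$ both sides coincide, so the statement is presumably intended with $q \geq 3$ in mind (or tacitly with $\leq$).

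The substantive step is the middle inequality $1-q^{-1}-q^{-2} < \omega(\infty,q)$. My approach is to extract the first two factors and bound what remains via the elementary Weierstrass product inequality, which states that $\prod(1-x_i) \geq 1-\sum x_i$ whenever $x_i \in [0,1]$ (an easy induction). Applied to $x_i = q^{-i}$ with $i \geq 3$ and summing the geometric series,
\[
  \prod_{i=3}^\infty (1-q^{-i}) \geq 1 - \frac{q^{-3}}{1-q^{-1}}.
\]
Multiplying by $(1-q^{-1})(1-q^{-2})$ and simplifying, the $q^{-3}$ terms introduced on each side cancel exactly and one is left with the clean bound $\omega(\infty,q) \geq 1-q^{-1}-q^{-2}+q^{-5}$, which strictly beats the target.

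The main obstacle is calibrating how many initial factors to peel off. Applying Weierstrass to the whole product yields the useless estimate $1-1/(q-1)$, which vanishes at $q=2$; extracting only one factor recovers the target bound but only with $\geq$, not the required $>$. Two factors is the minimum that gives a strict improvement, and it works precisely because of the algebraic cancellation described above. A conceptual alternative is Euler's pentagonal number theorem, giving the exact series $\omega(\infty,q) = 1-q^{-1}-q^{-2}+q^{-5}+q^{-7}-q^{-12}-q^{-15}+\cdots$; after the first three displayed terms, grouping the remainder in successive pairs $(q^{-5}-q^{-12})+(q^{-7}-q^{-15})+\cdots$ shows the excess over $1-q^{-1}-q^{-2}$ is manifestly positive. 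Either route delivers the bound.
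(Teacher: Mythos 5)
Your proof is correct, and it is genuinely different from the paper's treatment: the paper does not prove this lemma at all, but simply cites Lemma~3.5 and Corollary~3.6 of the Neumann--Praeger reference \cite{NP}. Your argument is self-contained and elementary, and the key step --- peeling off exactly the first two factors before applying the Weierstrass inequality $\prod(1-x_i)\geq 1-\sum x_i$, so that the $q^{-3}$ terms cancel and one lands on $\omega(\infty,q)\geq 1-q^{-1}-q^{-2}+q^{-5}$ --- is exactly the right calibration; one factor gives only $\geq$ and zero factors gives a bound that degenerates at $q=2$. The pentagonal-number-theorem alternative you sketch is also sound and in fact explains \emph{why} $1-q^{-1}-q^{-2}$ is the natural truncation. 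One further point in your favour: your observation that $(1-q^{-1})^2 = 1-q^{-1}-q^{-2}$ when $q=2$ is a correct (if minor) criticism of the lemma as stated --- the leftmost inequality should be $\leq$ to cover $q=2$. This does no damage downstream, since the only place the paper invokes that end of the chain is to conclude $\omega(n,q)>(1-q^{-1})^2$ in the proof of Theorem~\ref{T:lowerb}, and that composite strict inequality survives because the middle inequality is strict. What the citation buys the paper is brevity; what your argument buys is a verifiable proof and a slightly sharper constant ($+q^{-5}$) for free.
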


\begin{proof}
See Lemma~3.5 and Corollary~3.6 of \cite{NP}.
\end{proof}

Let $\alpha\in\F_q$. A matrix $X\in\M(n,q)$ is {\it $\alpha$-potent}
if its characteristic polynomial is $c_X(t)=(t-\alpha)^n$.
The map  $X\rightarrow X+(\beta-\alpha) I$ is a bijection between the
subsets of $\alpha$-potent matrices and $\beta$-potent matrices in
$\M(n,q)$. In particular, the numbers of $\alpha$-potent and unipotent
matrices in $\M(n,q)$ are equal. The number of unipotent matrices
in $\M(n,q)$ (or in $\GL(n,q)$) equals $q^{n(n-1)}$ by a theorem of
Steinberg~\cite[Theorem~6.6.1]{Carter}.
Denote by $\U(n,q,\alpha)$ the \emph{set of uncyclic $\alpha$-potent matrices}
in $\M(n,q)$. Note that $X\in\U(n,q,\alpha)$ if and only if
$\type(X)=(t-\alpha)^\l$ where $\l$ has more than one part.

Let $r(n,q)$ denote the number of uncyclic matrices $X$ in $\M(n,q)$ with
$\type(X)=f^\l$ where $f$ is a monic irreducible polynomial whose
degree divides $n$. Let $r(n,q,d)$ denote the number of such
matrices $X$ where $\type(X)=f^\l$, and $f$ has degree~$d$ for a fixed divisor
$d$ of $n$.
Thus $r(n,q)=\sum_{d \mid n}r(n,q,d)$. Estimating the size
of $r(n,q,1)$, is an important step towards  estimating
$r(n,q)$, which, in turn, will help us bound $\unc(n,q)$.

\begin{lemma}\label{L:rnq1}
Let $r(n,q,1)$ denote the number of uncyclic matrices in $\M(n,q)$ that
are $\alpha$-potent for some $\alpha\in\F_q$. If $n\geq1$, then
$r(n,q,1)=c_0(n,q)q^{n^2-n-1}$ where
\[
  c_0(n,q):=q^2\left(1-\prod_{i=2}^n(1-q^{-i})\right).
\]
Moreover, $1+q^{-1}-q^{-3}\leq c_0(n,q)<1+q^{-1}+q^{-2}$ for $n\geq3$,
and $\lim_{q\to\infty} c_0(n,q)=1$.
\end{lemma}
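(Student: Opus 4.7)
My plan is to count by complementation: for each $\alpha\in\F_q$, subtract the number of cyclic $\alpha$-potent matrices from the total count of $\alpha$-potent matrices, then sum over $\alpha$. By Steinberg's theorem and the shift bijection $X\mapsto X+(\alpha-1)I$ noted in the text, there are exactly $q^{n^2-n}$ $\alpha$-potent matrices in $\M(n,q)$ for each $\alpha$. An $\alpha$-potent matrix $X$ is cyclic if and only if $\type(X)=(t-\alpha)^{(n)}$, so the cyclic $\alpha$-potent matrices form a single $\GL(n,q)$-orbit. Using (\ref{E:Orbit}) and the last row of Table~\ref{Tableclq} (with $k=1$, $\lambda_1=n$) to get $c((n),q)=q^n(1-q^{-1})$, this orbit has size $|\GL(n,q)|/c((n),q) = q^{n^2-n}\prod_{i=2}^n(1-q^{-i})$. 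Subtracting and summing over the $q$ values of $\alpha$ (which give disjoint sets, since $\alpha$ is determined by $c_X(t)$) yields
\[
r(n,q,1) = q^{n^2-n+1}\Bigl(1-\prod_{i=2}^n(1-q^{-i})\Bigr) = c_0(n,q)\,q^{n^2-n-1}.
\]

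For the bounds I would first observe monotonicity: $c_0(n,q)$ is nondecreasing in $n$, since each new factor $(1-q^{-(n+1)})<1$ further decreases $\prod_{i=2}^n(1-q^{-i})$. Thus the lower bound for $n\geq 3$ reduces to the case $n=3$, where a direct expansion gives
\[
c_0(3,q)=q^2\bigl(1-(1-q^{-2})(1-q^{-3})\bigr)=q^2(q^{-2}+q^{-3}-q^{-5})=1+q^{-1}-q^{-3},
\]
so equality holds at $n=3$. The limit $c_0(n,q)\to 1$ as $q\to\infty$ is then immediate from $1-\prod_{i=2}^n(1-q^{-i})=q^{-2}+O(q^{-3})$.

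The main obstacle is the upper bound, which is nearly sharp as $n\to\infty$ (numerically, $c_0(\infty,3)\approx 1.439$ against the bound $1+\tfrac13+\tfrac19\approx 1.444$). A naive union bound $1-\prod(1-q^{-i})\leq\sum q^{-i}$ is too weak: it produces $c_0(n,q)\leq 1+q^{-1}+\cdots+q^{-(n-2)}$, which exceeds $1+q^{-1}+q^{-2}$ as soon as $n\geq 5$. Instead I would use the telescoping identity
\[
1-\prod_{i=2}^n(1-q^{-i})=\sum_{i=2}^n q^{-i}\prod_{j=2}^{i-1}(1-q^{-j}),
\]
verified by a one-line induction on $n$. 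Since $\prod_{j=2}^{i-1}(1-q^{-j})\leq 1-q^{-2}$ for all $i\geq 3$ (strictly for $i\geq 4$), the right side is bounded above by $q^{-2}+(1-q^{-2})\sum_{i=3}^n q^{-i}$, and summing the resulting geometric tail $(1-q^{-2})\sum_{i=3}^\infty q^{-i}=(1+q^{-1})q^{-3}=q^{-3}+q^{-4}$ gives $1-\prod_{i=2}^n(1-q^{-i})<q^{-2}+q^{-3}+q^{-4}$ for $n\geq 3$ (the strict inequality holds at $n=3$ because $-q^{-3}<q^{-2}$, and for $n\geq 4$ because at least one factor in the partial product is strictly less than $1-q^{-2}$). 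Multiplying by $q^2$ yields $c_0(n,q)<1+q^{-1}+q^{-2}$.
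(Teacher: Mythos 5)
Your proposal is correct and follows essentially the same route as the paper: count $\alpha$-potent matrices via Steinberg's theorem, subtract the single cyclic orbit of size $|\GL(n,q)|/c((n),q)=q^{n^2-n}\prod_{i=2}^n(1-q^{-i})$, sum over $\alpha$, and then use monotonicity of $c_0(n,q)$ in $n$ with the exact value $c_0(3,q)=1+q^{-1}-q^{-3}$ for the lower bound. The only cosmetic difference is in the upper bound, where you use the telescoping identity $1-\prod_{i=2}^n(1-q^{-i})=\sum_{i=2}^n q^{-i}\prod_{j=2}^{i-1}(1-q^{-j})$ in place of the paper's peeling off of the factor $(1-q^{-2})$ followed by the Weierstrass inequality $\prod_{i\geq3}(1-q^{-i})>1-\sum_{i\geq3}q^{-i}$; both yield the identical intermediate bound $q^{-2}+q^{-3}+q^{-4}$ and hence the same conclusion.
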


\begin{proof} Since $|\U(n,q,\alpha)|$ is independent of $\alpha\in\F_q$,
it follows that $r(n,q,1)=q|\U(n,q,1)|$. Thus it remains to count the
uncyclic unipotent matrices. A cyclic unipotent matrix belongs to a
conjugacy class with type $(t-1)^{(n)}$,
and an uncyclic unipotent matrix $X$ has $\type(X)=(t-1)^{\l}$
for some $\l\neq(n)$. By (\ref{E:cq}), the centralizer of a cyclic unipotent
matrix has order $q^n(1-q^{-1})$. It follows, using the above mentioned theorem
of Steinberg, that
\[
  |\U(n,q,1)|=q^{n(n-1)}-
     \frac{q^{n^2}\prod_{i=1}^n(1-q^{-i})}{q^n(1-q^{-1})}
    = q^{n^2-n}\left(1-\prod_{i=2}^n(1-q^{-i})\right).
\]
The cardinality of the disjoint union $\bigcup_{\alpha\in\F_q}\U(n,q,\alpha)$
is thus
\[
  r(n,q,1)= q^{n^2-n+1}\left(1-\prod_{i=2}^n(1-q^{-i})\right)
            =c_0(n,q)q^{n^2-n-1}.
\]

It remains to estimate $c_0(n,q)$. Since $c_0(n,q)$ is an increasing
function of $n$, it follows that
$1+q^{-1}-q^{-3}=c_0(3,q)\leq c_0(n,q)<c_0(\infty,q)$ for $n\geq3$.
The following calculation shows that the limit 
\begin{equation}
  c_0(\infty,q)=1+q^{-1}+q^{-2}-q^{-5}-q^{-6}-q^{-7}-q^{-8}-q^{-9}
  +q^{-13}+q^{-14}+\cdots
\end{equation}
is finite for all $q$:
\begin{align*}
  c_0(\infty,q)&=q^2\left[1-(1-q^{-2})\prod_{i\geq3}(1-q^{-i})\right]
  <q^2\left[1-(1-q^{-2})\left(1-\sum_{i\geq3}q^{-i}\right)\right]\\
  &=q^2\left[1-(1-q^{-2})\left(1-\frac{q^{-3}}{1-q^{-1}}\right)\right]=1+q^{-1}+q^{-2}.
\end{align*}
Finally, $\lim_{q\to\infty}\left(1+q^{-1}-q^{-3}\right)=
\lim_{q\to\infty} \left(1+q^{-1}+q^{-2}\right) =1$
so $\lim_{q\to\infty} c_0(n,q)= 1$.
\end{proof}

\begin{proof}[Proof of Theorem~\ref{T:lowerb}]
By Lemma~\ref{L:rnq1} the number $r(n,q,1)$ of uncyclic matrices in $\M(n,q)$
with type $(t-\alpha)^\l$, for some $\alpha\in\F_q$ and $\l\neq(n)$,
is $q^{n^2-n-1}+q^{n^2-n-2}+\OO(q^{n^2-n-3})$.
We shall now show that the number of uncyclic matrices in $\M(n,q)$ with
type $(t-\alpha)^\l(t-\beta)^\mu$ where $\alpha\neq\beta$
is $\left(\frac {n-3}2\right) q^{n^2-n-2}+\OO(q^{n^2-n-3})$. These two
contributions give a lower bound for $\unc(n,q)$ approximately of the size
forecast in the preamble to this section.

It is easy to check using the values for $\unc(n,q)$ in Table~\ref{TableUnc}
that Theorem~\ref{T:lowerb} is true for $n=3,4$.
Assume henceforth that $n\geq5$.
We count the number of matrices $X\in\M(n,q)$ with
$\type(X)=(t-\alpha)^{(\lambda_1,1)}(t-\beta)^{(\mu_1,1)}$, for fixed elements
$\alpha\neq\beta$ in $\F_q$ and $\lambda_1\geq\mu_1\geq1$ such that
$n=\lambda_1+\mu_1+2$. It follows from Table~\ref{Tableclq} that
\begin{equation*}
  c((\lambda_1,1),q)=\left\lbrace
    \begin{array}{ll}
      q^{\lambda_1+3}(1-q^{-1})^2&\mbox{if $\lambda_1>1$,}\\
      q^{\lambda_1+3}(1-q^{-1})(1-q^{-2})\quad&\mbox{if $\lambda_1=1$.}\\
    \end{array}\right.
\end{equation*}
Since $\l_1+3+\mu_1+3=n+4$, it follows from~(\ref{E:Orbit}) that $X$ lies
in a $\GL(n,q)$-orbit of size
\[
  \frac{q^{n^2-n-4}\omega(n,q)}{(1-q^{-1})^4},\qquad
  \frac{q^{n^2-n-4}\omega(n,q)}{(1-q^{-1})^3(1-q^{-2})},\quad\text{or}\quad
  \frac{q^{n^2-n-4}\omega(n,q)}{(1-q^{-1})^2(1-q^{-2})^2}
\]
if $\l_1\geq\mu_1>1$, $\l_1>\mu_1=1$, or $\l_1=\mu_1=1$, respectively.

How many $\GL(n,q)$-orbits arise if we vary
$\alpha\neq\beta$ and $\lambda_1\geq\mu_1\geq1$?
To answer this question we consider three cases: (a) $\lambda_1>\mu_1>1$,
(b) $\lambda_1=\mu_1>1$, and (c) $\lambda_1>\mu_1=1$. (As $n\geq5$, the case
$\lambda_1=\mu_1=1$ does not arise.)
(a) If $\lambda_1>\mu_1>1$, then $n-2-\mu_1>\mu_1$ and so the values for $\mu_1$
are $2,3,\dots,\lceil\frac{n-2}{2}\rceil-1$. Thus there are $q(q-1)$
choices for $(\alpha,\beta)$ and
$\lceil\frac{n-2}{2}\rceil-2=\lceil\frac{n-6}{2}\rceil$ choices for
$\mu_1$ giving $q(q-1)\lceil\frac{n-6}{2}\rceil$ orbits.
(b) If $\lambda_1=\mu_1>1$, then $n$ is even, $\l_1=\mu_1=\frac{n-2}2$, and there
are $q(q-1)/2$
orbits as swapping $\alpha$ and $\beta$ gives a matrix in the same orbit.
(c) If $\lambda_1>\mu_1=1$, then $\l_1=n-3$ and there are $q(q-1)$ orbits.
The number of orbits in cases~(a) and~(b) combined equals
$q(q-1)\left(\frac{n-5}{2}\right)$ because if $n$ is odd 
then $\lceil\frac{n-6}{2}\rceil=\frac{n-5}{2}$, 
while if $n$ is even then $\lceil\frac{n-6}{2}\rceil+\frac{1}{2}$ also
equals $\frac{n-5}{2}$.
Thus the total number of matrices $X$ in these three cases is:
\begin{align*}
  &q(q-1)\left(\frac{n-5}{2}\right)
  \frac{q^{n^2-n-4}\omega(n,q)}{(1-q^{-1})^4}
  +q(q-1)\frac{q^{n^2-n-4}\omega(n,q)}{(1-q^{-1})^3(1-q^{-2})}\\
  &=\frac{q^{n^2-n-2}\omega(n,q)}{(1-q^{-1})^3}
  \left[\frac{n-5}{2}+\frac{1}{1+q^{-1}}\right].
\end{align*}
By Lemma~\ref{L:omega}, $\omega(n,q)>(1-q^{-1})^2$, and also
$\frac{1}{1-q^{-1}}>1+q^{-1}$. As $n\geq5$ the above expression is greater than
\begin{equation}\label{E:partofbound}
  q^{n^2-n-2}(1+q^{-1})\left[\frac{n-5}{2}+\frac{1}{1+q^{-1}}\right]
  > q^{n^2-n-2}\left(\frac{n-3}{2}\right).
\end{equation}

The number of uncyclic matrices of type $(t-\alpha)^\l$ for some $\alpha$
is by Lemma~\ref{L:rnq1} at least
\begin{equation}\label{E:alpha}
  q^{n^2-n-1}\left(1+q^{-1}-q^{-3}\right).
\end{equation}
Adding the lower bound~(\ref{E:alpha}) to the lower bound (\ref{E:partofbound})
for the number of uncyclic matrices of type
$(t-\alpha)^{(\lambda_1,1)}(t-\beta)^{(\mu_1,1)}$ gives the lower bound
\[
  \unc(n,q)>
  q^{n^2-n-1}\left(1+\left(\frac{n-1}{2}\right)q^{-1}-q^{-3}\right)
\]
of Theorem~\ref{T:lowerb}.
\end{proof}

\section{An upper bound for $\unc(n,q)$ where $q>2$}\label{S:upperb}

It surprised the authors that mathematical induction, as employed in the proof
of Theorem~\ref{T:upperb} below, could be used successfully to
find an upper bound for $\unc(n,q)$ of the form postulated in
Conjecture~\ref{C}.

First we consider uncyclic matrices involving a unique irreducible $f$. 
Let $\Irr(r,q)$ denote the set of monic degree-$r$ irreducible polynomials over
$\F_q$. Recall that $N(r,q):=|\Irr(r,q)|$, and that
$\omega(n,q):=\prod_{i=1}^n (1-q^{-i})=q^{-n^2}|\GL(n,q)|$.


\begin{lemma}\label{L:rnq}
Let $r(n,q)$ denote the cardinality of the set
\[
  \{X\in\M(n,q)\mid X\textup{ is uncyclic, and }
  c_X(t)=f^{n/d}\textup{ for some $d|n$, and some $f\in\Irr(d,q)$}\}
\]
and set $c_1(n,q):=r(n,q)/q^{n^2-n-1}$. If $n\geq2$, then
$c_1(n,q)<c^*(q)$ where
\[
  c^*(q):=c_0(\infty,q)+q\frac{\omega(4,q)c_0(\infty,q^2)}{\omega(\infty,q^2)}
  \left(q\log(1-q^{-2})-\log(1-q^{-1})\right).
\]
Moreover, $1+q^{-1}-q^{-3}<c^*(q)<1+\frac{3}{2}q^{-1}+\frac{2}{3}q^{-2}$
and $\lim_{q\to\infty}c^*(q)=1$.
\end{lemma}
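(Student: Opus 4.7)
\emph{Proof plan.} My approach is to evaluate $r(n,q)$ by orbit enumeration, extract a closed form for the inner partition sum using Steinberg's theorem, and then bound the tail over divisors $d\geq 2$ via a cyclotomic identity for $N(d,q)$. By (\ref{E:Orbit}) and the definition of $a_m$ in (\ref{E:Aa}),
\[
r(n,q) \;=\; |\GL(n,q)| \sum_{d\,\mid\,n} N(d,q)\, a_{n/d}(q^d),
\]
where $a_m(Q):=\sum_{\lambda\vdash m,\,\lambda\ne(m)} 1/c(\lambda,Q)$. Counting unipotent matrices in $\GL(m,Q)$ two ways --- Steinberg gives $Q^{m(m-1)}$, while summing orbit sizes via (\ref{E:Orbit}) over $\lambda\vdash m$ gives $|\GL(m,Q)|\sum_{\lambda\vdash m} 1/c(\lambda,Q)$ --- and isolating the cyclic partition $(m)$ with centralizer order $Q^m(1-Q^{-1})$ yields the closed form
\[
a_m(Q) \;=\; \frac{1}{Q^m}\!\left(\frac{1}{\omega(m,Q)} - \frac{1}{1-Q^{-1}}\right).
\]

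The $d=1$ term reconstructs exactly the quantity $r(n,q,1)=c_0(n,q)\,q^{n^2-n-1}$ from Lemma~\ref{L:rnq1}, and $c_0(n,q)<c_0(\infty,q)$. For $d\geq 2$ I would replace $\omega(n/d,q^d)$ by the smaller $\omega(\infty,q^d)$ and use the algebraic identity $1-\omega(\infty,Q)/(1-Q^{-1})=c_0(\infty,Q)/Q^2$ to obtain
\[
a_{n/d}(q^d) \;\leq\; \frac{c_0(\infty,q^d)}{q^{n+2d}\,\omega(\infty,q^d)} \;\leq\; \frac{c_0(\infty,q^2)}{q^{n+2d}\,\omega(\infty,q^2)},
\]
the second inequality using that $c_0(\infty,Q)$ and $1/\omega(\infty,Q)$ are both decreasing in $Q$ and $d\geq 2$.

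The key quantitative step is bounding $\sum_{d\geq 2} N(d,q)\,q^{-2d}$. From the classical identity $\prod_{d\geq 1}(1-x^d)^{-N(d,q)}=(1-qx)^{-1}$, taking logarithms, evaluating at $x=q^{-2}$, and subtracting off the $d=1$ term (recalling $N(1,q)=q$) gives
\[
\sum_{d\geq 2} N(d,q)\,\bigl(-\log(1-q^{-2d})\bigr) \;=\; q\log(1-q^{-2})-\log(1-q^{-1}),
\]
and since $-\log(1-u)>u$ for $0<u<1$, this same expression upper-bounds $\sum_{d\geq 2} N(d,q)\,q^{-2d}$.

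Combining these estimates, using $|\GL(n,q)|/q^{n^2-n-1}=q^{n+1}\omega(n,q)$ and bounding $\omega(n,q)\leq\omega(4,q)$ for $n\geq 4$ (for $n\in\{2,3\}$, the only divisor $d\geq 2$ of $n$ satisfies $n/d=1$, so $a_{n/d}(q^d)=0$) yields $c_1(n,q)\leq c^*(q)$. The inequality is strict: for $n\geq 4$ from $c_0(n,q)<c_0(\infty,q)$, and for $n\in\{2,3\}$ from the positivity of the correction term, which in turn follows from $q\log(1-q^{-2})-\log(1-q^{-1})=\tfrac{1}{2}q^{-2}-\tfrac{1}{6}q^{-3}+\cdots>0$. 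The lower bound $c^*(q)>1+q^{-1}-q^{-3}$ is then immediate from $c^*(q)>c_0(\infty,q)\geq c_0(3,q)=1+q^{-1}-q^{-3}$, and $\lim_{q\to\infty} c^*(q)=1$ follows from the leading-order expansions ($c_0(\infty,q)\to 1$ and the correction term is $O(q^{-1})$). I expect the main obstacle to be the sharp upper bound $c^*(q)<1+\tfrac{3}{2}q^{-1}+\tfrac{2}{3}q^{-2}$, which requires careful tracking of leading coefficients in the series expansions of each of the four factors $\omega(4,q)$, $c_0(\infty,q^2)$, $1/\omega(\infty,q^2)$, and $q\log(1-q^{-2})-\log(1-q^{-1})$, and probably a direct check at the smallest few values of $q$ where the correction is largest.
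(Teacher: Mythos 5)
Your skeleton is the same as the paper's: split off the $d=1$ contribution (which is exactly $c_0(n,q)$ from Lemma~\ref{L:rnq1}), bound the coefficient of each $d\geq2$ term by $\gamma(q)=\omega(4,q)c_0(\infty,q^2)/\omega(\infty,q^2)$, and bound $\sum_{d\geq2}N(d,q)q^{-2d}$ by $q\log(1-q^{-2})-\log(1-q^{-1})$. Two of your intermediate derivations differ from the paper's, and both are correct and arguably cleaner. First, you get $a_m(Q)=Q^{-m}\bigl(\omega(m,Q)^{-1}-(1-Q^{-1})^{-1}\bigr)$ directly from Steinberg's count of unipotents, whereas the paper reaches the same information by relating $r(n,q,d)$ to $r(n/d,q^d,1)$ and quoting Lemma~\ref{L:rnq1}; these are equivalent, and your identity $1-\omega(\infty,Q)/(1-Q^{-1})=c_0(\infty,Q)/Q^2$ is exactly the definition of $c_0$. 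Second, for the tail you use the cyclotomic identity $\prod_{d\geq1}(1-x^d)^{-N(d,q)}=(1-qx)^{-1}$ at $x=q^{-2}$ together with $-\log(1-u)>u$, while the paper uses the elementary bound $N(d,q)\leq(q^d-q)/d$ and resums; both land on the identical expression, but your route explains where the logarithms come from. Two small points to tidy: the monotonicity of $c_0(\infty,Q)$ in $Q$ is not immediate from the crude two-sided bounds (they are not tight enough to compare $Q$ with $Q+1$), though it is easily checked in the only case you need, namely $c_0(\infty,q^d)\leq c_0(\infty,q^2)$ for $d\geq3$, where $q^d\geq 2q^2$; and your observation that $n\in\{2,3\}$ contribute no $d\geq2$ terms (since $a_1=0$) is exactly how the paper disposes of those cases.

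The one genuinely unexecuted step is the inequality $c^*(q)<1+\frac{3}{2}q^{-1}+\frac{2}{3}q^{-2}$, which you correctly flag as the delicate part but only sketch. Be aware that this bound is not decorative: it is used later to establish (\ref{E:rho4}) in Theorem~\ref{T:upperb}. The paper's execution is: prove $q\log(1-q^{-2})-\log(1-q^{-1})<\tfrac{1}{2}q^{-2}$ (by a direct numerical check at $q=2$ and by truncated $\log$ series for $q\geq3$), combine with $c_0(\infty,q)\leq 1+q^{-1}+q^{-2}$, $\omega(4,q)<(1-q^{-1})(1-q^{-2})$ and $\omega(\infty,q^2)^{-1}<1+q^{-2}+3q^{-4}$, expand the resulting polynomial in $q^{-1}$, and apply Lemma~\ref{L:positive} with $q_0=3$. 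Crucially, this generic argument fails at $q=2$, and the paper instead invokes the sharper numerical estimate $c^*(2)<1.83<\tfrac{23}{12}$ from Lemma~\ref{L:sharp}; your plan should anticipate that $q=2$ needs such a separate computation rather than just "the smallest few values" of a series argument.
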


\begin{proof}
It follows from the remarks preceding Lemma~\ref{L:rnq1} that
\begin{equation}\label{E:sum}
  r(n,q)=r(n,q,1)+\sum_{\properdivisors} r(n,q,d)
\end{equation}
because $r(n,q,n)=0$.
Thus $r(n,q,1)\leq r(n,q)$ and so, by Lemma~\ref{L:rnq1}, 
$c_0(n,q)\leq c_1(n,q)$ with equality
if and only if $n$ is prime. It follows from Lemma~\ref{L:rnq1} that
$1+q^{-1}-q^{-3}=c_0(3,q)$ $< c_0(\infty,q)<c^*(q)$.
It remains to prove that $c_1(n,q)<c^*(q)$ for $n\geq2$ and that
$c^*(q)<$ $1+\frac{3}{2}q^{-1}+\frac{2}{3}q^{-2}$. The first inequality is
true when $n=2,3$ by Lemma~\ref{L:rnq1} as
\[
  c_1(2,q)=c_0(2,q)=1<c_1(3,q)=c_0(3,q)=1+q^{-1}-q^{-3}
  < c_0(\infty,q)<c^*(q).
\]
Assume henceforth that $n\geq4$.

We digress to generalize the formula for $r(n,q,1)=c_0(n,q)q^{n^2-n-1}$
in Lemma~\ref{L:rnq1} to $r(n,q,d)$.
It follows from (\ref{E:cq}) and (\ref{E:Orbit}) that
\[
  r(n,q,1)=N(1,q)\sum_{\genfrac{}{}{0pt}{}{\lambda\vdash n}{\lambda\neq(n)}}
             \frac{|\GL(n,q)|}{c(\lambda,q)}\quad\text{and}\quad
  r(n,q,d)=
  N(d,q)\sum_{\genfrac{}{}{0pt}{}{\lambda\vdash \frac{n}{d}}{\lambda\neq(\frac{n}{d})}}
             \frac{|\GL(n,q)|}{c(\lambda,q^d)}
\]
where the sums are over all partitions with more than one
part. Note that the first sum counts the elements of the disjoint union
$\bigcup_{\alpha\in\F_q}\U(n,q,\alpha)$.
Relating these formulas gives
\begin{align*}
  r(n,q,d)&=\frac{N(d,q)|\GL(n,q)|}{N(1,q^d)|\GL(\frac nd,q^d)|}
               \,N(1,q^d)\sum_{\genfrac{}{}{0pt}{}{\lambda\vdash \frac{n}{d}}{\lambda\neq(\frac{n}{d})}}
               \frac{|\GL(\frac {n}{d},q^d)|}{c(\lambda,q^d)}\\
            &=\frac{N(d,q)|\GL(n,q)|}{N(1,q^d)|\GL(\frac{n}{d},q^d)|}
               r({\frac{n}{d}},q^d,1).
\end{align*}

By Lemma~\ref{L:rnq1} we have $r(n,q,1)=c_0(n,q)q^{n^2-n-1}$, and so
\begin{equation}\label{E:rnqd}
  \begin{aligned}
    r(n,q,d)&=
    \frac{q^{-d}N(d,q)q^{n^2}\omega(n,q)}{(q^d)^{(n/d)^2}\omega(\frac{n}{d},q^d)}
                 c_0(\frac{n}{d},q^d)(q^d)^{(n/d)^2-(n/d)-1}\\
            &=\frac{\omega(n,q)c_0(\frac{n}{d},q^d)}{\omega(\frac{n}{d},q^d)}
                \left[q^{-d}N(d,q)\right]q^{n^2-n-d}
  \end{aligned}
\end{equation}
Since $n\geq4$ and $1<d<n$, each of $d$ and $n/d$ is at least 2, and so we have
\begin{equation}\label{E:gamma}
  \frac{\omega(n,q)c_0({\frac{n}{d}},q^d)}{\omega(\frac{n}{d},q^d)}
  <\frac{\omega(4,q)c_0(\infty,q^2)}{\omega(\infty,q^2)}=:\gamma(q).
\end{equation}
It follows from (\ref{E:sum}),(\ref{E:rnqd}) and (\ref{E:gamma}) that
\[
  r(n,q)=r(n,q,1)+\sum_{\properdivisors} r(n,q,d)
    \leq c_0(\infty,q)q^{n^2-n-1}
     +\gamma(q)\left(\sum_{\properdivisors} q^{-d}N(d,q) q^{n^2-n-d}\right).
\]
This proves that $r(n,q)\leq K(n,q)q^{n^2-n-1}$ where
\[
  K(n,q):=c_0(\infty,q)+q\gamma(q)\sum_{\properdivisors} q^{-2d}N(d,q).
\]
Thus $c_1(n,q)\leq K(n,q)$, and our goal now is to prove that
$K(n,q)<c^*(q)$ for $n\geq4$.

The bound $N(d,q)\leq(q^d-q)/d$, which holds for $d\geq2$, gives
\begin{equation}\label{E:estimate}
  \sum_{\properdivisors} q^{-2d}N(d,q)
  \leq\sum_{d\geq2} \frac{q^{-d}}{d}-q\sum_{d\geq2}\frac{q^{-2d}}{d}
  =\sum_{d\geq1} \frac{q^{-d}}{d}-q\sum_{d\geq1}\frac{q^{-2d}}{d}.
\end{equation}
The series $\sum_{d\geq1}\frac{x^{d}}{d}$ converges absolutely for $|x|<1$
to $-\log(1-x)$. Thus
\[
  K(n,q)<c_0(\infty,q)+q\gamma(q)\left(q\log(1-q^{-2})-\log(1-q^{-1})\right)
  =c^*(q),
\]
so $c_1(n,q)\leq K(n,q)<c^*(q)$ for $n\geq4$. Finally we must show that
$c^*(q)<1+\frac{3}{2}q^{-1}+\frac{2}{3}q^{-2}$.

We begin by showing
$q\log(1-q^{-2})-\log(1-q^{-1})<q^{-2}/2$ for $q\geq2$.
This is true when $q=2$ because $2\log(3/4)-\log(1/2)<0.125$.
Suppose now that $q\geq3$. If $0\leq x<1$, then elementary calculus gives
\[
  x+\frac{x^2}2\leq-\log(1-x)
  \leq x+\frac{x^2}2+\sum_{d\geq3}\frac{x^d}3=x+\frac{x^2}2+\frac{x^3}{3(1-x)}.
\]
If $x=q^{-2}$, then $q^{-2}+\frac{q^{-4}}{2}\leq-\log(1-q^{-2})$ and
$q\log(1-q^{-2})\leq -q^{-1}-\frac{q^{-3}}{2}$.
If $x=q^{-1}$, then $-\log(1-q^{-1})<q^{-1}+\frac{q^{-2}}2+\frac{q^{-3}}{3(1-q^{-1})}\leq
q^{-1}+\frac{q^{-2}}2+\frac{q^{-3}}{2}$ for $q\geq3$. Adding shows
\begin{equation}\label{E:loglog}
  q\log(1-q^{-2})-\log(1-q^{-1})<\frac{q^{-2}}{2}\qquad\text{for $q\geq3$}.
\end{equation}

By Lemma~\ref{L:sharp} below, $c^*(2)<1.83<\frac{23}{12}$, and hence
the bound $c^*(q)<1+\frac{3}{2}q^{-1}+\frac{2}{3}q^{-2}$ holds when $q=2$. 
Assume henceforth that $q\geq3$.
Lemma~\ref{L:rnq1} gives $c_0(\infty,q)\leq 1+q^{-1}+q^{-2}$,
and hence $c_0(\infty,q^2)\leq 1+q^{-2}+q^{-4}$.
Lemma~\ref{L:omega} implies $\omega(\infty,q^2)>1-q^{-2}-q^{-4}$, and 
Lemma~\ref{L:positive} may be used to show that
$\omega(\infty,q^2)^{-1}<1+q^{-2}+3q^{-4}$ for $q\geq3$. The inequalities
$\omega(4,q)<(1-q^{-1})(1-q^{-2})$ and (\ref{E:loglog}) give:
\def\+{\kern-1.5pt+\kern-1.5pt}
\def\-{\kern-1.5pt-\kern-1.5pt}
\begin{align*}
  c^*(q)&<(1+q^{-1}+q^{-2})+(1-q^{-1})(1-q^{-2})(1+q^{-2}+q^{-4})(1+q^{-2}+3q^{-4})
  \frac{q^{-1}}{2}\\
  &=1+\frac{1}{2}\left(3q^{-1}\+q^{-2}\+q^{-3}\-q^{-4}\+3q^{-5}\-3q^{-6}\-q^{-7}\+q^{-8}\-q^{-9}\+q^{-10}\-3q^{-11}\+3q^{-12}\right)\\
   &\leq 1+\frac{3}{2}q^{-1}+\frac{2}{3}q^{-2},
\end{align*}
where the final inequality follows from Lemma~\ref{L:positive} with $q_0=3$.
As $q$ approaches infinity, the established lower and upper bounds
for $c^*(q)$ both approach~1. Thus $\lim_{q\to\infty} c^*(q)=1$ as claimed.
This completes the proof.
\end{proof}

The proof of our main theorem requires sharper bounds for $c^*(2)$ and $c^*(3)$
than those provided by Lemma~\ref{L:rnq}.

\begin{lemma}\label{L:sharp}
For $m\geq2$, $q\geq2$, we have 
\begin{equation}\label{E:OmegaEst}
  \omega(\infty,q)>\omega(m-1,q)\left(1-\frac{q^{-m}}{1-q^{-1}}\right)
\end{equation}
and this bound may be used to show that
$c^*(2)<1.83$ and $c^*(3)<1.56$.
\end{lemma}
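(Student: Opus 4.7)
The plan is to prove the product inequality (\ref{E:OmegaEst}) first, then combine it with careful numerical bookkeeping to derive the two specific bounds on $c^*(q)$.

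For (\ref{E:OmegaEst}), split the infinite product as $\omega(\infty,q) = \omega(m-1,q)\prod_{i\ge m}(1-q^{-i})$, so it suffices to show
\[
\prod_{i\ge m}(1-q^{-i}) \;>\; 1 - \sum_{i\ge m}q^{-i} \;=\; 1 - \frac{q^{-m}}{1-q^{-1}}.
\]
This is the Weierstrass product inequality: for $0\le a_i<1$, an easy induction from $(1-a)(1-b)=1-a-b+ab \ge 1-a-b$ gives $\prod_{i=1}^{k}(1-a_i) \ge 1 - \sum_{i=1}^{k}a_i$, strict as soon as two of the $a_i$ are positive. Strictness persists in the limit provided $\sum a_i<1$, which here becomes $q^{m-1}(q-1)>1$ and holds for all $q\ge 2$, $m\ge 2$.

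For the two numerical bounds I start from the defining formula
\[
c^*(q) = c_0(\infty,q) + q\,\frac{\omega(4,q)\,c_0(\infty,q^2)}{\omega(\infty,q^2)}\bigl(q\log(1-q^{-2})-\log(1-q^{-1})\bigr)
\]
and observe that the parenthesised factor equals $\log\bigl((1-q^{-2})^q/(1-q^{-1})\bigr)$, which is small and positive; the correction term is therefore a positive quantity that I bound from above by estimating every factor from above, except $\omega(\infty,q^2)$, which I estimate from below via (\ref{E:OmegaEst}) applied with $q^2$ in place of $q$ (taking $m=4$ when $q=2$, and $m=3$ when $q=3$). Upper bounds on $c_0(\infty,q)$ and $c_0(\infty,q^2)$ follow the truncation-plus-Weierstrass argument of Lemma~\ref{L:rnq1}; however, for $q=2$ the crude bound $c_0(\infty,2) < 1+2^{-1}+2^{-2} = 1.75$ is already too close to $1.83$, so I retain more initial factors of $\prod_{i\ge 2}(1-2^{-i})$ before invoking Weierstrass, which suffices to push $c_0(\infty,2)$ below $1.69$.

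Assembling the ingredients, for $q=2$ one obtains $\omega(\infty,4) > (2835/4096)(191/192) > 0.688$, the sharpened $c_0(\infty,2) < 1.69$, the crude $c_0(\infty,4) < 1.31$, $\omega(4,2) = 315/1024$, and $2\log(3/4) - \log(1/2) = \log(9/8) < 0.118$; multiplying these out gives $c^*(2) < 1.83$. For $q=3$ only the crude bounds are needed: one finds $\omega(\infty,9) > (640/729)(647/648) > 0.876$, $c_0(\infty,3) < 13/9$, $c_0(\infty,9) < 91/81$, $\omega(4,3) = 33280/59049$, and $3\log(8/9) - \log(2/3) = \log(256/243) < 0.053$, combining to give $c^*(3) < 1.56$. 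The main obstacle is arithmetic tightness in the $q=2$ case: every ingredient there has to be estimated fairly sharply, and both the sharpening of $c_0(\infty,2)$ and the lower bound on $\omega(\infty,4)$ coming from (\ref{E:OmegaEst}) are essential to squeeze in under $1.83$.
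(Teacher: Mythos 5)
Your proof is correct and follows essentially the same route as the paper: the Weierstrass product inequality $\prod(1-a_i)\geq 1-\sum a_i$ applied to the tail $\prod_{i\geq m}(1-q^{-i})$ gives (\ref{E:OmegaEst}), and the numerical bounds are then assembled from lower bounds on $\omega(\infty,q^2)$ and upper bounds on $c_0(\infty,q)$, $c_0(\infty,q^2)$ exactly as in the paper (which takes $m=6$ at $q=2,4$ and $m=4$ at $q=3,9$, and sharpens $c_0(\infty,3)$ to $1.439$ where you use the crude $13/9$ --- immaterial differences). One numerical slip: $c_0(\infty,4)<1.31$ is false, since $c_0(\infty,4)\approx 1.3112$ and the crude bound is $1+4^{-1}+4^{-2}=1.3125$; however, substituting $1.3125$ into your product still gives $c^*(2)<1.829<1.83$, so the conclusion is unaffected.
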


\begin{proof}
The bound $\prod_{i=m}^\infty(1-q^{-i})>1-\sum_{i=m}^\infty q^{-i}$ gives rise
to the lower bound (\ref{E:OmegaEst}) for $\omega(\infty,q)$.
This, in turn, gives an upper bound for $c_0(\infty,q)$
(see Lemma~\ref{L:rnq1} for a definition).
Setting $m=6$ and $q=2,4$ in (\ref{E:OmegaEst}) gives
\[
  \omega(\infty,2)>0.28869,\quad\omega(\infty,4)>0.688,\quad c_0(\infty,2)<1.691,
  \quad\text{and}\quad c_0(\infty,4)<1.312.
\]
Similarly, setting $m=4$ and $q=3,9$ in (\ref{E:OmegaEst}) gives
\[
  \omega(\infty,3)>0.560,\quad\omega(\infty,9)>0.876,\quad c_0(\infty,3)<1.439,
  \quad\text{and}\quad c_0(\infty,9)<1.124.
\]
These inequalities give $c^*(2)<1.83$ and $c^*(3)<1.56$.
\end{proof}

\begin{theorem}\label{T:upperb}
If $n\geq1$, then $\unc(n,3)<(1.56)3^{n^2-n-1}\left(1.59\right)^n$ and
\begin{equation}\label{E:ub}
  \unc(n,q)< c^*(q) q^{n^2-n-1}\left(1+q^{-1}+2q^{-2}\right)^n
\end{equation}
for $q\geq4$, where $c^*(q)$ is defined in \textup{Lemma~\ref{L:rnq}}, and
satisfies $1<c^*(q)<1.56$ for $q\geq3$.
\end{theorem}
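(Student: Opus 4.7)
My plan is to prove the theorem by strong induction on $n$. The structural dichotomy I will exploit is that every uncyclic matrix $X\in\M(n,q)$ either has prime-power characteristic polynomial $c_X(t)=f^{n/d(f)}$ for a single irreducible $f$ (counted by $r(n,q)$), or else has at least two distinct primary components, in which case isolating any one primary component $V(f)$ yields a direct sum decomposition $V=V(f)\oplus V'$ and a splitting $X=X_1\oplus X_2$ with both summands uncyclic. The base cases $n\in\{1,2,3\}$ are immediate: $\unc(1,q)=0$, and for $n\in\{2,3\}$ dimension constraints force $\unc(n,q)=r(n,q)<c^*(q)q^{n^2-n-1}$, which is smaller than the target bound $c^*(q)q^{n^2-n-1}\alpha^n$ (with $\alpha:=1+q^{-1}+2q^{-2}\ge 1$).

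For $n\ge 4$, the first step is to establish the recursive upper bound
\begin{equation*}
\unc(n,q)\le r(n,q)+\sum_{m=2}^{n-2}D(n,m,q)\,r(m,q)\,\unc(n-m,q),
\end{equation*}
where $D(n,m,q):=|\GL(n,q)|/(|\GL(m,q)||\GL(n-m,q)|)=q^{2m(n-m)}\omega(n,q)/(\omega(m,q)\omega(n-m,q))$ counts ordered complementary pairs of subspaces of dimensions $m$ and $n-m$. To derive it, for each uncyclic $X$ with at least two distinct irreducible factors and for each such factor $f$, the primary decomposition yields an ordered quadruple $(V_1,V_2,X_1,X_2)$ with $V_1=V(f)$, $X_1$ uncyclic of prime-power type $f^\nu$, and $X_2$ uncyclic with $f\nmid c_{X_2}$; summing over all such quadruples counts each case-(b) matrix at least once, and dropping the coprimality constraint on $X_2$ only inflates the count. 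The $m=n$ contribution is exactly $r(n,q)$ (displayed separately), and the $m\in\{1,n-1\}$ terms vanish since $r(1,q)=\unc(1,q)=0$.

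Next, substituting the induction hypothesis $\unc(n-m,q)\le c^*(q)q^{(n-m)^2-(n-m)-1}\alpha^{n-m}$ and the bound $r(m,q)<c^*(q)q^{m^2-m-1}$ from Lemma~\ref{L:rnq} into the recurrence, the $q$-exponents collapse cleanly to $q^{n^2-n-2}$, and after dividing through by $c^*(q)q^{n^2-n-1}$ the inductive step reduces to the scalar inequality
\begin{equation*}
c^*(q)\,q^{-1}\,\omega(n,q)\sum_{m=2}^{n-2}\frac{\alpha^{n-m}}{\omega(m,q)\,\omega(n-m,q)}\;\le\;\alpha^n-1.
\end{equation*}
To verify this I would bound $\omega(n,q)/(\omega(m,q)\omega(n-m,q))\le 1/\omega(\infty,q)$ by pairing $\omega(n,q)/\omega(n-m,q)\le 1$ with $1/\omega(m,q)\le 1/\omega(\infty,q)$ (Lemma~\ref{L:omega}), sum the geometric progression $\sum_{m=2}^{n-2}\alpha^{n-m}=(\alpha^{n-1}-\alpha^2)/(\alpha-1)$, and invoke the elementary identity $\alpha^n-1\ge\alpha(\alpha^{n-1}-\alpha^2)$ (equivalently $\alpha^3\ge 1$) for $\alpha\ge 1$, $n\ge 4$. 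These steps reduce matters to the $n$-independent condition $c^*(q)\le(1+2q^{-1})\alpha\,\omega(\infty,q)=(1+2q^{-1})(1+q^{-1}+2q^{-2})\omega(\infty,q)$, which for $q\ge 4$ is checkable symbolically via Lemma~\ref{L:positive} using $c^*(q)<1+\tfrac{3}{2}q^{-1}+\tfrac{2}{3}q^{-2}$ and $\omega(\infty,q)>1-q^{-1}-q^{-2}$.

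The main obstacle will be the extreme tightness of this reduced inequality at small $q$: at $q=4$ the two sides agree to order $q^{-1}$ (a direct numerical check gives LHS $\lesssim 1.417$ versus RHS $\gtrsim 1.419$), so the pairing used to bound the $\omega$-ratio is essential---any cruder packaging (for instance $\omega(m,q)\omega(n-m,q)\ge\omega(\infty,q)^2$) would destroy the margin. At $q=3$, the generic $\alpha=14/9\approx 1.556$ is genuinely insufficient to absorb the slack, so I would replace $\alpha$ by $1.59$ and use the sharper estimates $c^*(3)<1.56$ and $\omega(\infty,3)>0.56$ from Lemma~\ref{L:sharp}; the constant $1.56$ appearing out front in the $q=3$ bound of the theorem is precisely the uniform bound on $c^*(3)$ carried through the inductive reduction.
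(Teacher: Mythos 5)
Your proposal is correct and takes essentially the same route as the paper: the same splitting of an uncyclic matrix into a prime-power-type primary summand and an uncyclic complement, the same bounds from Lemmas~\ref{L:rnq}, \ref{L:omega} and~\ref{L:sharp}, and your final $n$-independent condition $c^*(q)\leq q\alpha(\alpha-1)\omega(\infty,q)$ with $\alpha=1+q^{-1}+2q^{-2}$ reduces to precisely the paper's inequality~(\ref{E:rho4}). The only cosmetic differences are that the paper restricts the summation to $k\leq\lfloor n/2\rfloor$ by canonically choosing a primary component of dimension at most $n/2$ (your fuller range $2\leq m\leq n-2$ yields the same geometric-sum bound), and that the paper first closes the induction with the exact root $\rho(q)$ of $\rho(\rho-1)=c^*/(q\omega(\infty,q))$ from~(\ref{E:ind}) and then proves $\rho(q)<\alpha$, whereas you verify the equivalent quadratic inequality directly at the target value.
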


\begin{proof}
Our proof has two parts. First, we use induction on $n$ and
a geometric argument to prove $\unc(n,q)\leq c^*(q)q^{n^2-n-1}\rho(q)^n$ for
$n\geq1$ and $q\geq2$, where
\begin{equation}\label{E:ind}
  \rho(q):=\frac{1+\sqrt{1+\frac{4c^*(q)}{q\omega(\infty,q)}}}{2}.
\end{equation}
Second, we prove that $\rho(3)<1.59$, and
$\rho(q)<1+q^{-1}+2q^{-2}$ for $q\geq4$.

It follows from the definition (\ref{E:ind}) that $\rho(q)>1$ for all $q\geq2$.
A simple calculation shows that $\unc(n,q)\leq c^*(q)q^{n^2-n-1}\rho(q)^n$
is true for $n=1,2$ and all $q$. Consider the proof for $n=3$.
By Table~\ref{TableUnc}, $\unc(3,q)=q^5+q^4-q^2$ and so
the inequality to be proved is:
\[
q^5\left(1+q^{-1}-q^{-3}\right)\leq c^*(q)q^5\rho(q)^3. 
\]
Now by Lemma~\ref{L:rnq}, $1+q^{-1}-q^{-3}<c^*(q)$, 
and as $\rho(q)>1$ the inequality above holds for all $q$.
Assume henceforth that $n\geq4$.

By definition, there are precisely $r(n,q)$ uncyclic matrices
$X\in\M(n,q)$ for which $c_X(t)$ is a power of some irreducible polynomial.
We shall now over-estimate the number of uncyclic $X$ for which $c_X(t)$
is {\it not} a power of a single irreducible.

We impose an arbitrary total ordering on the (finite number of) irreducible 
polynomials over $\F_q$ of degree at most $n$.
For each uncyclic matrix $X$ such that $c_X(t)$
is not a power of an irreducible, there exists at least one irreducible polynomial $f$ such that, 
if $f^{\nu(f)}$ is the highest power of $f$ dividing $c_X(t)$, 
then $0<d(f)\nu(f)\leq n/2$.
We choose the first irreducible $f$ in the total ordering with this property.
Write $V=U\oplus W$, where $U=\ker f(X)^{\nu(f)}$ is the $f$-primary component
and $W=\im\, f(X)^{\nu(f)}$ is an $X$-invariant complement. The restrictions
$X_U$ and $X_W$ of $X$ to $U$ and $W$ are both uncyclic. Moreover, $X$
determines a unique 4-tuple $(U,W,X_U,X_W)$. Counting the number of possible
4-tuples will give an upper bound for the number of $X$.

Set $k:=\dim(U)$. Then $k=d(f)\nu(f)\leq n/2$, and $k\geq 2$ as $X_U$
is uncyclic. The number of decompositions $V=U\oplus W$ with
$\dim(U)=k$ is
\[
  \frac{|\GL(n,q)|}{|\GL(k,q)|\,|\GL(n-k,q)|}.
\]
The number of choices for $X_U$ is precisely $r(k,q)$, and the number
of choices for $X_W$ is at most $\unc(n-k,q)$. (At this point the
reader may be concerned that we are not using the fact that the characteristic
polynomial of $X_W$ is coprime to $f$. It is remarkable that this otherwise
very delicate counting problem is essentially insensitive to such an
over-estimation.) Thus
\[
  \unc(n,q)\leq r(n,q)+\sum_{k=2}^{\lfloor\frac {n}{2}\rfloor}
            \frac{|\GL(n,q)|}{|\GL(k,q)|\,|\GL(n-k,q)|}\, r(k,q)\,\unc(n-k,q).
\]

We shall abbreviate $\rho(q)$, $c^*(q)$ and $\omega(\infty,q)$ by $\rho$,
$c^*$ and $\omega$, respectively.
As $n-k<n$, it follows by induction that
\[
  \unc(n-k,q)\leq c^*q^{(n-k)^2-(n-k)-1}\rho^{n-k}.
\]
Moreover, Lemma~\ref{L:rnq} gives
$r(k,q)=c_1(k,q)q^{k^2-k-1}\leq c^* q^{k^2-k-1}$ for all $k$ and, since
$\frac{\omega(n,q)}{\omega(n-k,q)}=\prod_{i=n-k+1}^n(1-q^{-i})<1$, we have
\[
  \frac{|\GL(n,q)|}{|\GL(k,q)|\,|\GL(n-k,q)|}=
  \frac{\omega(n,q)q^{n^2-k^2-(n-k)^2}}{\omega(k,q)\omega(n-k,q)}
  < \frac {q^{n^2-k^2-(n-k)^2}}{\omega(k,q)}.
\]
Thus
\begin{equation*}
  \unc(n,q)\leq c^*q^{n^2-n-1}+\sum_{k=2}^{\lfloor\frac {n}{2}\rfloor}
            \frac {q^{n^2-k^2-(n-k)^2}}{\omega(k,q)}
            c_1(k,q)q^{k^2-k-1}\,c^*q^{(n-k)^2-(n-k)-1}\rho^{n-k}.
\end{equation*}
The exponent of $q$ in the terms of the summation is independent of $k$ as
\[
  n^2-k^2-(n-k)^2+k^2-k-1+(n-k)^2-(n-k)-1=n^2-n-2.
\]
Therefore
\begin{equation}\label{estimate}
  \unc(n,q)\leq c^*q^{n^2-n-1}\left(1+
  \sum_{k=2}^{\lfloor\frac n2\rfloor} \frac{c_1(k,q)}{q\omega(k,q)}\rho^{n-k}
  \right).
\end{equation}
To complete the induction we must show that the above bracketed expression is
at most~$\rho^n$.
Towards this end, note that $\frac{c_1(k,q)}{\omega(k,q)} <\frac{c^*}{\omega}$
by Lemma~\ref{L:rnq}.
Since $\lfloor \frac n2\rfloor+\lceil \frac n2\rceil=n$, $n\geq4$ and
$\rho>1$, we have
\[
  \sum_{k=2}^{\lfloor\frac n2\rfloor} \rho^{n-k}=
  \rho^{\lceil \frac n2\rceil}+\rho^{\lceil \frac n2\rceil+1}
      +\cdots+\rho^{n-2}
    =\frac{\rho^{\lceil \frac n2\rceil}(\rho^{\lfloor\frac n2\rfloor-1}-1)}%
          {\rho-1}\leq\frac{\rho^{n-1}-\rho^2}{\rho-1}.
\]

It follows from the definition (\ref{E:ind}) of $\rho$, by rationalizing
the denominator, that
\[
  \frac{1}{\rho-1}=\frac{2}{-1+\sqrt{1+\frac{4c^*}{q\omega}}}=
  \frac{q\omega}{2c^*}
  \left[1+\sqrt{1+\frac{4c^*}{q\omega}}\;\right]=\frac{q\omega\rho}{c^*}.
\]
The previous three displayed equations now give
\[
  \unc(n,q)\leq c^*q^{n^2-n-1}\left(1+\frac {c^*}{q\omega}
  \frac{q\omega\rho}{c^*} (\rho^{n-1}-\rho^2)\right)=
   c^*q^{n^2-n-1}\left( 1+\rho^n-\rho^3\right).
\]
Since $\rho>1$, it follows that $1-\rho^3<0$. Thus
$\unc(n,q)< c^*q^{n^2-n-1}\rho^n$ and we have completed the inductive proof.

To complete the proof, we must estimate $\rho(q)$. By Lemma~\ref{L:sharp},
$c^*(3)<1.56$ and $\omega(\infty,3)<0.56$. Thus $\rho(3)<1.59$,
and the inequality for $\unc(n,3)$ follows.
Assume now that $q\geq4$. We will show that
\begin{equation}\label{E:rho1}
  \rho(q)=\frac{1+\sqrt{1+\frac{4c^*}{q\omega}}}{2}<1+q^{-1}+2q^{-2}.
\end{equation}
Multiplying (\ref{E:rho1})
by~2, subtracting~1, and squaring gives that (\ref{E:rho1}) is equivalent to
\begin{equation}\label{E:rho2}
  1+\frac{4c^*}{q\omega}<(1+2q^{-1}+4q^{-2})^2
  =1+4q^{-1}+12q^{-2}+16q^{-3}+16q^{-4}.
\end{equation}
Subtracting~1 from (\ref{E:rho2}) and multiplying by the positive quantity
$\frac{q\omega}{4}$ gives the equivalent inequality
\begin{equation}\label{E:rho3}
  c^*<\omega(1+3q^{-1}+4q^{-2}+4q^{-3}).
\end{equation}
By virtue of the inequalities $c^*<1+\frac{3}{2}q^{-1}+\frac{2}{3}q^{-2}$ 
from Lemma~\ref{L:rnq}, and $1-q^{-1}-q^{-2}<\omega$ from
Lemma~\ref{L:omega}, the inequality (\ref{E:rho3}), and hence 
also the required equivalent inequality (\ref{E:rho1}), will follow from a proof of 
the following stronger inequality:
\begin{equation}\label{E:rho4}
  1+\frac{3}{2}q^{-1}+\frac{2}{3}q^{-2}
  <(1-q^{-1}-q^{-2})(1+3q^{-1}+4q^{-2}+4q^{-3}).
\end{equation}
Expanding and rearranging~(\ref{E:rho4}) gives
\begin{equation}\label{E:rho5}
  0< \frac{q^{-1}}{2}-\frac{2q^{-2}}{3}-3q^{-3}-8q^{-4}-4q^{-5}.
\end{equation}
This inequality is true for $q\geq4$ by Lemma~\ref{L:positive}
with $q_0=4$. 
Thus (\ref{E:rho1}) holds for $q\geq4$. This completes the proof.
\end{proof}

\begin{corollary}
If $n\geq1$ and $q\geq3$, then the probability $p$ that a uniformly 
distributed random
$n\times n$ matrix over $\F_q$ is $f$-cyclic satisfies
\begin{equation}\label{E:estimate2}
  1-k(q)q^{-1}\left(q^{-1}+q^{-2}+\frac{12}{5}q^{-3}\right)^n<p\leq1
\end{equation}
where $k(q)=1+\frac{3}{2}q^{-1}+\frac{2}{3}q^{-2}$.
\end{corollary}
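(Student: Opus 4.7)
The plan is to convert the upper bounds on $\unc(n,q)$ supplied by Theorem~\ref{T:upperb} into the required lower bound on $p=1-\unc(n,q)/q^{n^2}$, handling $q\geq4$ and $q=3$ separately. The companion bound $p\leq1$ is immediate.

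For $q\geq4$, I would start from the inequality $\unc(n,q)<c^*(q)q^{n^2-n-1}(1+q^{-1}+2q^{-2})^n$ of Theorem~\ref{T:upperb}, divide by $q^{n^2}$, and absorb a factor $q^{-n}$ into the parentheses to rewrite it as
\[
  \frac{\unc(n,q)}{q^{n^2}}<c^*(q)\,q^{-1}\bigl(q^{-1}+q^{-2}+2q^{-3}\bigr)^n.
\]
Two elementary weakenings then close the case: Lemma~\ref{L:rnq} gives $c^*(q)<k(q)=1+\tfrac{3}{2}q^{-1}+\tfrac{2}{3}q^{-2}$, and trivially $2q^{-3}<\tfrac{12}{5}q^{-3}$. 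Substituting both yields exactly the bound claimed.

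For $q=3$, the separate clause of Theorem~\ref{T:upperb} yields
\[
  \frac{\unc(n,3)}{3^{n^2}}<1.56\cdot 3^{-n-1}(1.59)^n=\frac{1.56}{3}\Bigl(\frac{1.59}{3}\Bigr)^n.
\]
I would then verify two numerical inequalities: $k(3)=\tfrac{85}{54}>1.56$, and $3^{-1}+3^{-2}+\tfrac{12}{5}\cdot 3^{-3}=\tfrac{8}{15}>\tfrac{1.59}{3}=0.53$. Because both the constant in front and the base of the geometric on the target side are strictly larger than those just obtained, termwise comparison gives the required inequality for every $n\geq1$.

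There is essentially no obstacle: the corollary is a repackaging of Theorem~\ref{T:upperb}, and the only modest point is the choice of the constant $\tfrac{12}{5}$, which is precisely what allows a uniform formulation that also accommodates the $q=3$ bound. Once the two numerical inequalities at $q=3$ are verified, the statement follows directly.
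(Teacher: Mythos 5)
Your proposal is correct and follows essentially the same route as the paper: both cases are obtained by dividing the bounds of Theorem~\ref{T:upperb} by $q^{n^2}$, replacing $c^*(q)$ by $k(q)$ via Lemma~\ref{L:rnq} for $q\geq4$, and checking the two numerical comparisons $k(3)=\tfrac{85}{54}>1.56$ and $\tfrac{8}{15}>\tfrac{1.59}{3}$ for $q=3$. No issues.
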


\begin{proof}
Note that $p=1-\unc(n,q)q^{-n^2}\leq1$. Theorem~\ref{T:upperb} with $q=3$ gives
\[
  \frac{\unc(n,3)}{3^{n^2}}<\frac{1.56}{3}\left(\frac{1.59}{3}\right)^n
  <\frac{k(3)}{3}\left(3^{-1}+3^{-2}+\frac{12\cdot3^{-3}}{5}\right)^n.
\]
Thus the lower bound for $p$ in (\ref{E:estimate2}) holds for $q=3$.
Assume now that $q\geq4$. Since $c^*(q)<k(q)$ by Lemma~\ref{L:rnq},
it follows from Theorem~\ref{T:upperb} that
\[
  \frac{\unc(n,q)}{q^{n^2}}<k(q)q^{-1}\left(q^{-1}+q^{-2}+2q^{-3}\right)^n
  <k(q)q^{-1}\left(q^{-1}+q^{-2}+\frac{12}{5}q^{-3}\right)^n.
\]
This establishes the lower bound for $p$ in (\ref{E:estimate2}) for $q\geq4$,
and completes the proof.
\end{proof}

\section{An upper bound for $\unc(n,2)$}\label{S:2upperb}

Theorem~\ref{T:upperb} shows that $\unc(n,q)/q^{n^2}=\OO(R(q)^n)$, where
$R(q) = \rho(q)/q$ with $\rho(q)$ as defined in (\ref{E:ind}). 
For this value of $\rho(q)$, the proof of Theorem~\ref{T:upperb} yields an upper bound
for $\rho(q)$, and hence also for $R(q)=\rho(q)/q$, as listed in
Table~\ref{Tablerho}, for various values of $q$. (The values of 
these bounds have been rounded up to the
nearest $10^{-2}$.)
We note that the inductive part of the proof of 
Theorem~\ref{T:upperb} is valid
for $q=2$, but it gives an upper bound for $R(2)$ greater than $1$, 
or equivalently for $\rho(2)$ greater than $2$.
Stronger arguments are needed to show that $\unc(n,2)/2^{n^2}=\OO(R(2)^n)$
with $R(2)<1$.
If Conjecture~\ref{C} were true, then this would hold with
$R(2)\leq \frac{1}{2}+\frac{1}{2\times 2^2}=0.625$ (and 
hence with $\rho(2)=2R(2)\leq1.25$).
In this section we modify the proof of Theorem~\ref{T:upperb} to
obtain a value of $R(2)$ less than $0.983$, or $\rho(2)$ less than $1.966$,
which is still substantially larger than the bound conjectured to hold
in Conjecture~\ref{C}. Theorem~\ref{T:q=2} below implies Theorem~\ref{T:simple}.
\begin{table}[!ht]
  \begin{center}
  \begin{tabular}{|l|c|c|c|c|c|c|c|c|c|c|c|c|c|} \hline
    $q$&2&3&4&5&7&8&9&11&13&16&17&19&23\\ \hline
    $R(q)\leq$&1.18&0.53&0.35&0.26&0.15&0.17&0.13&0.11&0.09&0.07&0.07&0.06&0.05\\ \hline
    $\rho(q)\leq$&2.35&1.59&1.38&1.28&1.18&1.16&1.14&1.11&1.09&1.07&1.07&1.06&1.05\\ \hline
  \end{tabular}
\vskip2mm
  \caption{Upper bounds for $R(q)=\rho(q)/q$ and $\rho(q)$ obtained in Theorem~\ref{T:upperb}.}\label{Tablerho}
  \end{center}
\end{table}

\begin{theorem}\label{T:q=2}
If $n\geq1$, then
\quad$2^{n^2-n-1}(\frac{n}{4}+\frac{5}{8})<\unc(n,2)<(1.83)2^{n^2-n-1}(1.966)^n$.
\end{theorem}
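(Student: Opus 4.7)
The proof of Theorem~\ref{T:q=2} splits into two halves. The lower bound for $n\ge 3$ is immediate from Theorem~\ref{T:lowerb} by setting $q=2$: the expression $q^{n^2-n-1}\bigl(1+\tfrac{n-1}{2}q^{-1}-q^{-3}\bigr)$ simplifies to $2^{n^2-n-1}\bigl(\tfrac{n}{4}+\tfrac{5}{8}\bigr)$, as required. The small cases $n=1,2$ are checked directly against the values $\unc(1,2)=0$ and $\unc(2,2)=2$ from Table~\ref{TableUnc}.

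For the upper bound, I would adapt the inductive argument of Theorem~\ref{T:upperb} with constants sharpened for $q=2$. The obstruction to a direct application is that formula~(\ref{E:ind}) gives $\rho(2)\approx 2.35$, producing a trivial bound since $2.35>2$. To close the induction with the improved value $\rho=1.966$ one must establish the key inequality
\[
1+\sum_{k=2}^{\lfloor n/2\rfloor}\frac{c_1(k,2)}{2\,\omega(k,2)}\,\rho^{n-k}\le\rho^{n},
\]
derived exactly as in~(\ref{estimate}). Dividing through by $\rho^n$ and passing to the tail, this reduces to verifying the numerical bound
\[
\sum_{k\ge 2}\frac{c_1(k,2)}{2\,\omega(k,2)}\rho^{-k}<1 \qquad\text{with }\rho=1.966.
\]

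The plan for the tail estimate combines two ingredients. First, for small $k$ (say $k\le 6$), compute $c_1(k,2)=r(k,2)/2^{k^2-k-1}$ exactly using formula~(\ref{E:rnqd}), the values $N(d,2)$ for $d\mid k$, and the exact value $\omega(k,2)=\prod_{i=1}^k(1-2^{-i})$. These leading terms are substantially smaller than their uniform upper bounds would suggest (e.g.\ $c_1(2,2)=1$ and $c_1(3,2)=11/8$). Second, for $k>6$ apply the uniform bounds $c_1(k,2)\le c^*(2)<1.83$ from Lemma~\ref{L:sharp} and $\omega(k,2)\ge \omega(\infty,2)>0.288$, yielding a geometric tail whose sum has a closed form. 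Combining the two contributions produces a total strictly less than~$1$, closing the induction for all $n$ above some threshold $n_1$; the finitely many base cases $n\le n_1$ are then verified against the stored polynomial values of $\unc(n,2)$ from the database~\cite[Appendix~1]{http}.

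The main obstacle is the tightness of this numerical estimate. Using only the uniform bounds $c_1(k,2)\le 1.83$ and $\omega(k,2)\ge 0.288$ gives a geometric sum close to $1.05$, which exceeds~$1$. Because $\rho=1.966$ sits only marginally below~$2$, there is very little slack, so a genuine sharpening of the first several terms is essential; this is what makes the $q=2$ case substantially more delicate than the $q\ge 4$ case treated in Theorem~\ref{T:upperb}.
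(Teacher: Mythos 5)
Your upper-bound argument is essentially the paper's own proof: the same sharpened constant $\rho=1.966$, the same reduction to $\rho^{-n}+\sum_{k=2}^{\lfloor n/2\rfloor}\frac{c_1(k,2)}{2\omega(k,2)}\rho^{-k}\leq 1$, the same hybrid of exact values $c_1(k,2)$ for small $k$ (the paper uses $k\leq 5$, giving $1,\frac{11}{8},\frac{51}{32},\frac{6619}{4098}$) with the uniform bounds $c^*(2)<1.83$ and $\omega(\infty,2)>0.288$ for the tail, and the same use of Proposition~\ref{P:conjbound} for the base cases (the paper takes $n\leq 9$ and inducts from $n\geq 10$). One small precision point: after dividing by $\rho^n$ you should retain the term $\rho^{-n}\leq\rho^{-n_1}$ rather than only showing the sum is $<1$; the paper keeps $\rho^{-10}$ and lands at $0.9992<1$, so there is room, but your stated target inequality is formally not quite sufficient as written.

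There is, however, one concretely false step: your claim that the lower bound for $n=1,2$ "checks directly" against $\unc(1,2)=0$ and $\unc(2,2)=2$. It does not. For $n=1$ the bound is $2^{-1}\left(\frac14+\frac58\right)=\frac{7}{16}>0$, and for $n=2$ it is $2\left(\frac12+\frac58\right)=\frac94>2$, so the asserted strict inequality $2^{n^2-n-1}\left(\frac n4+\frac58\right)<\unc(n,2)$ fails in both cases. This defect is inherited from the theorem statement itself (the paper simply cites Theorem~\ref{T:lowerb}, which only applies for $n\geq 3$), but you cannot repair it by direct verification; the correct fix is to restrict the lower bound to $n\geq 3$.
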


\begin{proof} The lower bound follows from Theorem~\ref{T:lowerb}.
The upper bound is proved by adapting the inductive proof
of Theorem~\ref{T:upperb}.
By Proposition~\ref{P:conjbound} we know that $\unc(n,2)$ is at most
$2^{n^2-n-1}(1.25)^n$
for $n\leq9$ (indeed even for $n\leq37$), so the weaker bound
$\unc(n,2)<(1.83)2^{n^2-n-1}(1.966)^n$ certainly holds for $n\leq9$.
Assume henceforth that $n\geq10$. Lemma~\ref{L:sharp} shows that $c^*(2)$
as defined in Lemma~\ref{L:rnq} satisfies $c^*(2)<1.83$. Set $\rho:=1.966$.
The first part of the proof of Theorem~\ref{T:upperb} is valid for $q=2$,
and in particular,
the inequality (\ref{estimate}) holds for $q=2$. To complete the 
inductive step in the proof it is sufficient to prove, for $n\geq10$, that
\[
  1+\sum_{k=2}^{\lfloor\frac{n}{2}\rfloor}
    \frac{c_1(k,2)}{2\omega(k,2)}\rho^{n-k}\leq\rho^n
  \qquad\text{or equivalently,}\qquad
  \rho^{-n}+\sum_{k=2}^{\lfloor\frac{n}{2}\rfloor}
    \frac{c_1(k,2)}{2\omega(k,2)}\rho^{-k}\leq1
\]
with $c_1(k,2)$ as defined in Lemma~\ref{L:rnq}.
Since $\rho^{-n}\leq\rho^{-10}$ it is sufficient to prove that
\begin{equation}\label{E:one}
  \rho^{-10}+\sum_{k=2}^\infty\frac{c_1(k,2)}{2\omega(k,2)}\rho^{-k}\leq1.
\end{equation}
For $k\geq6$ we use the bounds from Lemmas~\ref{L:rnq} and~\ref{L:sharp} to obtain
\begin{equation}\label{E:approx}
  \frac{c_1(k,2)}{2\omega(k,2)}
    <\frac{c^*(2)}{2\omega(\infty,2)}< \frac{1.83}{2\times 0.28869}<3.17.
\end{equation}
For $k\leq5$ we use the exact values of $\frac{c_1(k,2)}{2\omega(k,2)}$.
Recall from the definitions of $c_0(k,q)$ and $c_1(k,q)$ in Lemmas~\ref{L:rnq1}
and~\ref{L:rnq} that $c_1(k,q)$ 
equals $c_0(k,q)$ when $k$ is prime.
Hence $c_1(k,2)$ equals $1,\frac{11}{8},\frac{6619}{4098}$ for $k=2,3,5$.
To compute $c_1(4,2)$, we use the proof of Lemma~\ref{L:rnq} to show
$r(4,2)=r(4,2,1)+r(4,2,2)=3152+112=3264$.
Thus $c_1(4,2)=r(4,2)/2^{11}=\frac{51}{32}$.
\vskip-1mm
\renewcommand{\arraystretch}{1.2}    
\def\ds{\displaystyle}\def\phan{\phantom{I^{I^I}}}
\begin{table}[ht!]
  \begin{center}
  \begin{tabular}{|l|c|c|c|c|} \hline
    $k$&2&3&4&5\\ \hline
    $\ds\frac{c_1(k,2)^{\phan}}{2\omega(k,2)}$&$\ds\frac{4}{3}$&$\ds\frac{44}{21}$
      &$\ds\frac{272}{105}$&$\ds\frac{26476}{9765}$\\ \hline
  \end{tabular}
\vskip2mm
  \caption{Values of $\frac{c_1(k,2)}{2\omega(k,2)}$ for $2\leq k\leq5$.}\label{Tableck2}
  \end{center}
\end{table}
\vskip-3mm
Using (\ref{E:approx}) and Table~\ref{Tableck2}, the infinite sum
in~(\ref{E:one}) is less than
\[
  \sum_{k=2}^5\frac{c_1(k,2)}{2\omega(k,2)}\rho^{-k}
  +\frac{c^*(2)}{2\omega(\infty,2)}\sum_{k=6}^\infty\rho^{-k}
  <\frac{4}{3}\rho^{-2}+\frac{44}{21}\rho^{-3}+\frac{272}{105}\rho^{-4}+
  \frac{26476}{9765}\rho^{-5}+\frac{3.17\rho^{-6}}{1-\rho^{-1}}.
\]
Evaluating the expression
\[
  \rho^{-10}+\frac{4}{3}\rho^{-2}+\frac{44}{21}\rho^{-3}
  +\frac{272}{105}\rho^{-4}+
  \frac{26476}{9765}\rho^{-5}+\frac{3.17\rho^{-6}}{1-\rho^{-1}}
\]
at $\rho=1.966$ gives the number $0.9992\cdots<1$. 
This completes the inductive proof.
\end{proof}

\section{Finding a witness to $X$ being $f$-cyclic}\label{S:witness}

In this section $h$ always denotes a {\it monic} irreducible polynomial.
Henceforth we shall consistently omit the adjective ``monic''.
The $h$-primary component $V(h)$ of an $\F_q[X]$-module $V$ can be
generalized to $V(g)$ where $g$ is a (possibly reducible) divisor of $c_X(t)$:
set $V(g):=\bigoplus_{h|g} V(h)$ where the sum is over irreducible divisors
$h$ of $g$.

The Holt-Rees \MeatAxe\ algorithm~\cite[Section 2]{HR} initially finds a random matrix $X$,
and then begins by executing the following steps:\newline
\begin{tabular}{l}
(1) find an irreducible
factor $g$ of the characteristic polynomial $c_X(t)$,\\ 
(2) evaluate $g(t)$ at $X$ to compute $Y=g(X)$, and\\ 
(3) find a non-zero vector $u\in\ker(Y)$.\\ 
\end{tabular}

The matrix $X$ can be used to prove
irreducibility if it is $f$-cyclic relative to $g$, 
that is, if (and only if) the degree of $g$ equals
$\dim(\ker(Y))$. Step~(2) has cost $\OO(\Mat(n)n)$ field operations\footnote{
A lower complexity can be achieved by conjugating $X$ into Frobenius
normal form, evaluating $g(t)$ at the matrix obtained, and conjugating back.
For the complexity of this approach
see: C. Pernet and A. Storjohann, Frobenius form in expected
matrix multiplication time over sufficiently large fields, preprint.}
(that is, additions, subtractions, multiplications, and inversions in $\F_q$), 
where $\Mat(n)$ is an upper bound for the number of field operations
required to multiply two matrices in $\M(n,q)$. 
The purpose of this section is to present a one-sided
Monte Carlo algorithm called \IsfCyclic\ that requires (only)
$\OO(\Mat(n)\log n)$
field operations, and in particular obviates the necessity of applying 
the rather expensive Step (2).

Given an $f$-cyclic matrix $X\in\M(n,q)$, and a positive real number $\eps<1$,
this algorithm returns {\sc True} with probability at least $1-\eps$.
Moreover in this case it constructs a divisor $g$ of $c_X(t)$ and a non-zero
vector $u$ such that $\gcd(g,c_X/g)=1$ and $V(g)=u\F_q[X]$. This shows that
$X$ is $f$-cyclic relative to every irreducible divisor of~$g$. 
If \IsfCyclic\ fails to construct $g,u$ with these properties then it returns {\sc False},
that is to say, \IsfCyclic\ incorrectly reports `$X$ is not $f$-cyclic'. However, the probability of this happening is at most $\eps$.
On the other hand, if $X$ is not $f$-cyclic, then \IsfCyclic\ correctly
returns {\sc False}. In summary, an output {\sc True} is always correct, 
while an output {\sc False} is incorrect with probability at most $\eps$.
These assertions are proved in Theorem~\ref{T:cost}.

If it were desirable that the polynomial $g$ returned by the algorithm
\IsfCyclic\ be irreducible, then \IsfCyclic\ could be modified to
incorporate a randomised polynomial factorisation algorithm.


\subsection{Witnesses and orders}\label{witness}

Given a matrix $X\in\M(n,q)$ and a non-constant divisor~$g$ of
$c_X(t)=\prod_ff^{\nu(f)}$, a vector $v\in V:=\F_q^{1\times n}$
is called a $g$-{\it witness} for $X$ if the cyclic submodule $v\F_q[X]$
contains the $h$-primary component $V(h)$ of $V$ for all irreducible
divisors $h$ of $g$. 
The following are equivalent:~(1) $v$ is a $g$-witness for $X$,
(2)~$V(g)\subseteq v\F_q[X]$, and (3)  $\prod_hh^{\nu(h)}$ divides the
order polynomial $\ord_X(v)$, where the product is over all irreducible 
divisors $h$ of $g$. (Recall that $a(t)=\ord_X(v)$ is the smallest degree
monic polynomial over $\F_q$ satisfying~$va(X)=0$.) 
As submodules of cyclic modules
are cyclic, \emph{$X$ has a $g$-witness $v$ if and only if $X$ is $f$-cyclic
relative to every irreducible divisor $h$ of $g$}.

It turns out that a
matrix $X$, which is $f$-cyclic relative to every irreducible divisor of $g$, 
has many $g$-witnesses,
and failure to find a $g$-witness (for any such $g$) provides
``probabilistic evidence'' that $X$ is uncyclic (as is shown below).

Recall the following notation from Section~\ref{S:conj}
\[
  \type(X)=\prod_h h^{\lambda(h)},\quad c_X(t)=\prod_h h^{|\lambda(h)|},\quad 
  m_X(t)=\prod_h h^{\lambda(h)_1},\quad V(h)=\ker h(X)^{\lambda(h)_1},
\]
and set $V(h)_k:=\ker h(X)^{\lambda(h)_1-k}$ for $0\leq k\leq\lambda(h)_1$.
The subspaces $V(h)_k$ define a chain
\begin{equation}\label{E:comp}
  V(h)=V(h)_0>V(h)_1>\cdots>V(h)_{\lambda(h)_1}=0
\end{equation}
of $\F_q[X]$-submodules. 

We introduce the notion of the {\it $h$-order} of a vector or polynomial,
see~\cite[7.17]{BAII}. Fix an irreducible polynomial $h(t)$, and let $I$ be the
ideal $h(t)\F_q[t]$ of $\F_q[t]$. A non-zero vector $v$ in an $\F_q[t]$-module $M$
is said to have $h$-order $k$, written $o_h(v)=k$, if $k$ is the largest
integer such that $v\in MI^k$. By convention we set $o_h(0):=\infty$. 
In our applications, the module $M$
will be either $V$, the $h$-primary component $V(h)$, or the ring $\F_q[t]$.
We denote elements of $V$ by $u,v$, and elements of
$\F_q[t]$ by $a,d,e,g$.
In the case when $M=\F_q[t]$, we have $\cap_{k\geq0}MI^k=0$, and
$o_h$ is an exponential valuation satisfying:
(i)~$o_h(a)=\infty$ if and only if $a=0$, 
(ii)~$o_h(ab)=o_h(a)+o_h(b)$, 
(iii)~$o_h(a+b)\geq\min\left(o_h(a),o_h(b)\right)$, and
(iv)~$o_h(\gcd(a,b))=\min\left(o_h(a),o_h(b)\right)$. When $M=V(h)$ properties
(i) and (iii) hold.

Suppose that $v\in V(h)$. Then $o_h(v)=k$ holds if $v\in V(h)_k$ and $k$
is maximal. If $v\ne 0$, then $o_h(\ord_X(v))$ $\leq\nu(h)-o_h(v)$, and 
$\dim_{\F_q}(V(h)/V(h)_k)\geq k\deg(h)$ for all $k\leq\lambda(h)_1$.
These inequalities become equalities when $X$ is $f$-cyclic relative to $h$. 
In the case that $X$ is $f$-cyclic relative to $h$,
then $V(h)$ is uniserial, and a uniformly distributed random vector
$v\in V(h)$ has $o_h(v)=k$ with probability
\[
  \frac{|V(h)_k| - |V(h)_{k+1}|}{|V(h)|}
  =q^{-k\deg(h)}-q^{-(k+1)\deg(h)}.
\]
Each vector $v\in V$ has a unique decomposition $v=\sum_h v_h$
where each $h$ is irreducible and $v_h$ belongs to the $h$-primary component $V(h)$ of $V$.
Thus, for a non-constant divisor $g$ of $c_X(t)$, $v$ is a $g$-witness if
and only if $v_h\not\in V(h)_1$ holds for each irreducible divisor $h$ of $g$,
or equivalently, $o_h(v)=0$ for each irreducible divisor $h$ of $g$.
This happens
with probability $\prod_{h|g}(1-q^{-\deg(h)})$, where the product is over all 
(monic) irreducible divisors $h$ of $g$.

\subsection{Is$f$Witness}\label{SS:IsfWitness}
The algorithm \IsfCyclic\ has input $(X,\eps)$, and makes repeated calls
to a deterministic subprogram \IsfWitness\ with input $(v,X,c_X(t))$, where $v$ is a
uniformly distributed random vector in $V=\F_q^{1\times n}$. 
Because $c_X(t)$ should be
calculated once, and not each time the subprogram \IsfWitness\ is invoked,
it is listed as an input parameter for \IsfWitness.
The algorithm \IsfWitness\ outputs \textsc{True} if $v$ is a $g$-witness
for $X$ for some non-constant divisor $g$ of $c_X(t)$, or \textsc{False} if $v$ is not
an $g$-witness for any non-constant divisor $g$ of $c_X(t)$. As the
\MeatAxe\ requires a useful certificate of $f$-cyclicity,
in the former case, \IsfWitness\ outputs a triple $(\textsc{True},u,a(t))$ where
$u\ne0$, $\ord_X(u)=a(t)$, $\gcd(a(t),c_X(t)/a(t))=1$, and
$u$ is an $a(t)$-witness.

The subprogram \IsfWitness\ introduces a vector $u$ and polynomials $a,d,g$ that 
are modified in the course of the algorithm. However, each time line~5 is
executed, the relations $u=vg(X)$, $a=\ord_X(u)$,
and $d=\gcd(a,c_X(t)/a)$ always hold, see Theorem~\ref{T:correct}(a). It is
useful to note that if $d$ divides $a=\ord_X(u)$, then $\ord_X(ud(X))=a/d$.

\noindent
{\bf Algorithm.} \IsfWitness\n
{\bf Input.} \hskip10mm a non-zero vector $v\in V$; $X\in\M(n,q)$; the characteristic polynomial $c_X(t)$\n
{\bf Output.} \hskip6mm $(\textsc{True},u,a(t))$, or \textsc{False}
\begin{enumerate}
\item[1.] $u:=v$; $g(t):=1$; \hskip20.7mm \text{\# $u=vg(X)$ always holds}
\item[2.] $a(t):=\ord_X(u)$; \hskip22mm \text{\# compute the order polynomial
  of $u$ under $X$}
\item[3.] $d(t):=\gcd(a(t),c_X(t)/a(t))$; \text{\# $d$ is always
  $\gcd(a,c_X(t)/a)$}
\item[4.] $i:=1$;
\item[5.] {\sc while} $i\leq \lfloor\log_2n\rfloor+2$ do
\item[6.] \hskip10mm if $d=1$ then return $(\textsc{True},u,a(t))$; fi;
\item[7.] \hskip10mm if $d=a$ then return \textsc{False}; fi; 
  \hskip7.1mm\text{\# henceforth $d\ne 1,a$ and $d$ divides $a$}
\item[8.] \hskip10mm $g:=g*d$; $u:=vg(X)$;\hskip22mm\text{\# $u:=ud(X)$
  is less efficient}
\item[9.] \hskip10mm $a:=a/d$; \hskip45.4mm\text{\# $a=\ord_X(u)=\ord_X(v)/g$
  always hold}
\item[10.] \hskip10mm $e:=\gcd(a,d)$; $d:=e*\gcd(a/e,e)$;
  \text{\# $d=\gcd(a,c_X(t)/a)$ always holds}
\item[11.] \hskip10mm $i:=i+1$; \hskip46mm\text{\# $i=$ number of times line~5 is executed}
\end{enumerate}

\begin{theorem}\label{T:correct}
Parts \textup{(a)--(e)} below prove the correctness of the
algorithm \IsfWitness. Let $c_X(t)=\prod_{h}h^{\nu(h)}$, 
where the product is over all (monic) irreducible 
divisors of $c_X(t)$. Suppose that line~$5$ is executed $s$ times, and
the values of $u, a(t), d(t)$ and $g(t)$
at the $i$th iteration of line~5 are
$u_i, a_i(t), d_i(t)$ and $g_i(t)$,
respectively. Also set $b_i:=c_X(t)/a_i(t)$.
\begin{enumerate}
\item[(a)] Then $u_i\ne 0$, $a_i=\ord_X(u_i)\neq 1$, $u_i=vg_i(X)$, and
$d_i=\gcd(a_i,b_i)$ for $1\leq i\leq s$.
\item[(b)] Set $k(h):=\nu(h)-o_h(\ord_X(v))$ for each irreducible divisor
$h$ of $c_X(t)$, and set
$r(h):=\left\lfloor\log_2\frac{\nu(h)}{k(h)}\right\rfloor+1$ when $k(h)>0$.
Then either
\begin{enumerate}
  \item[(i)] $k(h)=0$, and for $i\geq1$, $o_h(a_i)=\nu(h)$ and $o_h(d_i)=0$; or
  \item[(ii)]  $k(h)>0$, and $o_h(a_i)=o_h(d_i)=0$ for $i\geq r(h)+1$.
\end{enumerate}

\item[(c)] Set  $r:=0$ if $k(h)=0$ for all irreducibles $h$, 
and set $r:=\max\left\{r(h)\mid k(h)>0\right\}$ otherwise.
Then  $s\leq r+1\leq\lfloor\log_2 n\rfloor+2$. 
Also \IsfWitness\ returns $(\textsc{True},u_s,a_s)$ at line~$6$, or
{\sc False} at line~$7$. In either case, $\max\{1,r-1\}\leq s\leq r+1$ holds.

\item[(d)] \IsfWitness\ returns $\textsc{True}$ if and only if
$v$ is an $a$-witness for $X$ for some non-constant divisor $a$ of $c_X(t)$.

\item[(e)] \IsfWitness\ returns \textsc{False} if and only if
\,$o_h(\ord_X(v))<\nu(h)$ for each irreducible polynomial $h$ such that
$X$ is $f$-cyclic relative to $h$. In particular,
\IsfWitness\ returns \textsc{False} if $X$ is uncyclic.

\item[(f)] \IsfWitness\ requires $\OO(\Mat(n)\log n)$ field operations.
\end{enumerate}
\end{theorem}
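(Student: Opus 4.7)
The plan is to prove parts (a)--(f) roughly in order, since each relies on the preceding invariants.

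Part (a) is a straightforward induction on $i$. The base case ($i=1$) holds by inspection of lines 1--3. For the inductive step, line~8 gives $u_{i+1} = v g_{i+1}(X) = u_i d_i(X)$; since $d_i$ divides $a_i = \ord_X(u_i)$, the standard fact that $\ord_X(u_i d_i(X)) = a_i/d_i$ shows $a_{i+1} = \ord_X(u_{i+1})$. Because line~7 was not triggered, $d_i \neq a_i$, so $a_{i+1} \neq 1$ and $u_{i+1} \neq 0$. The subtle piece is verifying that line~10 correctly yields $\gcd(a_{i+1},b_{i+1})$; I would check this by comparing $h$-adic valuations at each irreducible $h$, using the identity $b_{i+1} = d_i b_i$.

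For part (b), I would track the $h$-adic valuation $\alpha_i := o_h(a_i)$. From (a), $\alpha_{i+1} = \alpha_i - \min(\alpha_i,\,\nu(h)-\alpha_i)$. In case~(i), $k(h) = 0$ gives $\alpha_1 = \nu(h)$, and induction preserves $\alpha_i = \nu(h)$ and $o_h(d_i) = 0$. In case~(ii), setting $k_i := \nu(h) - \alpha_i$, the recursion becomes $k_{i+1} = \min(2k_i, \nu(h))$, so $k_i$ doubles until it reaches $\nu(h)$; a short calculation shows $\alpha_i = 0$ by step $r(h)+1$. Part~(c) then follows: at iteration $r+1$ every irreducible $h \mid c_X$ has either $\alpha_{r+1} = \nu(h)$ (case~i) or $\alpha_{r+1} = 0$ (case~ii), so $a_{r+1}$ and $b_{r+1}$ are coprime, $d_{r+1} = 1$, and line~6 returns \textsc{True}. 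The bound $r+1 \leq \lfloor\log_2 n\rfloor + 2$ comes from $\nu(h) \leq n$, and the lower bound $s \geq \max\{1, r-1\}$ from the strict doubling regime of (b)(ii).

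Parts (d) and (e) are logical consequences. For (d): if \textsc{True} is returned, $d_s = 1$ means $\gcd(a_s, c_X/a_s) = 1$, so $a_s$ is a product of complete powers $h^{\nu(h)}$; since $u_s \in v\F_q[X]$ and $\ord_X(u_s) = a_s$, we have $V(h) \subseteq u_s\F_q[X] \subseteq v\F_q[X]$ for each $h \mid a_s$, making $v$ an $a_s$-witness. Conversely, if $v$ is an $a$-witness, then $o_h(\ord_X(v)) = \nu(h)$ for each $h \mid a$, so $k(h) = 0$, and (b) forbids the algorithm from ever reaching $d_i = a_i$. Part (e) follows by combining (d) with the equivalence ``$v$ is an $h^{\nu(h)}$-witness'' iff ``$X$ is $f$-cyclic relative to $h$ and $o_h(\ord_X(v)) = \nu(h)$''; when $X$ is uncyclic this set of $h$'s is empty and the condition holds vacuously.

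The main obstacle will be part (f). The loop runs $\BigO(\log n)$ times, so the target is $\BigO(\Mat(n))$ per iteration. Polynomial gcd and polynomial multiplication of degree at most $n$ cost $\BigO(n^2)$ and are easily absorbed. The delicate computations are $u := vg(X)$ and the order polynomial $\ord_X(u)$. For the former I would precompute the matrix powers $X, X^2, X^4, \dots, X^{2^{\lceil\log_2 n\rceil}}$ once in $\BigO(\Mat(n)\log n)$ total and then evaluate $vg(X)$ via a Paterson--Stockmeyer-style scheme at cost $\BigO(\Mat(n))$ per iteration. The order polynomial is the minimal polynomial of $X$ acting on the Krylov subspace generated by $u$, obtainable in $\BigO(\Mat(n))$ by standard cyclic-vector algorithms. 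Composing these subroutines cleanly to confirm the overall $\BigO(\Mat(n)\log n)$ bound is where I anticipate the bookkeeping to be most intricate.
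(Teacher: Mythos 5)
Your treatment of parts (a)--(e) follows essentially the same route as the paper: induction for the loop invariants in (a) (the paper verifies line~10 by a direct gcd manipulation using $\gcd(a_{i+1},b_i/d_i)=1$ and $\gcd(a_{i+1}/e,d_i/e)=1$, whereas you propose checking $h$-adic valuations -- an equivalent and equally valid verification), and in (b) your recursion $k_{i+1}=\min(2k_i,\nu(h))$ for $k_i:=\nu(h)-o_h(a_i)=o_h(b_i)$ is exactly the paper's recurrence for $o_h(b_i)$, with the same doubling analysis yielding (c)--(e). One small caution in (c): the algorithm may return \textsc{False} at line~7 at some iteration $i<r+1$ (when $d_i=a_i$), so ``line~6 returns \textsc{True} at iteration $r+1$'' should be read as conditional on reaching that iteration; this does not affect the bound $s\le r+1$.

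The genuine gap is in part (f), where your plan both over-engineers and mis-costs the two steps you identify as delicate. First, the order polynomial is computed exactly once, at line~2, at cost $\OO(\Mat(n)\log n)$; inside the loop $a$ is only ever updated by exact polynomial division ($a:=a/d$), which is justified by the identity $\ord_X(ud(X))=\ord_X(u)/d$ from part (a). So no per-iteration ``cyclic-vector algorithm'' is needed -- and your claimed $\OO(\Mat(n))$ bound for computing an order polynomial is not a standard result you can simply invoke. Second, for $u:=vg(X)$ the Paterson--Stockmeyer route does not obviously give $\OO(\Mat(n))$ per iteration for a vector-times-polynomial-of-matrix product (block-Horner with precomputed powers costs on the order of $\sqrt{n}$ vector-matrix products, i.e.\ $\OO(n^{2.5})$, which exceeds $\Mat(n)$ for fast matrix multiplication). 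The paper's observation is that the Krylov matrix $Y$ with rows $v,vX,\dots,vX^{n-1}$ is already available as a byproduct of the line-2 order computation, and then $vg_i(X)=(g_{i0},\dots,g_{i,n-1})Y$ is a single $\OO(n^2)$ vector-matrix product per iteration. With these two corrections the loop costs $\OO(n^2\log n)\le\OO(\Mat(n)\log n)$ and part (f) follows; as written, your complexity argument does not close.
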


\begin{proof}
(a) We use induction on~$i$. Part~(a) holds for $i=1$
by the definitions of $u, a, d, g$ in lines 1--3 of \IsfWitness. 
Suppose inductively that the claimed relations hold for $1\leq i<s$.
As $i<s$, \IsfWitness\ does not terminate at lines 6 or 7 on the $i$th
iteration, and
it follows that $d_i\ne1, a_i$. The new values of these variables assigned 
during the $i$th iteration of lines 8--10 are 
$g_{i+1}=g_i*d_i, u_{i+1}=vg_{i+1}(X), a_{i+1}=a_i/d_i$, and
$d_{i+1}=e*\gcd(a_{i+1}/e,e)$ where $e=\gcd(a_{i+1},d_i)$. 
Since
$\ord_X(u_{i+1})=\ord_X(vg_{i+1})=$ $\ord_X(u_id_i)=a_i/d_i\ne 1$, 
it follows that $u_{i+1}\ne 0$ and $a_{i+1}=\ord_X(u_{i+1})$.
By definition $b_i=c_X/a_i$, and hence
$b_{i+1}=c_X/a_{i+1}=b_i*a_i/a_{i+1}=b_i*d_i$. 
Finally, we must prove that
$d_{i+1}=\gcd(a_{i+1},b_{i+1})$. Now $d_i=\gcd(a_i,b_i)$ implies that
$\gcd(a_i/d_i,b_i/d_i)=1$, that is, $\gcd(a_{i+1},b_i/d_i)=1$.
Similarly $e=\gcd(a_{i+1},d_i)$ implies that $\gcd(a_{i+1}/e,d_i/e)=1$. To
complete the inductive proof of part (a) we show that $\gcd(a_{i+1},b_{i+1})$
is equal to $e*\gcd(a_{i+1}/e,e)$, which is $d_{i+1}$:
\begin{align*}
  \gcd(a_{i+1},b_{i+1})
  &=\gcd\left(a_{i+1},b_i*d_i\right)&&\mbox{(since}\ b_{i+1}=b_i*d_i)\\
  &=\gcd(a_{i+1},\frac{b_i}{d_i}*d_i^2)\\
  &=\gcd(a_{i+1},d_i^2)&&\mbox{(since}\ \gcd(a_{i+1},\frac{b_i}{d_i})=1)\\
  &=e*\gcd(\frac{a_{i+1}}{e},\frac{d_i}{e}*\frac{d_i}{e}*e)\\
  &=e*\gcd(\frac{a_{i+1}}{e},e)&&\mbox{(since}\ \gcd(\frac{a_{i+1}}{e},\frac{d_i}{e})=1).
\end{align*}
 
(b)  
Before proving part~(b) we shall prove (\ref{E:claim}), (\ref{E:sln})
and (\ref{E:ohd}) below.
Note that $o_h(c_X)=\nu(h)$ $=o_h(a_i)+o_h(b_i)$ for all $i$.
It follows from $k(h)=\nu(h)-o_h(\ord_X(v))$ and $a_1=\ord_X(v)$, that $o_h(b_1)=k(h)$. 
We first prove
\begin{equation}\label{E:claim}
  o_h(b_{i+1})=\left\lbrace \begin{array}{ll}
  0&\mbox{if $o_h(b_i)=0$,}\\
  2\,o_h(b_i)&\mbox{if $0<o_h(b_i)\leq\nu(h)/2$,}\\
  \nu(h)&\mbox{if $o_h(b_i)> \nu(h)/2$.}\\
                       \end{array}\right.
\end{equation}
Suppose first that $o_h(b_i)=0$. Then $o_h(d_i)=o_h(\gcd(a_i,b_i))=0$ and hence
$o_h(b_{i+1})$ equals $o_h(b_id_i)=o_h(b_i)=0$. This establishes the first
part of (\ref{E:claim}).
Next suppose that $0<o_h(b_i)\leq\nu(h)/2$. Then   $o_h(a_i)\geq o_h(b_i)$,
and so $o_h(d_i)=o_h(\gcd(a_i,b_i))=o_h(b_i)$, which implies that
$o_h(b_{i+1})=o_h(b_id_i)=2o_h(b_i)$. 
Finally, suppose that $o_h(b_i)> \nu(h)/2$. Then  $o_h(a_i)< o_h(b_i)$,  
and so $o_h(d_i)=o_h(\gcd(a_i,b_i))$ $=o_h(a_i)$, which implies that
$o_h(b_{i+1})=o_h(b_id_i)=o_h(b_i)+o_h(d_i)=o_h(b_i)+o_h(a_i)=\nu(h)$.
Thus (\ref{E:claim}) is proved. 

It is useful to solve the recurrence relation~(\ref{E:claim}). We next prove
that
\begin{equation}\label{E:sln}
  o_h(b_i)=\left\lbrace \begin{array}{ll}
  0&\mbox{if $k(h)=0$,}\\
  2^{i-1}k(h)&\mbox{if $k(h)>0$ and $1\leq i\leq r(h)$,}\\
  \nu(h)&\mbox{if $k(h)>0$ and $i>r(h)$.}\\
                       \end{array}\right. 
\end{equation}
Certainly if $k(h)=0$ then since $o_h(b_1)=k(h)$ (as we noted above), 
it follows that $o_h(b_1)=0$. By (\ref{E:claim}), we have $o_h(b_i)=0$
for all $i$. This establishes the first part of (\ref{E:sln}).
Suppose now that $k(h)>0$. We next prove (\ref{E:sln}) for 
$1\leq i\leq r(h)$ using induction on~$i$. The claim in (\ref{E:sln})
is true when $i=1$ as $o_h(b_1)=k(h)$. Suppose that $1\leq i< r(h)$ and 
$o_h(b_i)=2^{i-1}k(h)$. Then $i+1\leq r(h)$ and it 
follows from the definition of $r(h)$ that  $2^i k(h)\leq \nu(h)$,
and hence that $0<o_h(b_i)\leq \nu(h)/2$. Hence by  (\ref{E:claim})
we have $o_h(b_{i+1})=2^i k(h)$. Thus  (\ref{E:sln}) holds by induction
for $1\leq i\leq r(h)$. In particular $o_h(b_{r(h)})=2^{r(h)-1}k(h)$. 
Now by the definition of $r(h)$ we have $2^{r(h)}>\nu(h)/k(h)$, 
and hence $o_h(b_{r(h)})>\nu(h)/2$. Hence, by (\ref{E:claim}),
$o_h(b_{r(h)+1})=\nu(h)$, and by repeated applications of 
(\ref{E:claim}), $o_h(b_i)=\nu(h)$ for all $i>r(h)$.
Thus  (\ref{E:sln}) is proved.

Equation (\ref{E:sln}) may be used to compute $o_h(d_i)$.
In this paragraph we prove that
\begin{equation}\label{E:ohd}
  o_h(d_i)=\left\lbrace \begin{array}{ll}
  0&\mbox{if $k(h)=0$ or $i>r(h)$,}\\
  2^{i-1}k(h)&\mbox{if $k(h)>0$ and $1\leq i< r(h)$,}\\
  \nu(h)-2^{r(h)-1}k(h)&\mbox{if $k(h)>0$ and $i=r(h)$.}
                       \end{array}\right. 
\end{equation}
Part~(a) gives $o_h(d_i)=o_h(\gcd(a_i,b_i))=\min(o_h(a_i),o_h(b_i))$. Thus if
$o_h(b_i)$ equals 0 or $\nu(h)$, then $o_h(d_i)=0$. This establishes the
first part of (\ref{E:ohd}). Consider the second part, and assume that
$k(h)>0$ and $1\leq i< r(h)$. It follows from the previous paragraph that
$0<o_h(b_i)\leq \nu(h)/2$. Thus $o_h(d_i)=o_h(b_i)=2^{i-1}k(h)$ by (\ref{E:sln}).
Finally, suppose that $k(h)>0$ and $i=r(h)$. By the previous paragraph
$o_h(b_{r(h)})>\nu(h)/2$ and so
$o_h(d_{r(h)})=o_h(a_{r(h)})=\nu(h)-2^{r(h)-1}k(h)$ by~(\ref{E:claim}).
This proves (\ref{E:ohd}).

The proof of part~(b) is now simple. If $k(h)=0$, then
$o_h(b_1)=0$ and $o_h(a_1)=\nu(h)$ hold. Thus part~(i) follows from
(\ref{E:sln}) and (\ref{E:ohd}).
On the other hand, if $k(h)>0$, then (\ref{E:sln}) and (\ref{E:ohd}) imply
that $o_h(b_i)=\nu(h)$ and $o_h(d_i)=0$ for $i\geq r(h)+1$.
Thus part~(ii) holds.

(c)
We first prove that $s\leq r+1$. 
Note that $d_i=1$ is equivalent to $o_h(d_i)=0$ for all~$h$.
Suppose that the number, $s$, of times that line~5 is executed satisfies
$s\geq r+1$. Then it follows from part~(b) that $d_{r+1}=1$, and hence
that \IsfWitness\ terminates on executing line 6, and $s=r+1$.
Thus $s\leq r+1$. (Note that if $d_i=a_i$ for some $i<r+1$ then
\IsfWitness\ terminates at line~7, and $s<r+1$.)
Thus $s\leq r+1\leq \lfloor\log_2 n\rfloor+2$ where the last
inequality follows as $r=r(h)$ for some $h$ with $k(h)\geq1$ and $\nu(h)\leq n$.
This proves the second and third sentences of part~(c).
To prove the last sentence we must show that $r-1\leq s$
(as $1\leq s$ is clear). This is certainly true if $r\leq2$.
Suppose now that $r\geq3$. Fix an irreducible
polynomial~$h$ such that $r=r(h)$. Then $k(h)>0$. Showing that $s\not<r-1$
is equivalent to showing that \IsfWitness\ does not terminate during
iteration $i$ when $i<r-1$.
This is equivalent to proving $d_i\ne a_i$ and $d_i\ne1$ holds for $i<r-1$
which, in turn, is proved by showing $0<o_h(d_i)<o_h(a_i)$ for $i<r(h)-1$.
The inequalities $0<o_h(d_i)$ with $i<r(h)-1$ hold
by~(\ref{E:ohd}). It follows from (\ref{E:ohd}) and (\ref{E:sln}) that
$o_h(d_i)=2^{i-1}k(h)$ and $o_h(a_i)=\nu(h)-2^{i-1}k(h)$.
However, $i<r(h)-1$ implies
$i<\lfloor\log_2\frac{\nu(h)}{k(h)}\rfloor$,
which implies $2^ik(h)<\nu(h)$, and hence $o_h(d_i)<o_h(a_i)$.
Thus $r-1\leq s\leq r+1$ and part~(c) is proved. 

(d) Consider the forward implication. Suppose that \IsfWitness\ returns
$(\textsc{True},u_s,a_s)$. Then $d_s=1$, and by part (a), 
$\gcd(a_s,c_X/a_s)=1$ and $a_s\ne1$. Thus 
$a_s=\prod_{h|a_s}h^{\nu(h)}\ne 1$, and the Chinese Remainder Theorem gives
\[
  V(a_s):=\bigoplus_{h|a_s}V(h)=u_s\F_q[X]\subseteq v\F_q[X]
  \text{ as }V(a_s)\cong\F_q[t]/(a_s)\text{ and }V(h)\cong\F_q[t]/(h^{\nu(h)}).
\]
This proves that $v$ is an $a_s$-witness for $X$, and $X$ is $f$-cyclic relative
to each irreducible divisor $h$ of $a_s$. Now consider the reverse implication.
Suppose that $v$ is an $a$-witness for $X$ for some non-constant divisor 
$a$ of $c_X$. Then by the definition of an $a$-witness in Subsection~\ref{witness}, 
for each irreducible divisor $h$ of $a$, $V(h)\subseteq v\F_q[X]$, and 
it follows that $k(h)=o_h(b_1)=0$. Thus by
(\ref{E:sln}), $o_h(b_i)=0$ and $o_h(a_i)=\nu(h)>0$ for all $i$. Hence
$0=o_h(d_i)<o_h(a_i)$ for all $i$, and the conditional line~7
of \IsfWitness\ is never executed. It now follows from part~(c) that
\IsfWitness\ returns $\textsc{True}$. This proves part~(d). 

(e)~
Suppose that \IsfWitness\ returns {\sc False}, and 
let $h$ be an irreducible polynomial such that $X$ is $f$-cyclic relative to $h$. 
If $o_h(\ord_X(v))=\nu(h)$, then $k(h)=o_h(b_1)=0$, and the 
argument of the previous paragraph gives that \IsfWitness\ 
returns {\sc True}, which is a contradiction. Hence  
$o_h(\ord_X(v))<\nu(h)$. Conversely suppose that
$o_h(\ord_X(v))<\nu(h)$, for each irreducible 
polynomial $h$ such that $X$ is $f$-cyclic relative to $h$.
Then for each such $h$, $v$ is not an $h$-witness for $X$,
and it follows from the previous paragraph that
\IsfWitness\ does not return {\sc True}. Since 
\IsfWitness\ returns an answer by part (c), it must return {\sc False}.
This proves the first
sentence of part~(e). The second sentence is an immediate consequence of the first.


(f) The cost of multiplication, division,
or finding the greatest common divisor of two polynomials, each of 
degree at most $n$, is $\OO(n^2)$ field operations. 
As $c_X(t)$ is an input parameter to \IsfWitness,
line~3 has cost $\OO(n^2)$. 
Computing $\ord_X(v)$ in line~2 has cost
$\OO(\Mat(n)\log n)$ by \cite[Theorem~6.2.1(b)]{Ambrose}.
When computing $\ord_X(v)$, one uses ``fast spinning'' to calculate an
$n\times n$ matrix $Y$ with rows $v,vX,\dots,vX^{n-1}$.
We must remember $Y$ in order to compute, for each $i$, the vector $u_i$ in line~8.
If $g_i(t)=\sum_{j=0}^{n-1}g_{ij}t^j$, then $u_i=(g_{i0},g_{i1},\dots,g_{i,n-1})Y$.
Thus the cost of lines~8, 9, 10 in the $i$th iteration of the {\sc while} loop is $\OO(n^2)$.
By Theorem~\ref{T:correct}(c) the {\sc while} loop is executed at most $\lfloor\log_2n\rfloor +2$ times.
Thus the total cost of running the {\sc while} loop is $\OO(n^2\log n)$.  Since $\Mat(n)$ is at
least $\OO(n^2)$, it follows that \IsfWitness\ requires at most 
$\OO(\Mat(n)\log n)$ field operations.
\end{proof}

\begin{remarks}
(a) If we use standard algorithms for vector-matrix operations, then
an upper bound for the cost of \IsfWitness\ is $\OO(n^3)$. 
For example, at line $2$ 
the cost of finding $\ord_X(v)$ if one uses standard vector-matrix 
arithmetic is $\OO(n^3)$, (see for example,
\cite[Proposition 4.9]{NeunP}). Similarly, at line~$9$ 
we may replace $u:=vg(X)$ by $u:=ud(X)$. Using the notation of
Theorem~\ref{T:correct}, the sum of the degrees of the polynomials
$d_1,\dots,d_s$ is at most $n$. Hence the cost of computing $u_1,\dots,u_s$ is
at most $\OO(n^3)$.
The complexity bound $\OO(n^3)$ follows from these observations.

(b) The algorithm \IsfWitness\ may be varied as follows.
In essence \IsfWitness\ seeks a divisor $a$ of $\ord_X(v)$ of
\emph{maximal degree} satisfying $\gcd(a,c_X(t)/a)=1$.
Although it is straightforward to calculate $a$ from the
factorisation of $c_X(t)$ as a product of irreducibles, it is also possible
to calculate $a$ using only gcd's and $p$th roots where $p$ is the
characteristic of $\F_q$. We omit the precise details, but the computation
of square-free factorizations, see \cite[Algorithm~3.4.2]{Cohen}, plays an
important role.
\end{remarks}

\subsection{Algorithm Is$f$Cyclic}\hfill\newline
\noindent
{\bf Algorithm.} \IsfCyclic\n
{\bf Input.} \hskip10mm a (non-zero) matrix $X\in\M(n,q)$; a positive
real number $\eps<1$\n
{\bf Output.} \hskip6mm $(\textsc{True},u,a(t))$, or \textsc{False}
\begin{enumerate}
\item[1.] $m:=\left\lceil\frac{\log(\eps^{-1})}{\log q}\right\rceil;$ \# $m$ is
  the maximum number of random vectors tested
\item[2.] $c:=c_X(t);$\hskip10.9mm\# compute the characteristic polynomial of $X$
\item[3.] $i:=1;$\hskip18.4mm\# $i$ counts the number of random vectors chosen
\item[4.] {\sc while} $i\leq m$ do
\item[5.] \hskip10mm $v:=$ a (uniformly) random vector in $\F_q^{1\times n}$; if $v=0$ then continue; fi;
\item[6.] \hskip10mm  $\text{output}:=\IsfWitness(v,X,c)$;
\item[7.] \hskip10mm if $\text{output}\ne\textsc{False}$ then return $\text{output}$; fi; $i:=i+1$;
\item[8.] return \textsc{False};\hskip4.5mm\# probability of failure given that
$X$ is $f$-cyclic is at most $\eps$
\end{enumerate}

Recall that, for an $f$-cyclic matrix $X\in\M(n,q)$ and a non-constant divisor 
$a$ of $c_X$, a vector $v\in\F_q^n$ is an $a$-witness for $X$ if
$v\F_q[X]$ contains $V(a)$.

\begin{theorem}\label{T:cost}
\IsfCyclic\ is a one-sided Monte Carlo algorithm for which, a given  
matrix $X\in\M(n,q)$ and positive real number $\eps<1$, the following hold.
\begin{enumerate}
 \item[(a)] If $X$ is $f$-cyclic, then \IsfCyclic\ returns
$(\textsc{True},u,a)$ with probability at least $1-\eps$, where $a$ is 
a non-constant divisor of $c_X$, and $u$ is an $a$-witness for $X$.
\item[(b)] If $X$ is uncyclic, then \IsfCyclic\  returns {\sc False} with probability $1$.
\end{enumerate}
The number of field operations required
by \IsfCyclic\ is 
$\OO(\frac{\log(\eps^{-1})}{\log q}(\xi_{q,n}+\Mat(n)\log n))$, where
$\xi_{q,n}$ is 
an upper bound for the cost of constructing a uniformly distributed random 
vector in $\F_q^n$. 
\end{theorem}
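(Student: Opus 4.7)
My plan is to handle correctness and complexity separately, leaning heavily on Theorem~\ref{T:correct}, which already analyses the subroutine \IsfWitness.

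Part~(b) is immediate: if $X$ is uncyclic, then by Theorem~\ref{T:correct}(e) every call \IsfWitness$(v,X,c)$ returns \textsc{False}, so the \textsc{while} loop of \IsfCyclic\ runs to completion and the algorithm returns \textsc{False} deterministically.

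For part~(a) suppose $X$ is $f$-cyclic. Then there is some irreducible divisor $h$ of $c_X(t)$ with respect to which $X$ is $f$-cyclic, and for such an $h$ the probability computation at the end of Subsection~\ref{witness} gives that a uniformly distributed $v \in \F_q^n$ is an $h$-witness with probability $1 - q^{-\deg(h)} \geq 1 - q^{-1}$. Whenever this happens, Theorem~\ref{T:correct}(d) guarantees that \IsfWitness$(v,X,c)$ returns some triple $(\textsc{True},u,a)$, and Theorem~\ref{T:correct}(a) then tells us that $a = \ord_X(u)$ and $\gcd(a,c_X/a) = 1$, so $a$ has the form $\prod_{h'\mid a}(h')^{\nu(h')}$. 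It follows that $u \in \ker a(X) = V(a)$, hence $u\F_q[X] \subseteq V(a)$; a dimension count $\dim u\F_q[X] = \deg a = \sum_{h'\mid a}\nu(h')\deg(h') = \dim V(a)$ forces equality $u\F_q[X] = V(a)$, and so $u$ is an $a$-witness for $X$. The $m$ vectors are sampled independently, so the probability that every trial fails is at most $q^{-m}$, which is at most $\eps$ by the choice $m = \lceil \log(\eps^{-1})/\log q\rceil$ in line~1.

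For the complexity, the characteristic polynomial $c_X(t)$ in line~2 is computed in $\OO(\Mat(n)\log n)$ field operations; each of the at most $m$ iterations of the loop constructs a random vector at cost $\OO(\xi_{q,n})$ and invokes \IsfWitness\ at cost $\OO(\Mat(n)\log n)$ by Theorem~\ref{T:correct}(f); and $m = \OO(\log(\eps^{-1})/\log q)$. Summing yields the stated bound. The only place where a little care is required is checking in part~(a) that the returned $u$ really is an $a$-witness, rather than merely a vector certifying \textsc{True}; the dimension argument above, combined with the invariants maintained in Theorem~\ref{T:correct}(a), handles this cleanly, and I do not foresee any other substantive obstacles.
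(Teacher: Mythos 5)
Your proposal is correct and follows essentially the same route as the paper: part (b) from Theorem~\ref{T:correct}(e), part (a) from the witness probability $\prod_{h\mid g}(1-q^{-\deg(h)})$ together with Theorem~\ref{T:correct}(d) and independence of the $m$ trials, and the cost from Theorem~\ref{T:correct}(f) plus the characteristic polynomial computation. Your extra dimension-count verification that $u$ is an $a$-witness merely re-derives what the proof of Theorem~\ref{T:correct}(d) already establishes via the Chinese Remainder Theorem, so it is harmless redundancy rather than a different approach.
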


\begin{proof}
For $m:=\left\lceil\frac{\log(\eps^{-1})}{\log q}\right\rceil$, we have
$q^m\geq\eps^{-1}$.
Let $X$ be an $f$-cyclic matrix relative to at least one irreducible, say $h$. 
Suppose that \IsfCyclic\ returns {\sc False}.
Then \IsfWitness\ returns {\sc False} for
$m$~independent uniformly distributed random vectors of~$V$.
By the remarks preceding Subsection~\ref{SS:IsfWitness}, this
happens with probability at most
$q^{-m\deg(h)}\leq q^{-m}\leq \eps$, since $\deg(h)\geq1$.
If \IsfCyclic\ does not return {\sc False}, then at least one of the
runs of \IsfWitness\ has output ({\sc True}, $u,a)$, and this is then
returned by \IsfCyclic\ at line~7. This proves part~(a).

Now suppose that \IsfCyclic\ is has an uncyclic matrix $X$ 
as input. Then
by Theorem~\ref{T:correct}(e), each run of \IsfWitness\ returns {\sc False}, 
and hence \IsfCyclic\ returns {\sc False}. This proves part~(b).

The only situation in which the output of \IsfCyclic\ is incorrect is if
the input matrix $X$ is $f$-cyclic and  \IsfCyclic\  returns {\sc False}.
We have shown that this probability of this happening, given that $X$ 
is $f$-cyclic, at most than $\eps$. Thus \IsfCyclic\ is a one-sided 
Monte Carlo algorithm.

Finally, we estimate the cost.
Computing $c_X(t)$ in line~2
of \IsfCyclic\ requires at most $\OO(\Mat(n)\log n)$ field operations, see \cite{Ambrose}.
By Theorem~\ref{T:correct}(f), the total cost of $m$ iterations of the
{\sc while} loop of \IsfCyclic\ is $\OO(m\Mat(n)\log n)$ plus the cost of constructing
$m$ uniformly distributed random vectors from $\F_q^n$.
\end{proof}

\section*{Acknowledgements}

The second author acknowledges support of an Australian Research Council
Federation Fellowship.
Both authors acknowledge the ARC Discovery Project grant
DP0879134 which supported the first author's visit to The University
of Western Australia.

\end{document}